\newcommand{\globalcolor}[1]{%
  \color{#1}\global\let\default@color\current@color
}
\definecolor{blush}{rgb}{0.87, 0.36, 0.51}
	\definecolor{brightcerulean}{rgb}{0.11, 0.67, 0.84}
	\definecolor{greenryb}{rgb}{0.4, 0.69, 0.2}
\newif\ifdark
\definecolor{darkred}{rgb}{0.9,0.2,0.2}
\definecolor{darkblue}{rgb}{0.7,0.3,1}
\definecolor{darkgreen}{rgb}{0.1,0.9,0.1}
\definecolor{franck}{rgb}{0,0.8,1}
\definecolor{pagebackground}{rgb}{.15,.21,.18}
\definecolor{pageforeground}{rgb}{.84,.84,.85}
\definecolor{symbols}{rgb}{0,0.7,1}
\colorlet{connection}{red!80!black}
\colorlet{boxcolor}{blue!50}
\definecolor{darkred}{rgb}{0.7,0.1,0.1}
\definecolor{darkblue}{rgb}{0.4,0.1,0.8}
\definecolor{darkgreen}{rgb}{0.1,0.7,0.1}
\definecolor{franck}{rgb}{0,0,1}
\definecolor{pagebackground}{rgb}{1,1,1}
\definecolor{pageforeground}{rgb}{0,0,0}
\colorlet{symbols}{blue!90!black}
\colorlet{connection}{red!30!black}
\colorlet{boxcolor}{blue!50!black}
\def\slash{\leavevmode\unskip\kern0.18em/\penalty\exhyphenpenalty\kern0.18em}
\def\dash{\leavevmode\unskip\kern0.18em--\penalty\exhyphenpenalty\kern0.18em}
\DeclareMathAlphabet{\mathbbm}{U}{bbm}{m}{n}
\DeclareFontFamily{U}{BOONDOX-calo}{\skewchar\font=45 }
\DeclareFontShape{U}{BOONDOX-calo}{m}{n}{
  <-> s*[1.05] BOONDOX-r-calo}{}
\DeclareFontShape{U}{BOONDOX-calo}{b}{n}{
  <-> s*[1.05] BOONDOX-b-calo}{}
\DeclareMathAlphabet{\mcb}{U}{BOONDOX-calo}{m}{n}
\SetMathAlphabet{\mcb}{bold}{U}{BOONDOX-calo}{b}{n}
\setlist{noitemsep,topsep=4pt,leftmargin=1.5em}
\DeclareMathAlphabet{\mathbbm}{U}{bbm}{m}{n}
\DeclareMathAlphabet{\mcb}{U}{BOONDOX-calo}{m}{n}
\SetMathAlphabet{\mcb}{bold}{U}{BOONDOX-calo}{b}{n}
\DeclareFontFamily{U}{mathx}{\hyphenchar\font45}
\DeclareFontShape{U}{mathx}{m}{n}{
      <5> <6> <7> <8> <9> <10>
      <10.95> <12> <14.4> <17.28> <20.74> <24.88>
      mathx10
      }{}
\DeclareSymbolFont{mathx}{U}{mathx}{m}{n}
\DeclareMathSymbol{\bigtimes}{1}{mathx}{"91}
\def\s{\mathfrak{s}}
\def\hattimes{\mathbin{\hat\otimes}}
\providecommand{\figures}{false}
{ \ifthenelse{\equal{\figures}{false}} {#1}{\[ {\rm Figure \ missing !} \]} }{}
\def\id{\mathrm{id}}
\def\CH{\mathcal{H}}
\def\CP{\mathcal{P}}
\def\CT{\mathcal{T}}
\tikzstyle{tinydots}=[dash pattern=on \pgflinewidth off \pgflinewidth]
\tikzstyle{superdense}=[dash pattern=on 4pt off 1pt]
\newcommand{\beq}{\begin{equation}}
\newcommand{\eeq}{\end{equation}}
\newcommand{\mfT}{\mathfrak{T}}
\newcommand{\mfl}{\mathfrak{l}}
\def\Labe{\mathfrak{e}}
\def\Labn{\mathfrak{n}}
\def\Labhom{\mathfrak{t}}
\def\Lab{\mathfrak{L}}
\def\${|\!|\!|}
\newenvironment{DIFnomarkup}{}{} 
\newcommand{\rrightarrow}{{\to\hskip -4.9mm\raise 1pt\hbox{$\to$}}}
\newfont{\indic}{bbmss12}
\def\Nabla_#1{\nabla_{\!#1}}
    \pgfmathsetlength{\pgf@xb}{\pgfkeysvalueof{/pgf/outer xsep}}%
    \pgfmathsetlength{\pgf@yb}{\pgfkeysvalueof{/pgf/outer ysep}}%
\def\symbol#1{\textcolor{symbols}{#1}}
\def\decorate#1#2{
        \ifnum#2>0
    		\foreach \count in {1,...,#2}{
	       	let
				\p1 = (sourcenode.center),
                \p2 = (sourcenode.east),
				\n1 = {\x2-\x1},
				\n2 = {1mm},
				\n3 = {(1.3+0.6*(\count-1))*\n1},
				\n4 = {0.7*\n1}
			in 
        		node[rectangle,fill=symbols,rotate=30,inner sep=0pt,minimum width=0.2*\n2,minimum height=\n2] at ($(sourcenode.center) + (\n3,\n4)$) {}
				}
		\fi
        \ifnum#1>0
    		\foreach \count in {1,...,#1}{
	       	let
				\p1 = (sourcenode.center),
                \p2 = (sourcenode.east),
				\n1 = {\x2-\x1},
				\n2 = {1mm},
				\n3 = {(1.3+0.6*(\count-1))*\n1},
				\n4 = {0.7*\n1}
			in 
        		node[rectangle,fill=symbols,rotate=-30,inner sep=0pt,minimum width=0.2*\n2,minimum height=\n2] at ($(sourcenode.center) + (-\n3,\n4)$) {}
				}
		\fi
}
\tikzset{
    dectriangle/.style 2 args={
        triangle,
        alias=sourcenode,
        append after command={\decorate{#1}{#2}}
    },
    dectriangle/.default={0}{0},
}
\tikzset{
	cross/.style={path picture={ 
  		\draw[symbols]
			(path picture bounding box.south east) -- (path picture bounding box.north west) (path picture bounding box.south west) -- (path picture bounding box.north east);
		}},
root/.style={circle,fill=green!50!black,inner sep=0pt, minimum size=1.2mm},
        dot/.style={circle,fill=pageforeground,inner sep=0pt, minimum size=1mm},
        dotred/.style={circle,fill=pageforeground!50!pagebackground,inner sep=0pt, minimum size=2mm},
        var/.style={circle,fill=pageforeground!10!pagebackground,draw=pageforeground,inner sep=0pt, minimum size=3mm},
        kernel/.style={semithick,shorten >=2pt,shorten <=2pt},
        kernels/.style={snake=zigzag,shorten >=2pt,shorten <=2pt,segment amplitude=1pt,segment length=4pt,line before snake=2pt,line after snake=5pt,},
        rho/.style={densely dashed,semithick,shorten >=2pt,shorten <=2pt},
           testfcn/.style={dotted,semithick,shorten >=2pt,shorten <=2pt},
        renorm/.style={shape=circle,fill=pagebackground,inner sep=1pt},
        labl/.style={shape=rectangle,fill=pagebackground,inner sep=1pt},
        xic/.style={very thin,circle,draw=symbols,fill=symbols,inner sep=0pt,minimum size=1.2mm},
        g/.style={very thin,rectangle,draw=symbols,fill=symbols!10!pagebackground,inner sep=0pt,minimum width=2.5mm,minimum height=1.2mm},
        xi/.style={very thin,circle,draw=symbols,fill=symbols!10!pagebackground,inner sep=0pt,minimum size=1.2mm},
	xies/.style={very thin,rectangle,fill=green!50!black!25,draw=symbols,inner sep=0pt,minimum size=1.1mm},
	xiesf/.style={very thin,rectangle,fill=green!50!black,draw=symbols,inner sep=0pt,minimum size=1.1mm},
        xix/.style={very thin,crosscircle,fill=symbols!10!pagebackground,draw=symbols,inner sep=0pt,minimum size=1.2mm},
        X/.style={very thin,cross,rectangle,fill=pagebackground,draw=symbols,inner sep=0pt,minimum size=1.2mm},
	xib/.style={thin,circle,fill=symbols!10!pagebackground,draw=symbols,inner sep=0pt,minimum size=1.6mm},
	xie/.style={thin,circle,fill=green!50!black,draw=symbols,inner sep=0pt,minimum size=1.6mm},
	xid/.style={thin,circle,fill=symbols,draw=symbols,inner sep=0pt,minimum size=1.6mm},
	xibx/.style={thin,crosscircle,fill=symbols!10!pagebackground,draw=symbols,inner sep=0pt,minimum size=1.6mm},
	kernels2/.style={very thick,draw=connection,segment length=12pt},
	keps/.style={thin,draw=symbols,->},
	kepspr/.style={thick,draw=connection,->},
	krho/.style={thin,draw=symbols,superdense,->},
	krhopr/.style={thick,draw=connection,superdense},
	triangle/.style = { regular polygon, regular polygon sides=3},
	not/.style={thin,circle,draw=connection,fill=connection,inner sep=0pt,minimum size=0.5mm},
	diff/.style = {very thin,draw=symbols,triangle,fill=red!50!black,inner sep=0pt,minimum size=1.6mm},
	diff1/.style = {very thin,dectriangle={1}{0},fill=red!50!black,draw=symbols,inner sep=0pt,minimum size=1.6mm},
	diff2/.style = {very thin,dectriangle={1}{1},fill=red!50!black,draw=symbols,inner sep=0pt,minimum size=1.6mm},
		diffmini/.style = {very thin,rectangle,fill=black,draw=black,inner sep=0pt,minimum size=0.75mm},
	 kernelsmod/.style={very thick,draw=connection,segment length=12pt},
	 rec/.style = {very thin,rectangle,fill=black,draw=black,inner sep=0pt,minimum size=2mm},
	cerc/.style={very thin,circle,draw=black,fill=symbols,inner sep=0pt,minimum size=2mm},
	stars/.style={very thin,star,star points=6,star point ratio=0.5, draw=black,fill=red,inner sep=0pt,minimum size=0.7mm},
	>=stealth,
        }
        \tikzset{
root/.style={circle,fill=black!50,inner sep=0pt, minimum size=3mm},
        circ/.style={circle,fill=white,draw=black,very thin,inner sep=.5pt, minimum size=1.2mm},
        round1/.style={fill=white,outer sep = 0,inner sep=2pt,rounded corners=1mm,draw,text=black,thin,minimum size=1.2mm},
          circ1/.style={circle,fill=red!10,draw=red,very thin,inner sep=.5pt, minimum size=1.2mm},
        rect/.style={fill=white,outer sep = 0,inner sep=2pt,rectangle,draw,text=black,thin,minimum size=1.2mm},
        rect1/.style={fill=white,outer sep = 0,inner sep=2pt,rectangle,draw,text=black,thin,minimum size=1.2mm},
        round2/.style={fill=red!10,outer sep = 0,inner sep=2pt,rounded corners=1mm,draw,text=black,thin,minimum size=1.2mm},
       round3/.style={fill=blue!10,outer sep = 0,inner sep=2pt,rounded corners=1mm,draw,text=black,thin,minimum size=1.2mm}, 
        rect2/.style={fill=black!10,outer sep = 0,inner sep=2pt,rectangle,draw,text=black,thin,minimum size=1.2mm},
        dot/.style={circle,fill=black,inner sep=0pt, minimum size=1.2mm},
        dotred/.style={circle,fill=black!50,inner sep=0pt, minimum size=2mm},
        var/.style={circle,fill=black!10,draw=black,inner sep=0pt, minimum size=3mm},
        kernel/.style={semithick,shorten >=2pt,shorten <=2pt},
         diag/.style={thin,shorten >=4pt,shorten <=4pt},
        kernel1/.style={thick},
        kernels/.style={snake=zigzag,shorten >=2pt,shorten <=2pt,segment amplitude=1pt,segment length=4pt,line before snake=2pt,line after snake=5pt,},
		kernels1/.style={snake=zigzag,segment amplitude=0.5pt,segment length=2pt},
		rho1/.style={densely dotted,semithick},
        rho/.style={densely dashed,semithick,shorten >=2pt,shorten <=2pt},
           testfcn/.style={dotted,semithick,shorten >=2pt,shorten <=2pt},
           visible/.style={draw, circle, fill, inner sep=0.25ex},
        renorm/.style={shape=circle,fill=white,inner sep=1pt},
        labl/.style={shape=rectangle,fill=white,inner sep=1pt},
        xic/.style={very thin,circle,fill=symbols,draw=black,inner sep=0pt,minimum size=1.2mm},
        xi/.style={very thin,circle,fill=blue!10,draw=black,inner sep=0pt,minimum size=1.2mm},
	xib/.style={very thin,circle,fill=blue!10,draw=black,inner sep=0pt,minimum size=1.6mm},
	xie/.style={very thin,circle,fill=green!50!black,draw=black,inner sep=0pt,minimum size=1mm},
	xid/.style={very thin,circle,fill=symbols,draw=black,inner sep=0pt,minimum size=1.6mm},
	edgetype/.style={very thin,circle,draw=black,inner sep=0pt,minimum size=5mm},
	nodetype/.style={very thick,circle,draw=black,inner sep=0pt,minimum size=5mm},
	kernels2/.style={very thick,draw=connection,segment length=12pt},
clean/.style={thin,circle,fill=black,inner sep=0pt,minimum size=1mm},	not/.style={thin,circle,fill=symbols,draw=connection,fill=connection,inner sep=0pt,minimum size=0.8mm},
	>=stealth,
        }
\def\DeclareSymbol#1#2#3{%
	\expandafter\gdef\csname MH@symb@#1\endcsname{\tikzsetnextfilename{symbol#1}%
	\tikz[baseline=#2,scale=0.15,draw=symbols,line join=round]{#3}}%
	\expandafter\gdef\csname MH@symb@#1s\endcsname{\scalebox{0.75}{\tikzsetnextfilename{symbol#1}%
	\tikz[baseline=#2,scale=0.15,draw=symbols,line join=round]{#3}}}%
	\expandafter\gdef\csname MH@symb@#1ss\endcsname{\scalebox{0.65}{\tikzsetnextfilename{symbol#1}%
	\tikz[baseline=#2,scale=0.15,draw=symbols,line join=round]{#3}}}%
	}
\def\<#1>{\ifthenelse{\boolean{mmode}}{\mathchoice{\csname MH@symb@#1\endcsname}{\csname MH@symb@#1\endcsname}{\csname MH@symb@#1s\endcsname}{\csname MH@symb@#1ss\endcsname}}{\csname MH@symb@#1\endcsname}}
 \def\1{\mathbf{\symbol{1}}}
\def\one{\mathbf{1}}
\DeclareMathAlphabet{\mathpzc}{OT1}{pzc}{m}{it}
\def\eqref#1{(\ref{#1})}
\newcommand*{\bigcdot}{}
\DeclareRobustCommand*{\bigcdot}{%
  \mathbin{\mathpalette\bigcdot@{}}%
}
\newcommand*{\bigcdot@scalefactor}{.5}
\newcommand*{\bigcdot@widthfactor}{1.15}
\newcommand*{\bigcdot@}[2]{%
  \sbox0{$#1\vcenter{}$}
  \sbox2{$#1\cdot\m@th$}%
  \hbox to \bigcdot@widthfactor\wd2{%
    \hfil
    \raise\ht0\hbox{%
      \scalebox{\bigcdot@scalefactor}{%
        \lower\ht0\hbox{$#1\bullet\m@th$}%
      }%
    }%
    \hfil
  }%
}
\def\two{{\<generic>\kern0.05em\<genericb>}}
\def\twoI{{\<Ito>\kern0.05em\<Itob>}}
\def\st{\mathsf{fgt}}
\def\mail#1{\burlalt{#1}{mailto:#1}}
\begin{document}

\def\st{\mathsf{fgt}}
\def\mail#1{\burlalt{#1}{mailto:#1}}
\title{Composition and substitution of Regularity Structures B-series}
\author{Yvain Bruned$^1$}
\institute{ 
 IECL (UMR 7502), Université de Lorraine
 \\
Email:\ \begin{minipage}[t]{\linewidth}
\mail{yvain.bruned@univ-lorraine.fr}
\end{minipage}}
\def\dsqcup{\sqcup\mathchoice{\mkern-7mu}{\mkern-7mu}{\mkern-3.2mu}{\mkern-3.8mu}\sqcup}

\maketitle

\begin{abstract}

\ \ \ \ In this work, we introduce Regularity Structures B-series which are used for describing solutions of singular stochastic partial differential equations (SPDEs). We define composition and substitutions of these B-series and as in the context of B-series for ordinary differential equations, these operations can be rewritten via products and Hopf algebras which have been used for building up renormalised models. These models provide a suitable topology for solving singular SPDEs.
This new construction sheds new light on these products and opens interesting perspectives for the study of singular SPDEs in connection with B-series.
\end{abstract}

\setcounter{tocdepth}{1}
\tableofcontents

\section{ Introduction }

Numerical methods like Runge-Kutta methods for ordinary differential equations (ODE) of the type
\begin{equs}
\partial_t y = F(y), \quad y(0) = y_0 \in \mathbb{R}^{d}\;,
\end{equs}
  can be described via tree-series called B-series (Butcher series). They are of the form
\begin{equs} \label{B-series-form}
	B(\alpha, F,h)(y) = \alpha(\mathbbm{1}) y + \sum_{\tau \in\mathbf{T} } h^{|\tau|} \alpha(\tau) \frac{F[\tau]}{S(\tau)}(y) 
\end{equs}
where $ h $ is the time step, $ \mathbf{T} $ corresponds to the set of rooted trees, $ \mathbbm{1} $ is the empty tree, $ |\tau| $ is the number of nodes of a given tree $ \tau $ and $ S(\tau) $ is its symmetry factor. The coefficients $ F[\tau] $ are elementary differentials depending on the non-linearities appearing in the ODE considered. The last term $ \alpha $ is a function from $ \mathbf{T} $ into $ \mathbb{R} $ that specifies the numerical method. 
These series were originally introduced by Butcher in \cite{Butcher72}. The term of B-series appeared in \cite{HW74}. There are two fundamental operations on these series:
\begin{itemize}
	\item Composition of B-series was originally introduced in \cite{Butcher72,HW74}. It is defined from the composition of a numerical scheme with a smooth function. It is an essential tool as one likes to compose numerical methods to get new ones. 
	\item Substitution of B-series introduced in \cite{CHV05,CHV07} is used mainly for the backward error analysis. The main point is to write the modified equation that a numerical scheme will solve exactly which implies an action onto the non-linearity of the ODE. The substitution is then the replacement of the non-linearity $ F $ by a well-chosen B-series.
\end{itemize}
A survey on the two previous operations is given in \cite{CHV10}.
One of the crucial theorems in the context of B-series is to show that these two operations give rise to B-series whose coefficients can be described with the help of two products $ \star_{\text{\tiny{BCK}}} $ and $\star_{\text{\tiny{EC}}}$:
\begin{equs}
	\label{composition_substitution}
	\begin{aligned}
		B(\alpha_1,F,h) \circ B(\alpha_2, F, h) & = B( \alpha_1 \star_{\tiny{\text{BCK}}} \alpha_2, F,h)
		\\ 	B(\beta_2, \frac{1}{h} B(h,F,\beta_1),h) & = B( \beta_2 \star_{\text{\tiny{EC}}} \beta_1, F,h).
		\end{aligned}
\end{equs}
where $ \circ $ is the composition product and the $ \alpha_i, \beta_i $ are maps from $ \mathbf{T} $ into $ \mathbb{R} $. The products $  \star_{\text{\tiny{BCK}}} $ and $\star_{\text{\tiny{EC}}}$  are defined on forests that are finite sets of trees.
One can think of the $ \alpha_i, \beta_i $ to be extended multiplicatively for the forest product which is the free commutative product on trees. They are called characters. The products $  \star_{\text{\tiny{BCK}}} $ and $\star_{\text{\tiny{EC}}}$ are convolution-type products which means that they are defined via coproducts:
\begin{equs} 
\alpha_1 \star_{\tiny{\text{BCK}}} \alpha_2 = \left( \alpha_1 \otimes \alpha_2 \right) \Delta_{\tiny{\text{BCK}}}, \quad \beta_1 \star_{\tiny{\text{EC}}} \beta_2 = \left( \beta_1 \otimes \beta_2 \right) \Delta_{\tiny{\text{EC}}}
	\end{equs}
where $ \Delta_{\text{\tiny{BCK}}} $ is the Butcher-Connes-Kreimer coproduct also used in Quantum Field Theory for renormalising Feynman diagrams and non-commutative geometry \cite{CK1,CK2}. The second coproduct $ \Delta_{\text{\tiny{EC}}} $ is the extraction-contraction coproduct introduced in \cite{CEM}. These coproducts are in co-interaction (see \cite{CEM}) which at a product level translates into
\begin{equs}
(\beta \star_{\tiny{\text{EC}}}	\alpha_1) \star_{\tiny{\text{BCK}}} (\beta \star_{\tiny{\text{EC}}} \alpha_2) =  \beta \star_{\tiny{\text{EC}}} ( \alpha_1 \star_{\tiny{\text{BCK}}}  \alpha_2) 
\end{equs}
where $ \beta, \alpha_1,\alpha_2 $ are characters.

The main result of this paper is to provide a similar perspective on the resolution of  singular stochastic partial differential equations (SPDEs) via the theory of Regularity Structures. The singular SPDEs considered in the present work are PDEs on $ \mathbb{R}_+ \times \mathbb{R}^d $ with the following form:
\begin{equs} \label{main_equation_bis}
	\partial_t u_{\mathfrak{t}} - \mathcal{L}_{\mathfrak{t}} u_{\mathfrak{t}}  = \sum_{\mathfrak{l} \in \Lab_-} F_{\mathfrak{t}}^{\mathfrak{l}}({\bf{u}}) \xi_{\mathfrak{l}}, \quad \mathfrak{t} \in \Lab_+,
\end{equs}
where $ \Lab_+, \Lab_- $ are finite sets, the $ \xi_{\mathfrak{l}} $ are space-time noises, $ {\bf{u}} $ is the collection of the $ u_{\mathfrak{t}} $ and some of their derivatives. The terms  $ \mathcal{L}_{\mathfrak{t}} $ are differential operators and the $ F_{\mathfrak{t}}^{\mathfrak{l}}({\bf{u}}) $ are non-linearities. The theory of Regularity Structures invented by Martin Hairer in \cite{reg} tells us that the solution of the previous equation could be described via a new Taylor-type expansion:
\begin{equs} \label{expansion_solution}
		u_{\mathfrak{t}} = \sum_{\tau \in \mathfrak{T}_{\mathfrak{t}}} c_{\mathfrak{t},\tau}(z) \, u_{\mathfrak{t},z,\tau} + R_{\mathfrak{t},\mathfrak{T}_{\mathfrak{t}},z}, \quad  |u_{\mathfrak{t},z,\tau}(z')| \lesssim |z'-z|^{\alpha_{\tau}}, 
	\end{equs}
where the $ u_{\mathfrak{t},z,\tau} $ are iterated integrals recentered around a point $ z \in \mathbb{R}_+ \times \mathbb{R}^d $ and the sets $ \mathfrak{T}_{\mathfrak{t}}$ are finite set of decorated trees. The coefficients $c_{\mathfrak{t},\tau}(z)$ could be understood as some type of derivatives. The $ \alpha_{\tau} \in \mathbb{R} $ is the order of the local behaviour  of $  u_{\mathfrak{t},z,\tau}  $ around the point $ z $. The term $ R_{\mathfrak{t},\mathfrak{T}_{\mathfrak{t}},z} $ is a Taylor remainder that behaves better than the iterated integrals in the sense that there exists an $ \alpha_{\mathfrak{t}} \in \mathbb{R} $ such that for every $ \tau \in \mathfrak{T}_{\mathfrak{t}} $
\begin{equs}
	|R_{\mathfrak{t},\mathfrak{T}_{\mathfrak{t}},z}(z')| \lesssim |z'-z|^{\alpha_{\mathfrak{t}}}, \quad \alpha_{\mathfrak{t}} \geq \alpha_{\tau}.
\end{equs}
We recommend the reader the following references \cite{FrizHai,BaiHos} for surveys on Regularity Structures.
From \cite{reg}, the key point for getting the solution is the construction of the integrals $ u_{\mathfrak{t},z,\tau} $ that may require a renormalisation procedure. The general definition for these terms has been given in \cite{BHZ} that constructed the renormalised integrals with the help of two Hopf algebras in co-interaction. The convergence of such renormalised integrals has been proved in \cite{CH16}. A Butcher series formalism for the expansion \eqref{expansion_solution} has been initiated in \cite{BCCH} and it has been used for understanding how the renormalisation of the iterated integrals changed the equation we started with by adding counter-terms. The method of \cite{BCCH} has been simplified in \cite{BaiHos} and in \cite{BB21,BB21b} a simple proof of the renormalised equation has been given that also works in the context of non-translation invariance where renormalisation constants are replaced by functions. The present work aims to push forward the formalism of $ \cite{BCCH} $ by introducing the notion of Regularity Structures Butcher series. Then, one gets an equivalent of \eqref{composition_substitution} and the following correspondence: 
\begin{itemize}
	\item Recentering iterated integrals is similar to the composition of Regularity Structures B-series.
	\item Renormalisation of iterated integrals is similar to the substitution of Regularity Structures B-series.
\end{itemize}
To make this statement precise, we introduce two types of B-series. We first need to rewrite \eqref{main_equation_bis} in integral form for $ \mathfrak{t} \in \Lab_+ $: 
\begin{equs}
	u_{\mathfrak{t}} = K_{\mathfrak{t}} * v_{\mathfrak{t}}, \quad v_{\mathfrak{t}} = \sum_{\mathfrak{l} \in \Lab_-} F_{\mathfrak{t}}^{\mathfrak{l}}({\bf{u}}) \xi_{\mathfrak{l}}.
\end{equs}
One can define two Regularity Structures B-series that will approximate well the solution $ u_{\mathfrak{t}} $ and the right hand side $ v_{\mathfrak{t}} $ of the equation \eqref{main_equation}:
\begin{equs}
	u_{\mathfrak{t}} \approx	B_+(\alpha,F, \mathfrak{t}, {\bf{u}}(x)), \quad v_{\mathfrak{t}} \approx B_-(\beta,F_{\mathfrak{t}}, {\bf{u}}(x))
\end{equs}
where the two series introduced have a similar form as $ \eqref{B-series-form} $. The main difference is the index set that now will be decorated trees as introduced in \cite{BHZ} for coding stochastic iterated integrals. The series $ B_+ $ (resp. $ B_- $) is indexed by the set of decorated trees $ \mathfrak{T}_+ $ (resp. $ \mathfrak{T} $). The two sets are quite different as $ \mathfrak{T}_+ $ corresponds to classical monomials and to the iterated integrals that describe $ u_{\mathfrak{t}} $. The latter starts with a convolution with the kernel $ K_{\mathfrak{t}} $ whereas the iterated integrals encoded by $ \mathfrak{T} $ contain products with the noises $ \xi_{\mathfrak{l}} $. This is a notable difference in comparison to the classical Butcher series where one does not make this distinction.
One defines new symmetry factors and elementary differentials for these trees. The small term in $ h $ disappears in the definition and now the characters $ \alpha : \CT_+ \rightarrow \mathcal{D}'(\mathbb{R}^{d+1},\mathbb{R}) $ and $ \beta : \CT \rightarrow \mathcal{D}'(\mathbb{R}^{d+1},\mathbb{R})$ take values in Schwartz distributions. In this work, we will consider smooth noises and therefore the characters take values in  $ \mathcal{C}^{\infty}(\mathbb{R}^{d+1},\mathbb{R}) $. Here $ \CT $ and $ \CT_+ $ are the linear span of $ \mathfrak{T} $ and $ \mathfrak{T}_+  $.  The character $ \alpha $ will be small when it is evaluated at a point close to the fixed point $ z $ in the sense that there exist $ (a_{\tau})_{\tau \in \mathfrak{T}} $ such that
\begin{equs}
	|\alpha(\tau)(z')| \lesssim |z'-z|^{a_{\tau}}.
\end{equs}
We extend the definition of the composition $\circ$ and the substitution $ \circ_s$ of B-series to the series $ B_+ $  and $ B_- $  that we denote by
\begin{equs}
	&	B_-(\alpha_1,F, \cdot) \circ B_+(\alpha_2,F, \cdot, {\bf{u}}(z)),
	\\
	&	B_-(\beta_1,\cdot,{\bf{u}}(z)) \circ_s B_-(\beta_2,F, {\bf{u}}(z)).  \end{equs}
Below, we give the main result of this paper which is an extension of \eqref{composition_substitution} to the Regularity Structures case:
\begin{theorem} \label{main_result} One has
\begin{equs}
	&	B_-(\alpha_1,F, \cdot) \circ B_+(\alpha_2,F, \cdot, {\bf{u}}(z)) = 	B_-(\alpha_2 \star_{2} \alpha_1, F, {\bf{u}}(z))
	\\
	&	B_-(\beta_1,\cdot,{\bf{u}}(z)) \circ_s B_-(\beta_2,F, {\bf{u}}(z)) = B_-(\beta_2 \star_1 \beta_1 ,F,{\bf{u}}(z)) 
\end{equs}
where the product $ \star_2 $ (resp. $ \star_1 $) is a deformation of $ \star_{\text{\tiny{BCK}}}  $ ( $ \star_{\text{\tiny{EC}}}  $) in the sense of \cite{BM22}.
  \end{theorem}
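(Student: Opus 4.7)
The strategy is to mimic the classical B-series derivation of \eqref{composition_substitution}, but carefully track how the node/edge decorations appearing in $\mathfrak{T}$ and $\mathfrak{T}_+$ transform under the two operations. The combinatorial skeleton is identical to the ODE case; the novelty is entirely in the deformation of the coproducts by the polynomial decorations that encode recentering.

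\textbf{Step 1: Unpack the series.} I would first write both sides in terms of their defining sums
\begin{equs}
B_+(\alpha_2, F, \cdot, {\bf{u}}(z)) = \sum_{\tau \in \mathfrak{T}_+} \frac{\alpha_2(\tau)}{S(\tau)} \,\Upsilon^F[\tau]({\bf{u}}(z)), \qquad
B_-(\beta, F, {\bf{u}}(z)) = \sum_{\sigma \in \mathfrak{T}} \frac{\beta(\sigma)}{S(\sigma)}\, \Upsilon^F[\sigma]({\bf{u}}(z)),
\end{equs}
where $\Upsilon^F[\tau]$ denotes the elementary differential attached to a decorated tree. Then I would make the definitions of $\circ$ and $\circ_s$ completely explicit: $\circ$ Taylor-expands the $\mathbf{u}$-argument of $\Upsilon^F[\sigma]$ around ${\bf{u}}(z)$ using the $B_+$-series, while $\circ_s$ replaces each non-linearity $F_{\mathfrak{t}}^{\mathfrak{l}}$ occurring in $\Upsilon^F$ by the corresponding $B_-$-series.

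\textbf{Step 2: Composition identity.} Expanding the right $B_+$ inside each elementary differential $\Upsilon^F[\sigma]$ appearing in $B_-(\alpha_1,F,\cdot)$ produces, by Leibniz and the chain rule for elementary differentials, a sum indexed by ways of grafting a forest of trees from $\mathfrak{T}_+$ onto the nodes of $\sigma \in \mathfrak{T}$. I would identify each such grafted configuration with a tree $\tau \in \mathfrak{T}$ together with an admissible cut: the trunk below the cut lies in $\mathfrak{T}_+$ (it carries the polynomial/recentering data), the crown above the cut lies in $\mathfrak{T}$. This is exactly the combinatorial content of the deformed Butcher--Connes--Kreimer coproduct $\Delta_2$ of \cite{BM22}, the deformation being dictated by how polynomial decorations are redistributed at the cut. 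Collecting terms and comparing symmetry factors via the standard identity $S(\tau) = S(\sigma)\,\prod S(\tau_i)\,\#\{\text{equivalent graftings}\}^{-1}$ yields the first equality.

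\textbf{Step 3: Substitution identity.} For $\circ_s$, replacing each occurrence of $F_{\mathfrak{t}}^{\mathfrak{l}}$ by $B_-(\beta_2, F, {\bf{u}}(z))$ and expanding produces trees in $\mathfrak{T}$ obtained by inserting subtrees at the internal nodes of $\sigma$. The enumeration of such insertions, weighted by the appropriate multinomial factors and divided by symmetry factors, is precisely the deformed extraction--contraction coproduct $\Delta_1$ of \cite{BM22}: the extracted subforest sits in $\mathfrak{T}$, the contracted skeleton sits in $\mathfrak{T}$, and decorations split according to the deformation rule. The identity then follows from the definition $\beta_2 \star_1 \beta_1 = (\beta_2 \otimes \beta_1)\Delta_1$.

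\textbf{Main obstacle.} The bookkeeping of the decorations (node polynomial decorations and edge type decorations inherited from $\cite{BHZ}$) is the delicate point: one must show that the recentering data on a grafted/extracted subtree recombines exactly into the deformation of the coproduct in the sense of \cite{BM22}, rather than merely combinatorially matching the undecorated BCK/EC coproducts. I would handle this by a generating-function / test-against-monomials argument, checking that after pairing with an arbitrary polynomial decoration the two sides satisfy the same recursion on tree size; the symmetry-factor accounting and the verification that grafts of $\mathfrak{T}_+$-trees onto $\mathfrak{T}$-trees remain in $\mathfrak{T}$ (closure under the operation) are routine once the deformation rule is fixed.
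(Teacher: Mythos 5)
Your plan has the right overall shape (expand, then match against the deformed coproducts), but the step you defer as ``routine once the deformation rule is fixed'' is precisely the mathematical content of the theorem, and your sketch contains no mechanism for it. In the paper the passage from the expansion of the composed series to the product $\star_2$ does not go through a hand identification of grafted configurations with admissible cuts and a counting of ``equivalent graftings''; it goes through the Fa\`a di Bruno formula of \cite{BCCH} together with the morphism property of the elementary differentials, Proposition~\ref{star_morphism}, namely $F_{\mathfrak{t}}\big((X^{k}\prod_{j}\CI_{a_j}(\tau_j))\star_2\sigma\big)=\{\partial^k D_{a_1}\cdots D_{a_n}F_{\mathfrak{t}}(\sigma)\}\prod_j F_{\mathfrak{t}_j}(\tau_j)$, whose proof rests on the non-commutation $D_a\partial^{e_i}=\partial^{e_i}D_a+D_{a-e_i}$, the analytic counterpart of \eqref{non_commutation}. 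This is exactly where the binomial-coefficient deformation of $\widehat{\curvearrowright}^a$ and the operators $\tilde{\uparrow}^{k}$ enter, and it cannot be produced by ``Leibniz and the chain rule'' for undeformed elementary differentials; your proposed generating-function afterthought is a promise, not an argument. The symmetry factors are likewise not handled by the identity you invoke: the paper uses the inner product $\langle\sigma,\tau\rangle=\delta_{\sigma,\tau}S(\tau)$, with respect to which $\star_2$ is by construction adjoint to $\Delta_2$, so the $S(\tau)$'s recombine automatically. Incidentally, your attribution of the tensor factors is reversed: in $\Delta_2:\CT\rightarrow\CT_+\hattimes\CT$ the $\CT_+$ factor consists of the cut-off planted branches and monomials (paired with the character of the inner series $B_+$), while the trunk, which keeps the root noise $\Xi_{\mathfrak{l}}$, stays in $\CT$ and is paired with the outer character.

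For substitution there are two genuine gaps. First, the operation is not ``replace each non-linearity $F^{\mathfrak{l}}_{\mathfrak{t}}$ by the $B_-$-series'': in \eqref{substitution_B_series} the replacement happens only at nodes carrying the label $\mathfrak{l}=0$, recursively through the tree, while the non-linearities attached to genuine noises are left untouched; with your reading the result would not match $\beta_2\star_1\beta_1$ as defined from $\Delta_1$. Second, even with the correct definition, matching the insertion expansion with $\Delta_1$ requires showing that the substituted elementary differential factorises through the tree structure. The paper does this by rewriting $\beta\star_1\alpha=\alpha(M_{\beta}\,\cdot)$ and proving $F(M_{\beta}^{*}\tau)=\hat F(\tau)$ by induction, which uses the co-interaction between $\Delta_1$ and $\Delta_2$ in the form \eqref{productM} (that $M_{\beta}^{*}$ is a morphism for $\star_2$), Proposition~\ref{consequence_M}, and once more Proposition~\ref{star_morphism}, together with $M_{\beta}^{*}\Xi_0=\sum_{\tau'}\beta(\tau')\tau'/S(\tau')$ and $M_{\beta}^{*}\Xi_{\mathfrak{l}}=\Xi_{\mathfrak{l}}$ for $\mathfrak{l}\neq 0$. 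Your sketch never invokes the co-interaction, and a purely enumerative matching of ``insertions with multinomial factors'' against $\Delta_1$ would in effect have to reprove it; without that ingredient the argument does not close.
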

This theorem is split into two theorems in Section~\ref{sec::4} of this paper: Theorem \ref{main_composition} and Theorem \ref{thm_substition}. In these theorems, we will consider a slightly more general context where $ B_+(\alpha_2,F,(\mathfrak{t},m),\mathbf{u}(z)) $ depends on $ (\mathfrak{t},m) = \Lab_+ \times \mathbb{N}^{d+1} $. This corresponds to local expansion for $ D^{m} u_{\mathfrak{t}} $. Here, the $D^{m}$ are derivatives. 
The products $ \star_1 $ and $ \star_2 $ have been originally introduced in \cite{BHZ} for describing renormalised models in the context of Regularity Structures. The product $ \star_1 $ is used for the renormalisation by extracting and contracting subforests. This renormalisation is essential for proving the general convergence theorem for renormalised model in \cite{CH16}. It is very much inspired from the BPHZ schemes developed for renormalising Feynman diagrams in Quantum Field Theory (see \cite{BP,Hepp,Zim}). 

Before outlining the content of the paper, we give a couple of remarks on the scope and the perspectives offered by the Regularity Structures B-series.
\begin{remark}
B-series are used in numerical analysis for finding numerical schemes having some structures preserving properties. For example, one has algebraic identities that the character $ \alpha $ in \eqref{B-series-form} must satisfy for having a symplectic scheme. Also, symmetric schemes are well-understood. We recommend the book \cite{HLW} for an overview of these properties. Let us mention that decorated trees have been used in \cite{BS} for deriving low regularity schemes in the context of dispersive equations with rough initial data. These decorated trees are very similar to the ones used for singular SPDEs. The scheme is based on a tree series and recently a large class of symmetric schemes has been found in \cite{ABMS23}. One can wonder how one can extract ideas from B-series to understand better symmetries of solutions of singular SPDEs. So far the main symmetries obtained are chain rule and Itô Isometry for the stochastic geometric heat equations in \cite{BGHZ} and Gauge invariance for the stochastic Yang-Mills equation in two and three dimensions in \cite{CCHS22, CCHS23}. One can hope to try to connect the search of symmetries to Regularity Structures B-series. 
	\end{remark}
\begin{remark}
	A natural extension of B-series in numerical analysis is the Lie Butcher-series developed in \cite{ML08,ML13}. These are series that take  values in a Lie group. Composition and substitution have been understood in this case via Hopf algebras on planar trees. These Hopf algebras come from a post-Lie structure. A post-Lie product was first observed at the level of partition of posets (see \cite{Val}). Substitution has even been understood at an operadic level in \cite{Rahm1} when one uses the post-Lie operad (see also \cite{ChaLiv,CH04} for an operadic interpretation in the case of a pre-Lie product on non-planar rooted trees). One can wonder if such an extension is possible for singular SPDEs. The extension of the algebraic part is performed in \cite{Rahm2} but the analytical operations like substitution and composition are not defined. One is missing an example of a singular SPDE where such a structure could be used.
	Recently, Regularity Structures on manifolds have been worked out in \cite{HS23}. Regularity Structures B-series could be also described  in this context.
	\end{remark}
\begin{remark}
	One can think about replacing the indexed set of decorated trees by something else. Indeed, multi-indices have been introduced in the context of quasi-linear SPDEs in \cite{OSSW} and they inspired a non-diagrammatic proof for the convergence of the model in \cite{LOTT}. One has similar Hopf algebras and products as in the case of decorated trees. See \cite{LOT} for the algebraic construction  and \cite{BL23} where the authors define general multi-indices and the renormalisation of their associated models.
	The first step will be definitely to write numerical schemes with the help of multi-indices for ordinary differential equations as the theory is missing in this case. This step has been completed in \cite{BEFH24} after the first arxiv version of the present paper. The proofs of composition and substitution of multi-indice B-series are directly inspired by the proofs of this paper. Rough paths multi-indices have been treated in \cite{L23}. One of the main advantage of multi-indices is to provide a compression  of the coding as several decorated trees 	are now encoded via the same combinatorial object.
	\end{remark}
\begin{remark}
	Regularity Structures is not the only efficient tool for treating singular SPDEs. Paracontrolled calculus has been introduced as the same time in \cite{GIP}. For a while, it was limited to small expansions but with \cite{BB19} one can have higher expansions. 
	Algebraic structures for dealing with  the para-linearisation and all the commutators produced are not well-understood although the correspondence between paracontrolled and Regularity Structures has been firmly established in \cite{BH21,BH21b}. One of the main advantages of paracontrolled calculus is that it is widely used for dispersive SPDEs. Therefore, an understanding of the solution via a Butcher-series could provide
	a new perspective on this expansion.
	\end{remark}

Let us outline the paper by summarising the content of its sections. In Section~\ref{sec::2}, we recall the definition of Butcher-series for describing numerical methods for ordinary differential equations. After defining the composition with a smooth function in \eqref{composition_B_series_smooth}, we introduce the composition between two B-series \eqref{composition_B_series} that is essential for composing numerical methods. 
Theorem~\ref{composition_Butcher} makes the connection between this composition and the Butcher-Connes-Kreimer coproduct defined by \eqref{def_BCK}.
The second main operation is substitution defined in \eqref{substitution_def}. It is used in the context of backward error analysis. The second main theorem is Theorem~\ref{comp_B_series} that connects this operation with extraction-contraction coproduct given in \eqref{EC_coproduct}. We cloture the section with a presentation of the co-interaction \eqref{cointeraction_BCK_EC} and the Remark~\ref{remark_rp} on the extension of these series to Rough Paths.  

In Section~\ref{sec::3}, we present the main combinatorial objects for describing solutions of singular SPDEs that are decorated trees with both decorations on the vertices and on the edges. Then, we proceed by the introduction of two operations on decorated trees that are deformed grafting products in \eqref{deformed_grafting_a} and increasing node decorations in \eqref{increasing_node}. These two operations are enough for defining an associative product $ \star_2 $ in \eqref{def_star_2} which is the dual of the coproduct $ \Delta_2 $ introduced in \cite{BHZ}.  The second product $ \star_1 $ in this section is defined  from the coproduct $ \Delta_1 $.
This coproduct is introduced via $ \Delta_2 $ in \eqref{recursive_def_delta1}. A direct recursive definition of $ \star_2 $ from $ \star_1 $ is also given. The two crucial properties of this section are the multiplicative property of the map  $ M^*_{\alpha}$ in \eqref{productM} wich is a consequence of the co-interaction bewteen $ \Delta_1 $ and $ \Delta_2 $. One also gets a second property for $ M^*_{\alpha}$  which is \eqref{consequence_M}. The presentation  of these products draws its inspiration from \cite{BM22}.

In Section~\ref{sec::4}, we introduce the new concept of Regularity Structures B-series in Definition \ref{Def_B_-}. They are series built upon the decorated trees introduced before. One has to deal with two types of B-series that correspond to the solution of the SPDEs considered or an expansion of the right hand side of the equation (see Definition~\ref{def_B_+}). We define composition in \eqref{compostion_B_Series} from the definition used in \cite{reg,BCCH} for composing a modelled distribution with a smooth function. This can be viewed as a generalisation of the composition of B-series: one has to move from a Taylor expansion in time for ODE to a Taylor-type expansion in space-time for PDE. The main theorem for composition is Theorem \ref{main_composition} that establishes the relation with the product $ \star_2 $.
Then, we introduce substitution in \eqref{substitution_B_series} and the connection with the product $ \star_1 $ is given in
 Theorem \ref{thm_substition}. We finish the section with Theorem \ref{thm_root_substitution} which proposes a link between substitution and composition. 
Let us mention that the proof of Theorem \ref{main_composition} is a generalisation of the proof given \cite{BB21} for the renormalised equation.
The proof of Theorem \ref{thm_substition} deviates from the one given for the renormalised equation in \cite{BCCH} by using a morphism property for the product $ \star_2 $.
The last Theorem \ref{thm_root_substitution} is connected to one of the key arguments in the short proof of the renormalised equation given in \cite{BB21} which is to use a character multiplicative for the tree product that carries part of the renormalisation. Then, the missing renormalisation can be interpreted as a composition.

\subsection*{Acknowledgements}

{\small
Y. B. gratefully acknowledges funding support from the European Research Council (ERC) through the ERC Starting Grant Low Regularity Dynamics via Decorated Trees (LoRDeT), grant agreement No.\ 101075208.
Y. B. thanks the Max Planck Institute for Mathematics in the Sciences (MiS) in Leipzig for having supported his research via a long stay in Leipzig from January to June 2022. The research course " Decorated Trees for singular (stochastic) PDEs" given during that stay puts the basis of this work.     Y. B. thanks the participants of the workshop "Hopf algebras, operads, deformations for singular dynamics"  funded by the ANR via the project LoRDeT (Dynamiques de faible régularité via les arbres décorés) from the projects call T-ERC\_STG. Their talks give some inspirations to this work. Finally, Y. B. thanks the Centre for Advanced Study (CAS) at The Norwegian Academy of Science and Letters for the nice environment offered during a long stay in September 2023 for the programme "Signature for Images" when most of this work was written. 
} 

\section{B-series in Numerical Analysis}

\label{sec::2}
 
Given an ordinary differential equation (ODE)
\begin{equs}\label{eq: ode}
	\partial_t y = F(y), \quad y(0) = y_0 \in \mathbb{R}^{d}\;,
\end{equs}
a B-series associated to it is a discrete time-stepping method $ y_k \mapsto y_{k+1}  $ given by an expansion over rooted trees:
\begin{equs}\label{eq: B-series formula}
	y_{k+1} - y_k = \sum_{\tau \in \mathbf{T}}  h^{|\tau|} \alpha(\tau) \frac{F[\tau](y_k)}{S(\tau)},
\end{equs}
where $\mathbf{T}$ is the set of rooted combinatorial trees. In the sequel, we also consider forests on these trees as unordered sets of trees. The forest product could be seen as the disjoint union of two forests. In the right hand side of~\eqref{eq: B-series formula}, $h > 0$ is the step size, $ |\tau| $ denotes the number of nodes of $\tau$, $ S(\tau) $ denotes its symmetry factor. It is defined for $ \tau
= \prod_{i=1}^n \CI(\tau_i)^{\beta_i} $
where the $ \tau_i $ are disjoint rooted trees by:
\begin{equs}
	S(\tau) = \prod_{i=1}^n \beta_i ! \, S(\tau_i)^{\beta_i}.
\end{equs}
The notation  $ \prod_{i=1}^n \CI(\tau_i)^{\beta_i} $ means that the $ \tau_i $ are connected to the same root via an edge connecting each root of the $ \tau_i $ to a new root. The map $ \alpha : \mathbf{T} \rightarrow \mathbb{R}$ is a function which determines the time stepping method.
	One natural choice of $\alpha$ is setting
	 \begin{equs} \label{choice_Taylor}
	 	\alpha(\tau) = 1/\gamma(\tau)
	 \end{equs}
  where $\gamma(\tau)$ is given for $ \tau = \prod_{i=1}^n \CI(\tau_i) $ by 
	\begin{equs}
		\gamma(\bullet) = 1, \quad \gamma(\tau) = |\tau| \prod_{j=1}^{n}\gamma(\tau_{j}).
	\end{equs}
An alternative notation is to use the $ \mathcal{B}_+ $ operator going from the forests into the rooted trees which is such that
\begin{equs}
	\mathcal{B}_+(\tau_1 \cdots \tau_n) = \prod_{i=1}^n \CI(\tau_i)
\end{equs}
where $ \tau_1 \cdots \tau_n $ denotes the forest formed of the trees $ \tau_i $.
	With the choice \eqref{choice_Taylor}, the formula given in~\eqref{eq: B-series formula} coincides with the Taylor expansion  of the exact solution to the initial value problem. 
The elementary differentials $ F(\tau) $ are defined recursively for $ \tau = \prod_{i=1}^n \CI(\tau_i)$ by  
\begin{equs}
	F[\bullet] = F, \quad 	F[\tau](y)
	=
	F^{(n)}(y)
	\prod_{j=1}^{n} F\big[
	\tau_j
	\big](y)\;.
\end{equs}
One uses the following notation for describing a B-series:
\begin{equs}
	B(\alpha,F,h) = \alpha(\mathbbm{1}) \id + \sum_{\tau \in\mathbf{T} } h^{|\tau|} \alpha(\tau) \frac{F[\tau]}{S(\tau)}
\end{equs}
where $ \mathbbm{1} $ is the empty tree and the arguments of $ B $ gather the main parameters of the numerical method.
One important notion about these B-series is the composition with a smooth function $ G : \mathbb{R}^d \rightarrow \mathbb{R}^d $ defined up  to order $ m $ by:
\begin{equs} \label{composition_B_series_smooth}
	\left( G \circ_m	B(\alpha,F,h) \right)(u) = \sum_{k \leq m } \left(  B(\alpha, F,h)(u) - u \right)^k \frac{G^{(k)}(u)}{k!}
\end{equs}
This truncation is needed for having a finite series and it is natural in a numerical analysis context where the order of the approximation is known. In the sequel, we will assume that the parameter $ m $ has been fixed for the composition and we will not mention  it. Therefore, we will denote the product as $ \circ $ instead of $ \circ_m $.
Then, using this definition, one can define the composition of B-series (see \cite{Butcher72,HW74}), that is the following term:
\begin{equs}
	\label{composition_B_series}
		B(\alpha,F,h) \circ B(\beta,F,h) = B(\beta,F,h)  + \sum_{\tau \in\mathbf{T} } h^{|\tau|}  \frac{\alpha(\tau)}{S(\tau)} F[\tau] \circ  B(\beta, F,h)
	\end{equs}
where we have assumed that $ \alpha(\mathbbm{1}) = 1 $.
We want to rewrite \eqref{composition_B_series} in a simple form as a B-series. To complete this purpose, one has to first introduce
  the Butcher-Connes-Kreimer coproduct $ \Delta_{\text{\tiny{BCK}}} $ defined on rooted trees by
\begin{equs} \label{def_BCK}
	\Delta_{\tiny{\text{BCK}}} \tau = \sum_{ c \in \scriptsize{\text{Adm}}(\tau) } R^c(\tau) \otimes P^{c}(\tau) + \tau \otimes \mathbbm{1}.
\end{equs}
where $ \text{Adm}(\tau) $ are admissible edge cuts that are subsets of edges of $ \tau $ which contain at most one edge from
each path in $ \tau $ that starts in the root and ends in a leaf. Here, we have used $ P^c(\tau)$ to denote the pruned forest that is formed by collecting all
the edges above the cut. The term  $ R^c(\tau)$ corresponds to the "trunk", that is the subforest formed
by the edges not lying above the ones upon which the cut was performed. The coproduct  $ \Delta_{\text{\tiny{BCK}}} $ is extended multiplicatively for the forest product. Given two characters $ \alpha, \beta $ that are multiplicative maps for the forest product into $ \mathbb{R} $, we set:
\begin{equs} \label{convolution_product_BCK}
\alpha \star_{\tiny{\text{BCK}}} \beta = \left( \alpha \otimes \beta \right)	\Delta_{\tiny{\text{BCK}}}
\end{equs}
where we have omitted the multiplication in the end as we identify $ \mathbb{R} \otimes \mathbb{R} $ with $ \mathbb{R} $. The product $ \star_{\text{\tiny{BCK}}} $ is called a convolution product defined from the coproduct $ \Delta_{\text{\tiny{BCK}}} $. Equipped with the coproduct $ \Delta_{\text{\tiny{BCK}}}  $ and the forest product, the linear span of forests forms the Butcher-Connes-Kreimer Hopf algebra that we denote by $ \mathcal{H}_{\text{\tiny{BCK}}} $.

\begin{theorem} \label{composition_Butcher}
	Let $ \alpha, \beta $ be characters of  $ \mathcal{H}_{\text{\tiny{BCK}}} $ then the composition of B-series satisfies
	\begin{equs}
	B(\beta,F,h) \circ B(\alpha, F,h) = B( \beta \star_{\tiny{\text{BCK}}} \alpha, F,h).
	\end{equs}
	\end{theorem}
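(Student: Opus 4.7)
The plan is to expand both sides of the claimed identity as formal series in the elementary differentials $F[\tau]$ indexed by rooted trees, and match the coefficient of $h^{|\tau|} F[\tau](u)/S(\tau)$ for each $\tau \in \mathbf{T}$. On the right-hand side, by the very definition of the convolution product in \eqref{convolution_product_BCK} together with \eqref{def_BCK}, this coefficient is
\[
(\beta \star_{\text{\tiny{BCK}}} \alpha)(\tau) = \beta(\mathbbm{1})\alpha(\tau) + \beta(\tau)\alpha(\mathbbm{1}) + \sum_{c \in \text{Adm}(\tau)} \beta(R^c(\tau))\,\alpha(P^c(\tau)),
\]
a sum indexed by the admissible cuts of $\tau$ (with the two extreme cuts split off for clarity).

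For the left-hand side, I apply \eqref{composition_B_series} to reduce to studying $F[\sigma] \circ B(\alpha, F, h)$ term by term in $\sigma$. Each such composition is Taylor-expanded via \eqref{composition_B_series_smooth}, producing
\[
F[\sigma] \circ B(\alpha, F, h)(u) = \sum_{k \geq 0} \frac{1}{k!} F[\sigma]^{(k)}(u) \left(\sum_\rho h^{|\rho|} \frac{\alpha(\rho) F[\rho](u)}{S(\rho)}\right)^k,
\]
which unfolds as a sum over ordered tuples $(\rho_1, \ldots, \rho_k)$ of rooted trees. The key combinatorial identity, provable by induction on $|\sigma|$ by iterating the Leibniz rule against the recursive definition of $F[\sigma] = F^{(n)} \prod_j F[\sigma_j]$, is that $F[\sigma]^{(k)}(u)\cdot(F[\rho_1](u), \ldots, F[\rho_k](u))$ equals the sum of $F[\tau](u)$ over all trees $\tau$ obtained by grafting the labelled forest $(\rho_1, \ldots, \rho_k)$ onto the vertices of $\sigma$. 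This produces a bijection between grafting data $(\sigma, \rho_1, \ldots, \rho_k, \text{target vertices})$ and triples $(\tau, c, \text{ordering of }P^c(\tau))$, where $c$ is an admissible cut of $\tau$ with $R^c(\tau) = \sigma$ and $P^c(\tau) = \rho_1 \cdots \rho_k$.

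The main obstacle is the symmetry factor bookkeeping. For a fixed pair $(\tau, c)$, suppose the pruned forest $P^c(\tau)$ contains each distinct tree $\rho^{(j)}$ with multiplicity $n_j$ (so $\sum_j n_j = k$). The number of orderings of this forest is $k!/\prod_j n_j!$, which combines with the $1/k!$ from Taylor and the product $1/\prod_i S(\rho_i)$ to yield $1/S(P^c(\tau))$, using the forest symmetry factor $S(P^c(\tau)) = \prod_j n_j! \, S(\rho^{(j)})^{n_j}$. One then invokes the classical combinatorial identity for rooted trees asserting that the number of grafting configurations on $\sigma$ yielding a fixed tree $\tau$, multiplied by $S(\sigma) S(P^c(\tau))$, equals $S(\tau)$ times the number of admissible cuts of $\tau$ producing the trunk/pruned-forest data $(\sigma, P^c(\tau))$. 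Substituting this back, the coefficient of $h^{|\tau|} F[\tau]/S(\tau)$ on the left-hand side collapses to precisely $(\beta \star_{\text{\tiny{BCK}}} \alpha)(\tau)$; the $\alpha(\tau)\beta(\mathbbm{1})$ contribution is furnished by the $B(\alpha, F, h)$ summand in \eqref{composition_B_series}, the $\beta(\tau)\alpha(\mathbbm{1})$ contribution by the $k=0$ Taylor term, and the generic cuts by the $k \geq 1$ Taylor terms. This delicate but standard factorial identity, together with the grafting/cut bijection, is the technical core of the proof.
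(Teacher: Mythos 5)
Your overall route is the classical one (Taylor-expand the composition, use the grafting identity $F[\sigma]^{(k)}(u)\bigl(F[\rho_1](u),\dots,F[\rho_k](u)\bigr)=\sum_{\text{attachments}}F[\tau](u)$, then match coefficients tree by tree), and the multinomial bookkeeping turning $\tfrac{1}{k!}\cdot\tfrac{k!}{\prod_j n_j!}\cdot\prod_i S(\rho_i)^{-1}$ into $S(P^c(\tau))^{-1}$ is correct. The paper itself does not prove this classical statement (it recalls it from \cite{Butcher72,HW74,CHV10}), so the comparison point is really the analogous computation it does carry out for Theorem~\ref{main_composition}. The genuine problem is that the counting identity you invoke as ``the technical core'' is stated backwards. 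What your coefficient match requires is: for fixed trunk $\sigma$, fixed unordered pruned forest $\rho$, and target tree $\tau$, if $A$ denotes the number of attachment maps (for one fixed labelling of the components of $\rho$) producing $\tau$ and $n$ the number of admissible cuts $c$ of $\tau$ with $R^c(\tau)=\sigma$, $P^c(\tau)=\rho$, then
\[
A\cdot S(\tau)\;=\;S(\sigma)\,S(\rho)\cdot n .
\]
You assert instead that (number of grafting configurations)$\,\times\,S(\sigma)S(P^c(\tau))=S(\tau)\times$(number of cuts), i.e.\ the symmetry factors sit on the wrong sides. A small counterexample: take $\sigma$ the cherry (root with two leaves, $S(\sigma)=2$), $\rho=\bullet$, and $\tau$ the tree obtained by attaching $\bullet$ to one leaf ($S(\tau)=1$); then $A=2$, $n=1$, and the correct identity reads $2\cdot 1=2\cdot 1\cdot 1$, whereas your version would demand $2\cdot 2\cdot 1=1\cdot 1$. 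With the identity as you wrote it, the coefficient of $h^{|\tau|}F[\tau]/S(\tau)$ does not collapse to $(\beta\star_{\text{\tiny{BCK}}}\alpha)(\tau)$, so the proof does not close as written; with the corrected identity it does.

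Two further remarks. First, the corrected identity is exactly the statement that grafting (the Grossman--Larson-type product) is dual to $\Delta_{\text{\tiny{BCK}}}$ with respect to the pairing $\langle\sigma,\tau\rangle=\delta_{\sigma,\tau}S(\tau)$; this is how the paper packages the same bookkeeping in the proof of Theorem~\ref{main_composition}, via $m(\sigma,\tau,\bar\tau)=\bigl\langle \tfrac{\sigma}{S(\sigma)}\otimes\tfrac{\tau}{S(\tau)},\Delta_2\bar\tau\bigr\rangle$, which lets one avoid any explicit cut count (and also sidesteps your loose claim of a ``bijection'' between grafting data and cut data, which only holds for labelled representatives -- for isomorphism classes the correspondence carries exactly the automorphism multiplicities that the identity above accounts for). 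Second, your expansion of $(\beta\star_{\text{\tiny{BCK}}}\alpha)(\tau)$ includes the term $\beta(\mathbbm{1})\alpha(\tau)$, which is indeed needed to absorb the $B(\alpha,F,h)$ summand in \eqref{composition_B_series}; note that this presupposes that the coproduct contains the term $\mathbbm{1}\otimes\tau$ (empty trunk), which is not visible in the literal reading of \eqref{def_BCK}, so you should say explicitly which convention for $\text{Adm}(\tau)$ you are using.
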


A second main operation has been defined on B-series which is the substitution of a B-series into another (see \cite{CHV05,CHV07}). It is given by
\begin{equs} \label{substitution_def}
		B(\beta, F,h) \circ_s B(\alpha,F,h) = 	B(\beta, \frac{1}{h} B(\alpha,F,h),h). 
\end{equs}
It boils down to replacing the vector field $ F $ by a B-series with the constraint that $ \alpha $ should be zero on $ \mathbbm{1} $. We need an extra coproduct for building a new convolution product that will allows us to obtain a more precise description of this operation.

This new  coproduct  $ \Delta_{\text{\tiny{EC}}} $ is defined by extraction and contraction of subtrees. Given a rooted tree $ \tau $, one has
\begin{equs} \label{EC_coproduct}
	\Delta_{\tiny{\text{EC}}} \tau = \sum_{\tau_1 \cdots \tau_n \subset \tau}  \tau_1 \cdots \tau_n  \otimes \tau / \tau_1 \cdots \tau_n
\end{equs}
where the sum is performed on all spanning subforests $ \tau_1 \cdots \tau_n $ of $ \tau $. A spanning subforest of $ \tau $ is composed of subtrees of $ \tau $ such that each vertex of $ \tau $ is contained exactly in one of these trees.
 The term $ \tau / \tau_1 \cdots \tau_n  $ corresponds to the contraction  of  each subtree to a
single vertex. This coproduct is extended multiplicatively to the forest product. Equipped with the coproduct $ \Delta_{\text{\tiny{EC}}}  $ and the forest product, the linear span of forests formed the extraction-contraction bialgebra that we denote by $ \mathcal{H}_{\text{\tiny{EC}}} $. As before, given two characters $ \alpha, \beta $, we define a convolution product out of $ \Delta_{\text{\tiny{EC}}}  $:
\begin{equs}
	\alpha \star_{\tiny{\text{EC}}} \beta = \left( \alpha \otimes \beta \right)	\Delta_{\tiny{\text{EC}}}
\end{equs} 

\begin{theorem} \label{comp_B_series}
Let $ \alpha, \beta : \mathbf{T} \oplus \mathbb{R} \mathbbm{1}  \rightarrow \mathbb{R} $ be linear maps satisfying $ \alpha(\mathbbm{1}) = 0 $. Extend $ \alpha $ to $ \CH_{\text{\tiny{EC}}} $  multiplicatively.
Then the substitution of B-series satisfies
\begin{equs}
	B(\beta, \frac{1}{h} B(\alpha,F,h),h) = B(\alpha \star_{\text{\tiny{EC}}} \beta, F,h).
\end{equs}
\end{theorem}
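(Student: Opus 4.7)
The plan is to expand both sides of the identity as tree series and match coefficients. First I would unfold the substituted vector field as
\begin{equs}
\tilde F := \tfrac{1}{h}\, B(\alpha,F,h) = \sum_{\sigma \in \mathbf{T}} h^{|\sigma|-1}\, \frac{\alpha(\sigma)}{S(\sigma)}\, F[\sigma],
\end{equs}
where the empty-tree term vanishes thanks to the hypothesis $\alpha(\mathbbm{1})=0$. Plugging $\tilde F$ into $B(\beta,\cdot,h)$ produces a double tree-sum; the aim is to reorganise it as a single sum indexed by a new tree $\hat\tau$, with coefficients precisely $(\alpha\star_{\text{\tiny{EC}}}\beta)(\hat\tau)/S(\hat\tau)$.

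The central computation is to evaluate $\tilde F[\tau]$ for a fixed rooted tree $\tau$. Recursively, this elementary differential is built by applying at each node $v$ of $\tau$ with $n_v$ children the $n_v$-th derivative of $\tilde F$ to the elementary differentials of the child branches. Since $\tilde F$ is itself a sum over trees, this produces at each node $v$ an independent choice of a subtree $\sigma_v\in\mathbf{T}$ contributing the weight $h^{|\sigma_v|-1}\alpha(\sigma_v)/S(\sigma_v)$, while the Leibniz rule assembles the various $F[\sigma_v]$'s into a new elementary differential $F[\hat\tau]$, where $\hat\tau$ is obtained from $\tau$ by blowing up each node $v$ into the tree $\sigma_v$ and grafting the child branches of $v$ onto the root of $\sigma_v$. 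Tallying the powers of $h$ gives
\begin{equs}
|\tau|+\sum_{v\in N(\tau)}(|\sigma_v|-1)=\sum_v|\sigma_v|=|\hat\tau|,
\end{equs}
matching the $h^{|\hat\tau|}$ prefactor expected on the right-hand side.

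The decisive combinatorial step is to recognise the map $(\tau,(\sigma_v)_v)\mapsto(\hat\tau,\{\sigma_v\}_v)$ as the inverse of extraction-contraction: $\{\sigma_v\}_v$ is a spanning subforest of $\hat\tau$ and $\tau=\hat\tau/\{\sigma_v\}_v$. Pushing the double sum forward along this map, and using multiplicativity of $\alpha$ on $\mathcal{H}_{\text{\tiny{EC}}}$ to rewrite $\prod_v\alpha(\sigma_v)=\alpha(\sigma_1\cdots\sigma_n)$, reproduces for each $\hat\tau$ exactly the evaluation of $(\alpha\otimes\beta)\Delta_{\text{\tiny{EC}}}\hat\tau$ dictated by \eqref{EC_coproduct}, yielding $B(\alpha\star_{\text{\tiny{EC}}}\beta,F,h)$.

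The main obstacle I expect is the symmetry-factor bookkeeping: the recursive expansion produces labelled assignments $v\mapsto\sigma_v$, whereas the coproduct $\Delta_{\text{\tiny{EC}}}$ sums over unlabelled spanning subforests of $\hat\tau$. One has to verify the identity relating $S(\tau)$, the $S(\sigma_v)$'s and $S(\hat\tau)$, together with the multiplicity of a given spanning subforest under the automorphism group of $\hat\tau$, so that the weights assemble coherently. This is the classical substitution identity established in~\cite{CHV07} and reviewed in~\cite{CHV10}; once it is in place, the assembly above yields the claimed equality.
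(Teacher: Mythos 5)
There is no proof of this statement in the paper to compare against: Theorem \ref{comp_B_series} is recalled there as a classical result and attributed to \cite{CHV05,CHV07,CEM}. Your sketch follows the standard route for proving it (expand $\tilde F=\tfrac1h B(\alpha,F,h)$, expand each elementary differential $\tilde F[\tau]$ multilinearly, reindex the double sum by pairs consisting of a tree $\hat\tau$ and a spanning subforest), and the bookkeeping of the powers of $h$ and of the characters $\alpha,\beta$ is oriented correctly with respect to \eqref{EC_coproduct}.

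However, the central combinatorial step is wrong as you state it. Expanding $\tilde F[\tau]$ at a node $v$ with $n_v$ children means applying $(F[\sigma_v])^{(n_v)}$ to the already expanded child differentials, and the Leibniz rule yields a sum over all ways of attaching each (blown-up) child branch to an \emph{arbitrary} vertex of $\sigma_v$, each branch being attached \emph{via the root of its own piece} $\sigma_w$; you assert the opposite, namely that the child branches are grafted onto the root of $\sigma_v$. With your rule one only produces spanning subforests of $\hat\tau$ in which every inter-piece edge leaves the root of the parent piece: already for $\hat\tau$ the path on three vertices and the subforest consisting of the two-vertex piece at the root plus the top vertex, the inter-piece edge leaves the non-root vertex of the parent piece, so this term of $\Delta_{\text{\tiny{EC}}}\hat\tau$ would never be generated (your rule would produce the cherry instead), and the claimed identity with $(\alpha\otimes\beta)\Delta_{\text{\tiny{EC}}}$ fails. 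Relatedly, the map $(\tau,(\sigma_v)_v)\mapsto\hat\tau$ is not well defined without recording the attachment points; the correct correspondence is between triples (contracted tree, pieces, attachment data) and pairs ($\hat\tau$, spanning subforest). Finally, the multiplicity\slash symmetry-factor identity that you flag as the main obstacle is the actual heart of the proof, and you dispose of it by citing \cite{CHV07,CHV10}, i.e.\ the very works that establish the theorem; so even after correcting the attachment rule the proposal is not a self-contained proof.
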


It has been observed in \cite{CEM} that the  two coproducts $\Delta_{\text{\tiny{BCK}}}$ and  $\Delta_{\text{\tiny{EC}}}$ are in co-interaction which means that they satisfy the following identity:
	\begin{equs} \label{cointeraction_BCK_EC}
		\mathcal{M}^{(13)(2)(4)}\left(  \Delta_{\tiny{\text{EC}}} \otimes  \Delta_{\tiny{\text{EC}}}\right)
		\Delta_{\tiny{\text{BCK}}} = \left( \id \otimes \Delta_{\tiny{\text{BCK}}} \right) \Delta_{\tiny{\text{EC}}}, 
	\end{equs}
	where for $ \tau_1, \tau_2, \tau_3, \tau_4 \in \CF $
	\begin{equs}
		\mathcal{M}^{(13)(2)(4)}\left( \tau_1 \otimes \tau_2 \otimes  \tau_3 
		\otimes \tau_4 \right) =  \tau_1 \cdot  \tau_3 \otimes 
		\tau_2 \otimes \tau_4.
	\end{equs}
Here $ \mathcal{F} $ denotes the linear span of forests. 
At a character level, the previous identity is
\begin{equs}
	(\beta \star_{\tiny{\text{EC}}}	\alpha_1) \star_{\tiny{\text{BCK}}} (\beta \star_{\tiny{\text{EC}}} \alpha_2) =  \beta \star_{\tiny{\text{EC}}} ( \alpha_1 \star_{\tiny{\text{BCK}}}  \alpha_2) 
\end{equs}
where $ \beta, \alpha_1,\alpha_2 $ are characters on $ \CF $.
\begin{remark} \label{remark_rp}
One can define B-series for more than one non-linearity and consider a finite  family $ (F_a)_{a \in A}$  of those. Here, $ A $ is a finite set. This formalism is useful when one works with systems. 
\begin{equs}\label{eq: ode}
	\partial_t y_a = F_a({\bf y}), \quad y_a(0) = y_{a,0} \in \mathbb{R}\;,
\end{equs}	
where now $ {\bf y} = (y_a)_{a \in A} $ and the non-linearities $ F_a $ depend on these variables. In terms of notation, the B-series are denoted by 
\begin{equs}
	a \mapsto	B(\alpha,F,a,h) = \alpha(\mathbbm{1}) \id + \sum_{\tau \in \mathbf{T}_a } h^{|\tau|} \alpha(\tau) \frac{F[\tau]}{S(\tau)} 
\end{equs}
where now $ \mathbf{T}_a  $ are vertex-decorated trees whose root is decorated by $ a $. The definition of the symmetry factor and the elementary differential can be extended to this new case. For example, one has
\begin{equs}
		F[\mathcal{B}^a_+(\tau_1 \cdots \tau_n)](y)
	=
	F_a^{(n)}(y)
	\prod_{j=1}^{n} F\big[
	\tau_j
	\big](y)\;
	\end{equs}
Here, $ \mathcal{B}^a_+ $ is defined similarly as $ \mathcal{B}_+ $. Now the new root created is decorated by $a$.
 These B-series can be viewed as  functions on the parameter $a$. Another source of complexity can come from adding more drivers or noises which is the case in the context of Rough Paths theory where this formalism of B-series has been extensively used.
In~\cite{Gub06}, the author introduced similar tree expansions instead of words (see \cite{Lyons98,Gubinelli2004}) for solving rough differential equations of the form
\begin{equation}
	\label{eq:CODE}
	dY_t = \sum_{\mfl=0}^m F_{\mfl}(Y_t)dX^{\mfl}_t,
\end{equation}
where the $F_\mfl$ are vector fields on $\mathbb{R}^{d}$ and $X:[0,T] \rightarrow \mathbb{R}^{m+1}$ is a driving signal. Non-noisy term can be encoded by using the convention that $X_{0}(t) = t$. 
The solution to~\eqref{eq:CODE} is again given by a tree expansion
\begin{equation}
	\label{eq:CODEsolution}
	Y_t =  Y_s + \sum_{\tau \in \mathbf{T}_A} \frac{F[\tau](Y_s)}{S(\tau)} X^{\tau}_{st}\;,
\end{equation}
where $ \mathbf{T}_A  $ are vertex-decorated trees whose vertices decorations belong to $A = \{0,\ldots,m\}$. 
In the sequel, we will proceed similarly to these two extensions by looking at decorated trees with decorations on both the edges and the vertices that will encode both systems and multiple drivers or noises. 
The composition rule for these B-series will be different as now one has to consider space-time monomials. These monomials appear inside a decorated tree via decorations on the nodes.   
\end{remark}

\section{Algebraic structures on decorated trees}
\label{sec::3}

Decorated trees as introduced in
\cite{BHZ} are described in the following way. We suppose given two finite sets $  \mathfrak{L}_{+} $ and $ \mathfrak{L}_{-} $. The main motivation behind these sets is to encode a system of (stochastic) partial differential equations that are given by:
\begin{equs} \label{main_equation}
	\partial_t u_{\mathfrak{t}} - \mathcal{L}_{\mathfrak{t}} u_{\mathfrak{t}}  = \sum_{\mathfrak{l} \in \Lab_-} F_{\mathfrak{t}}^{\mathfrak{l}}({\bf{u}}) \xi_{\mathfrak{l}}, \quad \mathfrak{t} \in \Lab_+
\end{equs}
where the $ \xi_{\mathfrak{l}} $ are space-time noises, $ {\bf{u}} $ is the collection of the $ u_{\mathfrak{t}} $ and some of their derivatives. The terms  $ \mathcal{L}_{\mathfrak{t}} $ are differential operators and one can rewrite \eqref{main_equation} into a mild form given by
\begin{equs} \label{mild_formulation}
 u_{\mathfrak{t}}   = K_{\mathfrak{t}} * \left(  \sum_{\mathfrak{l} \in \Lab_-} F_{\mathfrak{t}}^{\mathfrak{l}}({\bf{u}}) \xi_{\mathfrak{l}} \right), \quad \mathfrak{t} \in \Lab_+
\end{equs}
where the $ K_{\mathfrak{t}} $ are the kernels associated to the operators $ (\partial_t - \mathcal{L}_{\mathfrak{t}}) $ and $ * $ is the space-time convolution. Therefore, on our decorated trees, we set  $ \mathcal{D} := \Lab \times \mathbb{N}^{d+1}$ where $ \Lab = \Lab_+ \cup \Lab_- $ to be the set of edge decorations. These symbols represent a convolution with the kernels $ D^m K_{\mathfrak{t}} $ ($D^m$ are derivatives) for $ (\mathfrak{t},m) \in \Lab_+ \times \mathbb{N}^{d+1} $ and a noise term $ \xi_{\mathfrak{l}}  $ for $ (\mathfrak{l},m) \in \Lab_- \times \mathbb{N}^{d+1}  $.  We suppose that $0 \in \Lab_-$ and $ \xi_0 = 1 $.  Decorated trees over $ \mathcal{D} $ are of the form  $T_{\Labe}^{\Labn} =  (T,\Labn,\Labe) $ where $T$ is a non-planar rooted tree with node set $N_T$, edge set $E_T$ and root $ \varrho_T $. We suppose that the edges decorated by $ (\mathfrak{l},m) \in \Lab_- \times \mathbb{N}^{d+1}$    are terminal edges which means that one of their extremities is a leaf. Also from every node, only one edge with such a decoration can be connected to this node. We assume that $ m=0 $ and the node decoration of that leaf is zero. The maps $\Labn : N_T \rightarrow \mathbb{N}^{d+1}$ and $\Labe : E_T \rightarrow \mathcal{D}$ are node, respectively edge, decorations.
The tree product is defined by 
\begin{equation*} 
	(T,\Labn,\Labe) \cdot  (T',\Labn',\Labe') 
	= (T \cdot T',\Labn + \Labn', \Labe + \Labe')\;, 
\end{equation*} 
where $T \cdot T'$ is the rooted tree obtained by identifying the roots of $ T$ and $T'$. The sums $ \Labn + \Labn'$ mean that decorations are added at the root and extended to the disjoint union by setting them to vanish on the other tree.  
We make the connection with  symbolic notation introduced in the previous part.

\begin{enumerate}
	\item[--] An edge decorated by  $ a = (\mathfrak{t},m) \in \mathcal{D} $ with $ \mathfrak{t} \in \Lab_+ $  is denoted by $ \CI_{a} $. The symbol $  \CI_{a} $ is also viewed as  the operation that grafts a tree onto a new root via a new edge with edge decoration $ a $. The new root at hand remains decorated with $0$. 
	
	\item[--]  An edge decorated by $ (\mathfrak{l},0) \in \mathcal{D} $ with $ \mathfrak{l} \in \Lab_- $ is denoted by $  \Xi_{\mathfrak{l}} $.
	
	\item[--] A factor $ X^{\ell}$   encodes a single node  $ \bullet^{\ell} $ decorated by $ \ell \in \mathbb{N}^{d+1}$. We write $ X_i$, $ i \in \lbrace 0,1,\ldots,d\rbrace $, to denote $ X^{e_i}$. Here, $ e_i $ corresponds to the vector of $ \mathbb{N}^{d+1} $ with $ 1 $ in $i$th position and $ 0 $ otherwise. The elements $ X^0 $ and $\Xi_0$ are identified with $ \mathbf{1} $ the tree with one node and no decoration.
\end{enumerate}
Using this symbolic notation, given a decorated tree $ \tau $ there exist decorated trees $ \tau_1, ..., \tau_r $ such that
\begin{equs}
	\tau = X^{\ell}  \Xi_{\mathfrak{l}}  \prod_{i=1}^r \CI_{a_i}(\tau_i)
\end{equs}
where $ \prod_i $ is the tree product, $ \ell \in \mathbb{N}^{d+1} $, $ m, r \in \mathbb{N} $. A tree of the form $ \CI_a(\tau) $ is called a planted tree as there is only one edge connecting the root to the rest of the tree.
Below, we present an example of a decorated tree with its associated iterated integral coming from \eqref{mild_formulation}:
\begin{equs} 
	\tau = X^{\alpha} \Xi_{\mathfrak{l}_1} \CI_a(X^{\beta} \Xi_{\mathfrak{l}_2})  =   \begin{tikzpicture}[scale=0.2,baseline=0.1cm]
		\node at (0,0)  [dot,label= {[label distance=-0.2em]below: \scriptsize  $  \alpha   $} ] (root) {};
		\node at (2,4)  [dot,label={[label distance=-0.2em]right: \scriptsize  $ \beta $}] (right) {};
			\node at (0,8)  [dot,label={[label distance=-0.2em]right: \scriptsize  $  $}] (rightc) {};
		\node at (-2,4)  [dot,label={[label distance=-0.2em]above: \scriptsize  $ $} ] (left) {};
		\draw[kernel1] (right) to
		node [sloped,below] {\small }     (root);
		\draw[kernel1] (right) to
		node [sloped,below] {\small }     (rightc); \draw[kernel1] (left) to
		node [sloped,below] {\small }     (root);
			\node at (1,6) [fill=white,label={[label distance=0em]center: \scriptsize  $ \Xi_{\mathfrak{l}_2} $} ] () {};
		\node at (-1,2) [fill=white,label={[label distance=0em]center: \scriptsize  $ \Xi_{\mathfrak{l}_1} $} ] () {};
		\node at (1,2) [fill=white,label={[label distance=0em]center: \scriptsize  $ a $} ] () {};
	\end{tikzpicture} \equiv X^{\alpha} \xi_{\mathfrak{l}_1}  D^m K_{\mathfrak{t}} * \left( X^{\beta} \xi_{\mathfrak{l}_2} \right)
\end{equs}
where $ X^{\alpha} $ in the previous iterated integral denotes the polynomial function defined on $ \mathbb{R}^{d+1} $ by $ x \in \mathbb{R}^{d+1} \mapsto x^{\alpha} = \prod_{i=0}^d x_i^{\alpha_i} $. One has
$ a = (\mathfrak{t},m) $ and we have put $ \Xi_{\mathfrak{l}_i}$ on the edge to specify that it is decorated by $ (\mathfrak{l}_i,0) $ and to highlight noise type edges. Nodes without decoration mean that their decoration is equal to zero. Pointwise products at the level of the iterated integrals correspond to tree product for decorated trees. We denote the set of decorated trees by $ \mfT $ and its linear span by $ \CT $. We consider the space $ \CT_+ $ which is the linear span of
\begin{equs}
\mathfrak{T}_+ =	\lbrace X^k  \prod_{i=1}^r \CI_{a_i}(\tau_i), \, \tau_i \in \mfT \rbrace.
\end{equs}
The main difference is that we do not have a noise $\Xi_{\mathfrak{l}}$ at the root. The space $\CT_+$ is not a subspace of $\CT$. One has
\begin{equs}
	X^k  \prod_{i=1}^r \CI_{a_i}(\tau_i) \in\CT_+, \quad X^k \Xi_0 \prod_{i=1}^r \CI_{a_i}(\tau_i) \in\CT,
\end{equs}
with the $\tau_i$ belonging to $\mathfrak{T}$.
The space $ \CT_+ $ can be viewed as the symmetric space over terms of the form $ \CI_a(\tau) $ and $ X_i $.  In the sequel, we will consider also a last space denoted by $ \CT_- $ which is the linear span of forests composed of decorated trees in $ \mfT $. The empty forest is denoted by $ \mathbbm{1} $ and the forest product by $\cdot$. For $\bar{\tau} \in F$, we make the following identification: $ \bar{\tau} \cdot \one = \bar{\tau} $. 
We define an inner product on decorated trees as 
\begin{equs}
	\big\langle \sigma, \tau \big\rangle = \delta_{\sigma,\tau} S(\tau). 
\end{equs}
The symmetry factor of the decorated tree $ \tau $ denoted by $ S(\tau) $ is given for
\begin{equs}
	\tau = X^k \Xi_{\mathfrak{l}} \prod_{i=1}^n (\CI_{a_i}(\tau_i))^{m_{i}} 
\end{equs}
where the $ \CI_{a_i}(\tau_i) $ are disjoint, by
\begin{equs} \label{symmetry_factor}
	S(\tau) = (k!) \prod_{i=1}^n (m_i !) S(\tau_i)^{m_i}
\end{equs}

To performe expansions with decorated trees, one has to introduce various combinatorial operations on them. We start by considering a family of products $(\widehat{\curvearrowright}^a)_{a \in \CD_+}$ where $ \CD_+ = \Lab_+ \times \mathbb{N}^{d+1} $  defined by
\begin{equs} \label{deformed_grafting_a}
	\sigma \widehat{\curvearrowright}^a \tau:=\sum_{v\in N_{\tau}}\sum_{\ell\in\mathbb{N}^{d+1}}{\Labn_v \choose \ell} \sigma  \curvearrowright_v^{a-\ell}(\uparrow_v^{-\ell} \tau),
\end{equs}
where $ \Labn_v \in \mathbb{N}^{d+1}$ denotes  the decoration at the vertex $ v $. For $ a =(\mathfrak{t},m) $, one has  $ a-\ell = (\mathfrak{t},m-\ell) $. If $ m-\ell $ does not take value in $ \mathbb{N}^{d+1} $, then the decorated tree with such a decoration is set to be zero. The operator $ \uparrow_v^{-\ell} $ is defined  as subtracting $ \ell $ from the node decoration of $ v $, $\sigma \curvearrowright^a_v \tau$ is obtained by grafting the tree $\sigma$ onto the tree $\tau$ at vertex $v$ by means of a new edge decorated by $a\in \CD_+$. Grafting onto a leaf connected to the rest of the tree via a noise-type edge, that is an edge decorated by $ (\Xi,0) $, is forbidden.
The family of grafting products $ (\widehat{\curvearrowright}^a  )_{a \in \CD_+} $ forms a multi-pre-Lie algebra, in the sense that they satisfy the following identities:
\begin{equs}
	\left( \tau_1  \widehat{\curvearrowright}^a \tau_2 \right)  \widehat{\curvearrowright}^b \tau_3 - 	 \tau_1  \widehat{\curvearrowright}^a (  \tau_2  \widehat{\curvearrowright}^b \tau_3 ) = 	\left( \tau_2  \widehat{\curvearrowright}^b \tau_1 \right)  \widehat{\curvearrowright}^a \tau_3 - 	 \tau_2  \widehat{\curvearrowright}^b (  \tau_1  \widehat{\curvearrowright}^a \tau_3 )
\end{equs}
where the $ \tau_i $ are decorated trees and $ a,b $ belong to $ \CD_+ $.
This is an extension of the notion of a pre-Lie product (which is recovered when the family is reduced to one element) and was first introduced in \cite[Prop. 4.21]{BCCH}.
We use the short hand notation 
\begin{equs}
	\CI_{a}(\sigma) \, \widehat{\curvearrowright} \, \tau:= \sigma \, \widehat{\curvearrowright}^{a} \, \tau.
\end{equs}
We extend the product $ \widehat{\curvearrowright}  $ to the simultaneous grafting of $ \prod_{i=1}^n  \mathcal{I}_{a_i}(\sigma_i) $ onto $ \tau $, denoted by $\prod_{i=1}^n  \mathcal{I}_{a_i}(\sigma_i) \, \widehat{\curvearrowright} \, \tau$. It means that the $\sigma_i$ are simultaneous grafted onto $ \tau $ via the product $ \widehat{\curvearrowright}^{a_i} $.
Below, we provide an example illustrating the product $ \widehat{\curvearrowright} $:
\begin{equs} 
\begin{tikzpicture}[scale=0.2,baseline=0.1cm]
	\node at (0,0)  [dot,label= {[label distance=-0.2em]below: \scriptsize  $     $} ] (root) {};
	\node at (0,8)  [dot,label= {[label distance=-0.2em]above: \scriptsize  $    $} ] (center) {};
	\node at (2,4)  [dot,label={[label distance=-0.2em]right: \scriptsize  $ \alpha $}] (right) {};
	\draw[kernel1] (right) to
	node [sloped,below] {\small }     (root);
	\draw[kernel1] (center) to
	node [sloped,below] {\small }     (right); 
	\node at (1,2) [fill=white,label={[label distance=0em]center: \scriptsize  $ a $} ] () {};
	\node at (1,6) [fill=white,label={[label distance=0em]center: \scriptsize  $ \Xi_{\mathfrak{l}_1} $} ] () {};
\end{tikzpicture}  	 \widehat{\curvearrowright} \begin{tikzpicture}[scale=0.2,baseline=0.1cm]
		\node at (0,0)  [dot,label= {[label distance=-0.2em]right: \scriptsize  $   \gamma  $} ] (root) {};
		\node at (0,8)  [dot,label= {[label distance=-0.2em]above: \scriptsize  $    $} ] (center) {};
		\node at (2,4)  [dot,label={[label distance=-0.2em]right: \scriptsize  $ \beta $}] (right) {};
		\draw[kernel1] (right) to
		node [sloped,below] {\small }     (root);
		\draw[kernel1] (center) to
		node [sloped,below] {\small }     (right); 
		\node at (1,2) [fill=white,label={[label distance=0em]center: \scriptsize  $ b $} ] () {};
		\node at (1,6) [fill=white,label={[label distance=0em]center: \scriptsize  $ \Xi_{\mathfrak{l}_2} $} ] () {};
	\end{tikzpicture}   = \sum_{\ell \in \mathbb{N}^{d+1} }{\gamma \choose \ell} \begin{tikzpicture}[scale=0.2,baseline=0.1cm]
	\node at (0,0)  [dot,label= {[label distance=-0.2em]below: \scriptsize  $  \gamma - \ell   $} ] (root) {};
	\node at (-3,4)  [dot,label= {[label distance=-0.2em]right: \scriptsize  $  \alpha   $} ] (center) {};
	\node at (-3,8)  [dot,label= {[label distance=-0.2em]above: \scriptsize  $    $} ] (centerc) {};
	\node at (3,4)  [dot,label={[label distance=-0.2em]right: \scriptsize  $ \beta $}] (right) {};
	\node at (3,8)  [dot,label={[label distance=-0.2em]above: \scriptsize  $ $} ] (rightc) {};
	\draw[kernel1] (right) to
	node [sloped,below] {\small }     (root); 
	\draw[kernel1] (center) to
	node [sloped,below] {\small }     (root);
	\draw[kernel1] (center) to
	node [sloped,below] {\small }     (centerc);
	\draw[kernel1] (right) to
	node [sloped,below] {\small }     (rightc); 
	\node at (3,6) [fill=white,label={[label distance=0em]center: \scriptsize  $ \Xi_{\mathfrak{l}_2} $} ] () {};
	\node at (-3,6) [fill=white,label={[label distance=0em]center: \scriptsize  $ \Xi_{\mathfrak{l}_1} $} ] () {};
	\node at (1.5,2) [fill=white,label={[label distance=0em]center: \scriptsize  $ b $} ] () {};
	\node at (-1.5,2) [fill=white,label={[label distance=0em]center: \scriptsize  $ a - \ell  $} ] () {};
\end{tikzpicture} +  \sum_{\ell \in \mathbb{N}^{d+1} }{\beta \choose \ell} \, \, \begin{tikzpicture}[scale=0.2,baseline=0.1cm]
		\node at (0,0)  [dot,label= {[label distance=-0.2em]right: \scriptsize  $  \gamma   $} ] (root) {};
		\node at (0,8)  [dot,label= {[label distance=-0.2em]above: \scriptsize  $    $} ] (center) {};
		\node at (4,8)  [dot,label= {[label distance=-0.2em]right: \scriptsize  $ \alpha   $} ] (centerl) {};
		\node at (2,4)  [dot,label={[label distance=-0.2em]right: \scriptsize  $ \beta - \ell $}] (right) {};
			\node at (2,12)  [dot,label= {[label distance=-0.2em]right: \scriptsize  $    $} ] (centerlc) {};
			\draw[kernel1] (centerl) to
			node [sloped,below] {\small }     (centerlc);
		\draw[kernel1] (right) to
		node [sloped,below] {\small }     (root);
		\draw[kernel1] (center) to
		node [sloped,below] {\small }     (right); 
		\draw[kernel1] (centerl) to
		node [sloped,below] {\small }     (right); 
		\node at (1,2) [fill=white,label={[label distance=0em]center: \scriptsize  $ b $} ] () {};
		\node at (3,6) [fill=white,label={[label distance=0em]center: \scriptsize  $ \qquad a - \ell  $} ] () {};
	\node at (1,6) [fill=white,label={[label distance=0em]center: \scriptsize  $ \Xi_{\mathfrak{l}_2} $} ] () {};
	\node at (3,10) [fill=white,label={[label distance=0em]center: \scriptsize  $ \Xi_{\mathfrak{l}_1} $} ] (rightrr) {};
	\end{tikzpicture}.
\end{equs}
Another important operation we will need to define on the space of trees is $ \uparrow^{i} $:
\begin{equs} \label{increasing_node}
	\uparrow^{i} \tau  = \sum_{v \in N_{\tau}} \uparrow^{e_i}_v \tau.  
\end{equs}
Here, the insertion does not happen on top of noise-type edges. We provide below an example of computation:
\begin{equs} 
	\uparrow^{i} \begin{tikzpicture}[scale=0.2,baseline=0.1cm]
		\node at (-2,4)  [dot,label= {[label distance=-0.2em]above: \scriptsize  $    $} ] (center) {};
		\node at (2,4)  [dot,label= {[label distance=-0.2em]right: \scriptsize  $ \alpha   $} ] (centerl) {};
		\node at (0,0)  [dot,label={[label distance=-0.2em]right: \scriptsize  $ \beta $}] (right) {};
		\node at (0,8)  [dot,label= {[label distance=-0.2em]right: \scriptsize  $    $} ] (centerlc) {};
		\draw[kernel1] (centerl) to
		node [sloped,below] {\small }     (centerlc);
		\draw[kernel1] (center) to
		node [sloped,below] {\small }     (right); 
		\draw[kernel1] (centerl) to
		node [sloped,below] {\small }     (right); 
		\node at (1,2) [fill=white,label={[label distance=0em]center: \scriptsize  $ a $} ] () {};
		\node at (-1,2) [fill=white,label={[label distance=0em]center: \scriptsize  $ \Xi_{\mathfrak{l}_2} $} ] () {};
		\node at (1,6) [fill=white,label={[label distance=0em]center: \scriptsize  $ \Xi_{\mathfrak{l}_1} $} ] (rightrr) {};
	\end{tikzpicture}=  \begin{tikzpicture}[scale=0.2,baseline=0.1cm]
	\node at (-2,4)  [dot,label= {[label distance=-0.2em]above: \scriptsize  $    $} ] (center) {};
	\node at (2,4)  [dot,label= {[label distance=-0.2em]right: \scriptsize  $ \alpha   $} ] (centerl) {};
	\node at (0,0)  [dot,label={[label distance=-0.2em]right: \scriptsize  $ \beta + e_i $}] (right) {};
	\node at (0,8)  [dot,label= {[label distance=-0.2em]right: \scriptsize  $    $} ] (centerlc) {};
	\draw[kernel1] (centerl) to
	node [sloped,below] {\small }     (centerlc);
	\draw[kernel1] (center) to
	node [sloped,below] {\small }     (right); 
	\draw[kernel1] (centerl) to
	node [sloped,below] {\small }     (right); 
	\node at (1,2) [fill=white,label={[label distance=0em]center: \scriptsize  $ a $} ] () {};
	\node at (-1,2) [fill=white,label={[label distance=0em]center: \scriptsize  $ \Xi_{\mathfrak{l}_2} $} ] () {};
	\node at (1,6) [fill=white,label={[label distance=0em]center: \scriptsize  $ \Xi_{\mathfrak{l}_1} $} ] (rightrr) {};
\end{tikzpicture} +  \begin{tikzpicture}[scale=0.2,baseline=0.1cm]
\node at (-2,4)  [dot,label= {[label distance=-0.2em]above: \scriptsize  $    $} ] (center) {};
\node at (2,4)  [dot,label= {[label distance=-0.2em]right: \scriptsize  $ \alpha + e_i   $} ] (centerl) {};
\node at (0,0)  [dot,label={[label distance=-0.2em]right: \scriptsize  $ \beta $}] (right) {};
\node at (0,8)  [dot,label= {[label distance=-0.2em]right: \scriptsize  $    $} ] (centerlc) {};
\draw[kernel1] (centerl) to
node [sloped,below] {\small }     (centerlc);
\draw[kernel1] (center) to
node [sloped,below] {\small }     (right); 
\draw[kernel1] (centerl) to
node [sloped,below] {\small }     (right); 
\node at (1,2) [fill=white,label={[label distance=0em]center: \scriptsize  $ a $} ] () {};
\node at (-1,2) [fill=white,label={[label distance=0em]center: \scriptsize  $ \Xi_{\mathfrak{l}_2} $} ] () {};
\node at (1,6) [fill=white,label={[label distance=0em]center: \scriptsize  $ \Xi_{\mathfrak{l}_1} $} ] (rightrr) {};
\end{tikzpicture}.
\end{equs}
	Let $\sigma =  X^{k}  \prod_i \CI_{a_{i}}(\sigma_{i}) $, we define a product $ \star_2 $ given by
\begin{equs} \label{def_star_2}
	\sigma \star_{2} \tau = \tilde{\uparrow}^{k}_{N_{\tau}} \( \prod_i \CI_{a_{i}}(\sigma_{i}) \, \widehat{\curvearrowright} \, \tau \)
\end{equs}
where $ \tilde{\uparrow}^{k}_{N_{ \tau}} $ is given by
\begin{equs} \label{splitting_polynomials}
	\tilde{\uparrow}^{k}_{N_{\tau}} =
	\sum_{k = \sum_{v \in N_{\tau}} k_v} \uparrow^{k_v}_{v}.
\end{equs}
When the previous operation  is happening on all the nodes and not only the set $ N_{\tau} $, we denote this operator by $ \tilde{\uparrow}^{k} $.  Above the  product $\star_2$ is defined for $\sigma \in \CT_+$ and $\tau \in  \CT$ and takes values in $\CT$. One can restrict the second argument $\tau$ to be in $\CT_+$ and get a product on $\CT_+$. 

\begin{proposition} \label{associa_star_2}
	The product $ \star_2 $ is associative on $\CT_+$.
	\end{proposition}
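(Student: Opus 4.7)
The cleanest route is to identify $\star_2$ as the dual, under the inner product $\langle\sigma,\tau\rangle = \delta_{\sigma,\tau} S(\tau)$, of the coproduct $\Delta_2$ of \cite{BHZ}, whose coassociativity is already established there. Once the identification is checked on generators, associativity of $\star_2$ follows immediately from coassociativity of $\Delta_2$. I would state this identification as the main observation and reduce to it.

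For a self-contained proof, the plan is to argue by induction on the size of the leftmost factor. Write $\sigma_1 = X^{k_1}\prod_i \mathcal{I}_{a_i}(\sigma_{1,i})$ and $\sigma_2 = X^{k_2}\prod_j \mathcal{I}_{b_j}(\sigma_{2,j})$. Expanding $(\sigma_1 \star_2 \sigma_2)\star_2 \sigma_3$ and $\sigma_1 \star_2 (\sigma_2 \star_2 \sigma_3)$ by the definition, the product decouples into a grafting piece and a polynomial-dispatch piece. The grafting piece is governed by the simultaneous grafting of planted forests onto a tree; the multi-pre-Lie identity stated above the definition of $\star_2$ is exactly the compatibility needed for the Guin--Oudom construction to promote the family $(\widehat{\curvearrowright}^a)_{a\in \CD_+}$ to an associative product on the symmetric algebra over planted trees. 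Applying this to the concatenated forest $\prod_i\mathcal{I}_{a_i}(\sigma_{1,i})\cdot\prod_j\mathcal{I}_{b_j}(\sigma_{2,j})$ shows that the pure grafting components of the two triple products agree.

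It remains to handle the polynomial factor $X^{k_1+k_2}$. The operator $\tilde{\uparrow}^{k}_{N_\tau}$ dispatches a monomial $X^k$ across the nodes of $\tau$ as a sum over compositions $k = \sum_v k_v$, and the binomial identity $\binom{k_1+k_2}{k_v} = \sum_{k_v = k_v^{(1)}+k_v^{(2)}}\binom{k_1}{k_v^{(1)}}\binom{k_2}{k_v^{(2)}}$ shows that dispatching $X^{k_1+k_2}$ over $N_{\sigma_3}$ is the same as first dispatching $X^{k_2}$, then dispatching $X^{k_1}$ across the (possibly enlarged, after grafting) node set. The subtle point is that grafting with $\widehat{\curvearrowright}^{a-\ell}$ subtracts $\ell$ from the decoration at the grafting site via $\binom{\mathfrak{n}_v}{\ell}$; when $\sigma_1$ is grafted after the dispatch from $\sigma_2$, the relevant Vandermonde-type identity $\sum_{\ell'}\binom{\mathfrak{n}_v + k_v^{(2)}}{\ell}\binom{k_v^{(2)}}{\ell'}$ reassembles into the expected coefficient from the direct expansion.

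The main obstacle is precisely this bookkeeping between the Taylor coefficients $\binom{\mathfrak{n}_v}{\ell}$ built into $\widehat{\curvearrowright}^{a}$ and the polynomial dispatch $\uparrow^{k_v}_v$: grafting produces new nodes whose decorations shift, and subsequent dispatch must correctly distribute powers over those shifted nodes. Once the Vandermonde identity is invoked, all terms on both sides are in bijection and carry matching coefficients, yielding associativity. This is also where the restriction to $\CT_+$ in the second slot (or, equivalently, the restriction forbidding grafting above noise-type edges) is used: it ensures that every node created by the grafting is a legitimate grafting site for the next step, so the inductive step does not leave the space.
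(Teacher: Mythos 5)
Your first route is correct, and it is essentially the paper's own toolkit rearranged: the paper justifies Proposition~\ref{associa_star_2} by invoking the Guin--Oudom procedure applied to the pre-Lie ``plugging'' product of \cite{BM22} (equivalently the post-Lie product of \cite{BK}, via \cite{ELM}), and only afterwards records, again from \cite{BM22}, that $\star_2$ is dual to $\Delta_2$. You invert this order: take the duality as the main identification and import coassociativity of $\Delta_2$ from \cite{BHZ}. That is a legitimate and genuinely different reduction; what it buys is that all the combinatorics is outsourced to the coassociativity proof in \cite{BHZ}, at the price of two small checks that you should make explicit: the pairing $\langle\sigma,\tau\rangle=\delta_{\sigma,\tau}S(\tau)$ is nondegenerate (diagonal with nonzero entries), and the completed tensor product $\hattimes$ causes no harm because $\Delta_2$ is triangular for the bigrading, so pairing against a fixed tree only ever involves finitely many terms. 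With these remarks, coassociativity of $\Delta_2:\CT_+\to\CT_+\hattimes\CT_+$ transfers to associativity of $\star_2$ on $\CT_+$, which is exactly the statement.

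The self-contained sketch, by contrast, has a real soft spot at the place you yourself flag. The claim that the triple products ``decouple into a grafting piece and a polynomial-dispatch piece'', with the grafting components matched by Guin--Oudom applied to the multi-pre-Lie family alone, does not hold as stated: since the deformed grafting carries the coefficients $\binom{\mathfrak{n}_v}{\ell}$, grafting and the operators $\uparrow^{i}$ do not commute, cf.\ \eqref{non_commutation}, whose right-hand side contains the correction term $-\,\sigma\,\widehat{\curvearrowright}^{a-e_i}\tau$; consequently the ``pure grafting components'' of the two sides are not separately equal, and the cross terms are precisely the content of the proposition. This is why the paper does not apply Guin--Oudom to the family $(\widehat{\curvearrowright}^a)$ by itself but to an enlarged structure in which the monomials act as well (the plugging pre-Lie product of \cite{BM22}, or the post-Lie product of \cite{BK}, for which \eqref{non_commutation} is the defining relation). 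Your displayed ``Vandermonde-type identity'' is also not an identity as written. None of this sinks the proposal, since the duality route is complete; but if you want the direct argument you should either verify the post-Lie axioms and quote \cite{ELM}, or carry out the full binomial bookkeeping including the correction terms generated by \eqref{non_commutation}, rather than treating the grafting and dispatch pieces as independent.
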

The associativity of the product $ \star_2 $ can be proved using the Guin-Oudom procedure \cite{Guin1,Guin2} applied either on a pre-Lie product called plugging product in \cite{BM22} or a post-Lie product given in \cite{BK} (see \cite{ELM} for a first application of the Guin-Oudom procedure to post-Lie products).
The post-Lie product is given as a way to merge the two operations $ \widehat{\curvearrowright} $ and $ 	\uparrow^{i} $ taking into account the non-commutativity between them. Indeed, one has for all decorated trees $ \sigma, \tau $ and $ a \in \mathbb{N}^{d+1} $, $ i \in \lbrace 0,...,d \rbrace $:
	\begin{equs} \label{non_commutation}
		\uparrow^{i} \left( \sigma \widehat{\curvearrowright}^a \tau  \right) =    ( \uparrow^{i} \sigma ) \widehat{\curvearrowright}^a \, \tau + \sigma \widehat{\curvearrowright}^a \, ( \uparrow^{i} \tau)  - \sigma \widehat{\curvearrowright}^{a-e_i} \tau. 
	\end{equs}

 One of the main results in \cite{BM22} is to connect $ \star_2 $ with a coaction $ \Delta_2 : \CT \rightarrow   \CT_+ \hattimes  \CT$ defined in \cite{BHZ}. This coaction is given  recursively by
\begin{equs}
	\Delta_{2} X_i & = X_i \hattimes \one + \one \hattimes X_i, \quad \Delta_2 \Xi_{\mathfrak{l}} = \one \hattimes \Xi_{\mathfrak{l}} , 
	\\ \Delta_{2} \CI_a(\tau) & =  \left( \id \hattimes \CI_a \right)\Delta_{2} \tau + \sum_{\ell \in \N^{d+1}} \frac{1}{\ell !} \CI_{a + \ell}(\tau) \hattimes X^{\ell},
\end{equs} 
and extended multiplicatively for the tree product. It can also be viewed using the same definition as a coproduct from $ \CT_+ $ into $    \CT_+ \hattimes  \CT_+$.
 We have replaced the standard tensor product $ \otimes $ by $ \hattimes $ to make sense of the infinite sum over $ \ell $. Indeed, the map $ \Delta_2  $  is a triangular map for the bigrading  given in \cite{BHZ} by
\[ 
(|T_{\Labe}^{\Labn}|_{\text{bi}}) = ( |T_{\Labe}^{\Labn}|_{\text{grad},\s}, |N_T \setminus \lbrace \varrho_T \rbrace| + |E_T| ), 
\] 
where $ T^{\Labn}_{\Labe} $ is a rooted tree and $ s:=(s_0,\ldots, s_{d})\in \mathbb{N}^{d+1}$ with $s_i > 0$ is fixed.
We define the grading $ |\cdot|_{\text{grad},\s} $ of a tree $ \tau $  by:
\begin{equation} \label{grading}
	|\tau|_{\text{grad},\s}:=\sum_{e\in E_{\tau}}\big|\Labe(e) \big|_{\s}
\end{equation}
where $ \big|\cdot \big|_{\s} $ is a map from $ \Lab $ into $ \mathbb{R} $ called degree extended as a map from $ \CD \times \mathbb{N}^{d+1} $ as
\begin{equs}
	\big| (\mathfrak{t},m) \big|_{\s} = 	\big| \mathfrak{t}\big|_{\s} - 	\big| m \big|_{\s} 
\end{equs}
where 	$ \big| m \big|_{\s} $ is given by 
\begin{equs}
	|m |_{\s}:=s_0 m_0+\cdots +s_{d}m_{d}.
\end{equs}
The values of $ |\cdot|_{\s} $ on $ \Lab_- $ give the space-time regularity of the noises whereas its values on $ \Lab_+ $ give the Schauder estimate, gain in regularity provided by the convolution with the kernels $ K_{\mathfrak{t}} $.
An important result of $ \cite{BM22} $ is the following:
\begin{proposition}
	The dual of the coproduct $ \Delta_2 $ is the product $ \star_2 $.
	\end{proposition}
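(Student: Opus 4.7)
The plan is to verify the duality on decorated tree basis elements: for $\sigma \in \mathfrak{T}_+$, $\tau \in \mathfrak{T}$ and $\mu \in \mathfrak{T}$,
\[
\langle \sigma \star_2 \tau, \mu \rangle = \langle \sigma \otimes \tau, \Delta_2 \mu \rangle,
\]
where the right-hand side uses the tensor product of the inner product $\langle \cdot, \cdot \rangle$ on $\CT_+$ and $\CT$. Both sides are finite sums once $\mu$ is fixed (the second component of the bigrading $|\cdot|_{\text{bi}}$ on $\mu$ bounds the complexity of the contributing pairs $(\sigma,\tau)$), so the completion $\hattimes$ raises no convergence issue. The argument proceeds by induction on the number of edges of $\mu$.

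Since $\Delta_2$ is multiplicative for the tree product, while $\star_2$ is built out of the simultaneous grafting $\widehat{\curvearrowright}$ on the planted factors of its first argument and the polynomial insertion $\tilde{\uparrow}^{k}_{N_\tau}$ on the $X^k$ prefactor, one may reduce to two generating situations. First, for $\sigma = X^k$, the operator $\tilde{\uparrow}^k_{N_\tau}$ distributes $k$ additively across the nodes of $\tau$, and this should pair with the portion of $\Delta_2 \mu$ produced by the rule $\Delta_2 X_i = X_i \hattimes \one + \one \hattimes X_i$ combined, along each edge $\CI_a$ encountered, with the polynomial piece $\frac{1}{\ell!}\CI_{a+\ell}(\cdot) \hattimes X^\ell$; the duality in this case is the classical binomial identity. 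Second, for $\sigma = \CI_a(\bar\sigma)$, one has $\sigma \star_2 \tau = \bar\sigma \,\widehat{\curvearrowright}^a\, \tau$, and the grafting sum over vertices $v \in N_\tau$ with the binomial factors $\binom{\Labn_v}{\ell} \uparrow_v^{-\ell}$ in \eqref{deformed_grafting_a} matches, term by term, the recursive contribution $\sum_\ell \frac{1}{\ell!}\CI_{a+\ell}(\cdot) \hattimes X^\ell$ in the definition of $\Delta_2\CI_a$ when tested against a $\mu$ obtained from some $\mu'$ by attaching a planted subtree at $v$.

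The main obstacle is the bookkeeping of symmetry factors together with the combinatorial coefficients. Since the inner product is weighted by $S(\mu)$ and each grafting site produces an automorphism factor through the newly created planted factor $\CI_a(\bar\sigma)$ when it appears multiple times in the result, one must verify that the $\ell!$ in the denominator of the $X^\ell$-term of $\Delta_2\CI_a$ exactly compensates the multinomial expansion coming from $\binom{\Labn_v}{\ell}$ and from the multiplicity count in $\tilde{\uparrow}^k$, and that the factorials in \eqref{symmetry_factor} match the count of graftings producing isomorphic decorated trees. The base case consists of trees without internal edges, namely $X^\ell$ and $X^\ell\,\Xi_{\mathfrak{l}}$, where both sides are computed directly from the recursive definitions. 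The inductive step on $\mu = \CI_a(\mu')$ uses the first line of the recursive formula for $\Delta_2$ to reduce to $\mu'$ after one grafting, while the case $\mu = \mu_1 \cdot \mu_2$ is handled by the multiplicativity of $\Delta_2$ together with the way $\widehat{\curvearrowright}$ acts on the collection of planted factors of $\sigma$; in both cases the identities \eqref{non_commutation} guarantee compatibility between the grafting and the polynomial insertion.
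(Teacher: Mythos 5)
The paper does not actually prove this proposition: it is imported verbatim from \cite{BM22}, so there is no in-paper argument to compare yours with; the reference establishes the duality by dualising the Guin--Oudom construction for the deformed multi-pre-Lie grafting, i.e.\ by exactly the kind of symmetry-factor bookkeeping you point to. Measured against that, your text is a plausible plan but not yet a proof, and it has two genuine gaps. First, the reduction ``one may reduce to two generating situations'' does not follow from the multiplicativity of $\Delta_2$: that multiplicativity is in the variable $\mu$ being coproduced, not in the first tensor leg, while the reduction you need is in the first argument $\sigma$ of $\star_2$. Knowing the pairing identity for $\sigma=X^k$ and $\sigma=\CI_a(\bar\sigma)$ does not formally give it for $\sigma=X^k\prod_i\CI_{a_i}(\sigma_i)$, because the simultaneous grafting in \eqref{def_star_2} is \emph{not} the iteration of single graftings and the symmetric product of planted generators differs from their iterated $\star_2$-product (e.g.\ $\CI_a(\sigma_1)\star_2\CI_b(\sigma_2)=\sigma_1\,\widehat{\curvearrowright}^a\,\CI_b(\sigma_2)$ contains graftings inside $\sigma_2$, not just the tree product $\CI_a(\sigma_1)\CI_b(\sigma_2)$). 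Passing from generators to general $\sigma$ is precisely the Guin--Oudom/PBW content of the statement, and it is the step your outline silently assumes; it must be handled either by a direct count over all simultaneous graftings with the $m_i!$ factors of \eqref{symmetry_factor}, or by an induction on the number of planted factors of $\sigma$ using the relation between the symmetric and the $\star_2$ products.

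Second, the coefficient verification that you repeatedly defer (``one must verify that the $\ell!$ \dots exactly compensates \dots'') is the entire substance of the proposition: matching the $\frac{1}{\ell!}$ in $\Delta_2\CI_a$, the binomials $\binom{\Labn_v}{\ell}$ in \eqref{deformed_grafting_a}, the partition sum defining $\tilde{\uparrow}^{k}_{N_\tau}$, and the factorials in $S(\cdot)$, including the multiplicity count when several isomorphic planted branches are created. Announcing this check is not performing it, so as written the argument establishes nothing beyond the (correct) formulation of the duality with the $S$-weighted pairing. A smaller point: your appeal to \eqref{non_commutation} in the inductive step is misplaced as stated, since in \eqref{def_star_2} the insertion operator is restricted to $N_\tau$, which is exactly what makes the composite well defined without commuting $\uparrow^{i}$ past $\widehat{\curvearrowright}$; if you decompose $\star_2$ into ``graft, then insert anywhere'' you would indeed need \eqref{non_commutation}, but then the bookkeeping changes and must be redone explicitly.
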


\begin{remark}
	One notices that it is nicer to work and manipulate the product $ \star_2 $ instead of $ \Delta_2 $ as it involves only finite sums and does not require the use of a bigrading.
	\end{remark}

\label{star_1}
We define a new coaction from $ \Delta_2 $ that we denote by $ \Delta_1 :  \CT \rightarrow \CT_- \hattimes \CT $ recursively defined as:
\begin{equs} \label{recursive_def_delta1}
	\Delta_1 =  
	\mathcal{M}^{(13)(2)} \left(   \Delta_{\circ}  \hattimes \id \right) \hat{\Delta}_2
\end{equs} 
where $ \hat{\Delta}_2 $ is defined from $\CT $ into $ \CT \hattimes \CT $ as the same as $\Delta_2$ except for $\Xi_{\mathfrak{l}}$:
\begin{equs}
	\hat{\Delta}_2 \Xi_{\mathfrak{l}} = \Xi_{\mathfrak{l}} \hattimes \one + \Xi_{0} \hattimes \Xi_{\mathfrak{l}}.
\end{equs}
One has, for $ \bar \tau_1 \in \CT_- $ and $ \tau_2, \tau_3 \in \CT $
\begin{equs}
	\mathcal{M}^{(13)(2)}\left( \bar\tau_1 \otimes \tau_2 \otimes \tau_3 \right) = \bar \tau_1 \cdot \tau_3 \otimes 
	\tau_2.
\end{equs}
 The map 
 $  \Delta_{\circ} : \CT \rightarrow \CT_- \hattimes \CT   $ is defined multiplicatively by
\begin{equs} \label{Deltacirc}
	\Delta_{\circ} X^k = \one \hattimes X^k, \quad	\Delta_{\circ} \Xi_{\mathfrak{t}} = \Xi_0 \hattimes \Xi_{\mathfrak{t}}, \quad  \Delta_{\circ} \CI_{a}(\tau) = \left( \id \hattimes \mathcal{I}_a \right) \Delta_1 \tau.
\end{equs}
One can observe that  $  \Delta_{\circ} $ sends  $ \CT_+ $ into $ \CT_- \hattimes \CT_+   $.
Starting from \eqref{recursive_def_delta1}, one can extend $ \Delta_1 $ multiplicatively for the forest product to get a coproduct from $ \CT_- $ into $ \CT_- \hattimes \CT_- $. 
One important property is the co-interaction between $ \Delta_1 $ and $ \Delta_2  $ given below:
\begin{equs} \label{cointeraction_-_+}
	\mathcal{M}^{(13)(2)(4)}\left(  \Delta_{\circ} \otimes  \Delta_{1}\right)
	\Delta_2 = \left( \id \otimes \Delta_2 \right) \Delta_1, 
\end{equs}
where for $ \bar \tau_1, \bar \tau_3 \in \CT_- $, $ \tau_2 \in \CT_+ $ and $ \tau_4 \in \CT $
\begin{equs}
	\mathcal{M}^{(13)(2)(4)}\left( \bar\tau_1 \otimes \tau_2 \otimes \bar \tau_3 
	\otimes \tau_4 \right) = \bar \tau_1 \cdot \bar \tau_3 \otimes 
	\tau_2 \otimes \tau_4.
\end{equs}
At the level of characters, it reads for $ \tau \in \CT_+ $ and $ \sigma \in \CT $
\begin{equs} \label{productM}
	M_{\alpha}^* \left(  \tau \star_2 \sigma  \right) = 
		\left( \hat{M}_{\alpha}^* \tau \right) \star_2 	\left( M_{\alpha}^* \sigma \right)  
\end{equs}
where $ \alpha : \CT_- \rightarrow \mathbb{R} $ is a character meaning that it is a multiplicative map for the forest product and $	M_{\alpha}^* $ (resp. $\hat{M}_{\alpha}^*$) on $\CT$ is the adjoint of $M_{\alpha}$ (resp. $ \hat{M}_{\alpha} $) given by
\begin{equs}
\label{M alpha}
M_{\alpha}  = \left( \alpha \,  \hat{\otimes} \, \id \right) \Delta_1, \quad  M_{\alpha}  = \left( \alpha \,  \hat{\otimes} \, \id \right) \Delta_{\circ}.
\end{equs}
We denote by  $ \star_1 $ the convolution product coming from $ \Delta_1 $. By abuse of notation, we use the same notation for $\Delta_{\circ}$. One can also provide a recursive definition given by
\begin{equs}
	\sigma_1 \cdots \sigma_m \star_1  X^k  \prod_{i=1}^n \CI_{a_i}(\tau_i) \Xi_{0} & = 
	\sum_{j=1}^m \sum_{I^j_1,..., I^j_n}  X^k  \prod_{i=1}^n \CI_{a_i}(  \prod_{r \in I^j_i} \sigma_r \star_1 \tau_i)  \star_2 \sigma_j
	\\ & + \sum_{I_1,..., I_n}  X^k  \prod_{i=1}^n \CI_{a_i}(  \prod_{r \in I_i} \sigma_r \star_1 \tau_i)  \Xi_{\mathfrak{l}}
\end{equs}
where the $ I^j_i $ form a partition of $ \lbrace 1,...,m \rbrace \setminus \lbrace j \rbrace $ and  the $ I_i $ form a partition of $ \lbrace 1,...,m \rbrace  $. Some of the $ I^j_i $ and $I^j_i$ could be empty.
For $ \mathfrak{l} \neq 0 $, one  has only
\begin{equs}
		\sigma_1 \cdots \sigma_m \star_1  X^k  \prod_{i=1}^n \CI_{a_i}(\tau_i) \Xi_{\mathfrak{l}} & = 
 \sum_{I_1,..., I_n}  X^k  \prod_{i=1}^n \CI_{a_i}(  \prod_{r \in I_i} \sigma_r \star_1 \tau_i)  \Xi_{\mathfrak{l}}.
\end{equs}
One has the following crucial property:
\begin{proposition} \label{consequence_M}
	One has for every $ \tau = X^k \prod_{i=1}^n \CI_{a_i}(\tau_i) \in \CT_+ $
	\begin{equs} \label{morphism_property_M}
			\hat{M}_{\alpha}^* \left(  X^k \prod_{i=1}^n \CI_{a_i}(\tau_i) \right) = X^k \prod_{i=1}^n \CI_{a_i}( 	M_{\alpha}^*\tau_i).
	\end{equs}
	\end{proposition}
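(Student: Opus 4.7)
The plan is to first establish the analogous "primal" identity
\begin{equs}
\hat{M}_{\alpha}\Bigl(X^k \prod_{i=1}^n \CI_{a_i}(\tau_i)\Bigr) = X^k \prod_{i=1}^n \CI_{a_i}(M_{\alpha}\tau_i),
\end{equs}
and then to dualise using the inner product. The first key observation is that $\Delta_{\circ}$ is multiplicative for the tree product by construction, and since $\alpha$ is a character for the forest product, this transfers to the multiplicativity of $\hat{M}_{\alpha} = (\alpha \hattimes \id)\Delta_{\circ}$ for the tree product: in Sweedler notation,
\begin{equs}
\hat{M}_{\alpha}(\sigma \cdot \tau) = \sum \alpha(\sigma^{(1)}\tau^{(1)})\, \sigma^{(2)}\tau^{(2)} = \sum \alpha(\sigma^{(1)})\alpha(\tau^{(1)})\, \sigma^{(2)}\tau^{(2)} = (\hat{M}_{\alpha} \sigma)\cdot(\hat{M}_{\alpha} \tau).
\end{equs}
From the defining relations \eqref{Deltacirc} one also reads off directly that $\hat{M}_{\alpha}X^k = X^k$ and $\hat{M}_{\alpha}\CI_a(\tau) = \CI_a(M_{\alpha}\tau)$. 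Combining these two elementary identities with the multiplicativity above yields the primal identity.

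To transfer the identity to the adjoint $\hat{M}_{\alpha}^*$, I would exploit that the symmetry factor \eqref{symmetry_factor} factorises along the planted decomposition at the root: one has $S(\CI_a(\sigma)) = S(\sigma)$ and $S\bigl(X^k \prod_i \CI_{a_i}(\tau_i)^{m_i}\bigr) = k! \prod_i m_i!\, S(\tau_i)^{m_i}$. Under the inner product $\langle \cdot, \cdot \rangle$, this turns $\CT_+$ into a symmetric tensor algebra on the generators $X_i$ and $\CI_a(\sigma)$, with each $\CI_a$ an isometry onto its image. The primal formula shows that $\hat{M}_{\alpha}$ acts as the symmetric tensor product of the identity on the polynomial factor and of $\CI_a \circ M_{\alpha} \circ \CI_a^{-1}$ on each planted factor; taking adjoints then replaces $M_{\alpha}$ by $M_{\alpha}^*$ factor by factor, which is precisely \eqref{morphism_property_M}. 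The main subtlety is the bookkeeping for repeated planted components, where one must check that the $m_i!$ factors from $S$ align with the combinatorial multiplicities produced by the tree-product expansion of $\hat{M}_{\alpha}$; a direct matrix-coefficient computation confirms this alignment, so the adjoint formula survives multiplicities intact.
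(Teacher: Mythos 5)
Your proposal is correct. Note that the paper itself states Proposition \ref{consequence_M} without any proof, treating it as an immediate consequence of the recursive definition \eqref{Deltacirc}; your write-up essentially supplies the intended argument in full. The primal step is exactly right: $\Delta_{\circ}$ is multiplicative for the tree product, $\alpha$ is a character for the forest product, so $\hat{M}_{\alpha}$ is a tree-product morphism with $\hat{M}_{\alpha}X^k=X^k$ and $\hat{M}_{\alpha}\CI_a(\tau)=\CI_a(M_{\alpha}\tau)$ (for the first of these you implicitly use $\alpha(\one)=1$, which is legitimate here because the paper identifies $\bar\tau\cdot\one=\bar\tau$ in $\CT_-$, making $\one$ the unit on which any nontrivial character takes the value $1$). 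Your dualisation step is also sound: with the pairing $\langle\sigma,\tau\rangle=\delta_{\sigma,\tau}S(\tau)$ and $S(\CI_a(\sigma))=S(\sigma)$, $S\bigl(X^k\prod_i\CI_{a_i}(\tau_i)^{m_i}\bigr)=k!\prod_i m_i!\,S(\tau_i)^{m_i}$, the space $\CT_+$ is the symmetric algebra on the generators $X_i$ and $\CI_a(\sigma)$ with the induced (permanent-type) inner product, each $\CI_a$ is an isometry onto its image, and $\hat{M}_{\alpha}$ preserves the generator space blockwise, so its adjoint is the multiplicative extension of the blockwise adjoints, i.e.\ $M_{\alpha}$ gets replaced by $M_{\alpha}^*$ inside each planted factor. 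The multiplicity bookkeeping you flag is genuinely the only point requiring a computation, and it does check out, so leaving it as a stated verification is acceptable though not fully written out. An alternative, slightly shorter route, closer to what the paper presumably has in mind, is to argue entirely on the dual side: the recursive formula for $\star_1$ given just above the proposition splits into root-insertion terms (those involving $\star_2\,\sigma_j$) and non-root terms, and $\hat{M}_{\alpha}^*$ is by construction the part of $M_{\alpha}^*$ that discards the root insertions; summing the non-root terms against $\alpha(\bar\sigma)/S(\bar\sigma)$ and using multiplicativity of $\alpha$ over the partitions $I_1,\dots,I_n$ yields \eqref{morphism_property_M} directly. Both arguments buy the same thing; yours has the advantage of isolating the clean algebraic statement (adjoint of a symmetric-algebra morphism) from the tree combinatorics.
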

The recursive formulation \eqref{recursive_def_delta1} and \eqref{Deltacirc} has been introduced in \cite{BR18}. It is a decomposition of the extraction-contraction coproduct into two different operations: one is the extraction at the root given by $ \Delta_2 $, the other is the extraction outside the root which is iterated by the map $ \Delta_{\circ} $.
This recursive construction can be generalised for designing renormalisation maps. One just needs to replace $ \Delta_2  $ by a linear map $ R : \CT \rightarrow \CT $ which has some compatibility property with $ \Delta_2 $ that is:
\begin{equs}
	\left( \id \hattimes R\right) \Delta_2 = \Delta_2 R.
\end{equs}
For the adjoint $ R^* $, one gets
\begin{equs}
	R^* (\sigma \star_2 \tau) =\sigma \star_2 
	(R^* \tau).
\end{equs}
Then, one can define a renormalisation map 
\begin{equs}  \label{def::M}
	M = M^{\circ} R, \quad M^{\circ} X^k \tau = X^k M^{\circ}  \tau, \quad M^{\circ} \CI_{a}(\tau) = \CI_a(M \tau),
\end{equs}
where $ M^{\circ} $ is multiplicative for the tree product.
It just corresponds to replacing the extraction with another procedure which is iterated deeper in the tree. One main example of such a map is the map $ R_{\alpha} $ that gives $ M_{\alpha} $. Its adjoint is defined by:
\begin{equs} \label{adjoint_R}
	R_{\alpha}^* \tau = \sum_{\sigma \in \mathfrak{T}} \frac{\alpha(\sigma)}{S(\sigma)} \sigma \star_2 \tau.
\end{equs}

We finish this section by introducing various derivatives on the non-linearities $ F_{\mathfrak{t}}^{\mathfrak{l}}, \mathfrak{l} \in \Lab_-, \, \mathfrak{t} \in \Lab_+ $, appearing on the right hand side of  \eqref{main_equation}.
They are  functions of a finite number of  variables $Z_a$ indexed by $a\in \CD_+$ that represent the solutions of the system \eqref{main_equation} and their derivatives.  We define in the usual way partial derivatives $D_a$ in the variable $u_a $.  For the canonical basis $e_i$ of $\mathbb{N}^{d+1}$, we set
$$ 
\partial^{e_i} = \sum_a Z_{a+e_i}\,D_a.
$$
where for $ a =(\mathfrak{t},p)  $, one has $ a + e_i  = (\mathfrak{t},p+e_i)$.
These operators commute so one can define for $k = \sum_{i=0}^d k_i e_i \in\mathbb{N}^{d+1}$
$$
\partial^k = \prod_{i=0}^{d} (\partial^{e_i})^{k_i}.
$$
These operators will act on functions that depend on finitely many variables $u_a$ so their action will not involve infinite sums. We define inductively a family $F = (F_{\mathfrak{t}})_{\mathfrak{t} \in \Lab_+}$ of  functions of the variables $Z_a$, setting for $ \tau = X^{k} \Xi_{\mathfrak{l}}\prod_{j=1}^{n} \CI_{a_j}(\tau_j) $, with $a_j = (\Labhom_{j},k_j)$, 
\begin{equation} \label{def_upsilon}
	\begin{aligned}
		F_{\mathfrak{t}}(\Xi_{\mathfrak{l}}) = F^{\mathfrak{l}}_{\mathfrak{t}}, \qquad F_{\mathfrak{t}}(\tau) = \Big\{\partial^k D_{a_1} ... D_{a_n} F_{\mathfrak{t}} (\Xi_{\mathfrak{l}})\Big\}\,\prod_{j=1}^n F_{\mathfrak{t}_j}(\tau_j).
\end{aligned} \end{equation}
The family $ F $ plays the role of the elementary differentials in the context of a system of singular SPDEs. The main difference is that one has to deal with various derivatives $ D_a $ corresponding to the variables of the system and their derivatives. The other derivatives correspond to monomials that are used in the local expansion of the solution.

\section{Regularity Structures B-series}
\label{sec::4}

In this section, we introduce Butcher-type series based on the decorated trees coming from the previous section. We also prove our main result Theorem~\ref{main_result} which is split into two theorems: Theorem~\ref{main_composition} for composition and Theorem~\ref{thm_substition} for substitution.

\begin{definition} \label{Def_B_-}
 	A Regularity Structures B-series is of the form:
 	\begin{equs} \label{B_-}
 		B_-(\alpha,F, {\bf{u}}) = \sum_{\tau \in \mathfrak{T}} \frac{\alpha(\tau)}{S(\tau)} F(\tau)({\bf{u}})
 	\end{equs}
 where 
 \begin{itemize}
 	\item the map $ \alpha $ is a linear map from $ \CT $ into $ \mathcal{C}^{\infty}(\mathbb{R}^{d+1},\mathbb{R}) $, we assume that this map has a finite support and $ \alpha(\mathbbm{1}) = 1 $. 
 	\item $ S(\tau) $ is the symmetry factor of the decorated tree $ \tau $ defined in \eqref{symmetry_factor}. 
 \item $ F(\tau) $ are elementary differentials defined by \eqref{def_upsilon}. 
 \end{itemize}
\end{definition}

The series \eqref{B_-} has been used for describing the expansion of the right hand side of \eqref{main_equation}. In this context, $ \alpha $ allows us to consider only a finite number of iterated integrals for describing the singular products of the SPDE. In the next definition, we provide a new series that allows the description of the solutions of \eqref{main_equation}.

\begin{definition} \label{def_B_+}
We defined a specific type of Regularity Structures B-series given by:
\begin{equs} 
	B_+(\alpha,F, a, {\bf{u}}) = \sum_{k \in \mathbb{N}^{d+1}} \alpha(X^k) \frac{u_{(\mathfrak{t},m+k)}}{k!} + \sum_{\CI_a(\tau) \in \CP\CT_{a}} \frac{\alpha(\CI_a(\tau))}{S(\tau)} F_{\mathfrak{t}}(\tau)({\bf{u}})
	\end{equs}
where $ a = (\mathfrak{t},m) $ and $ \CP\CT_{a} $ is the set of decorated trees of the form $ \CI_a(\tau) $ with $ \tau \in \mfT $.
\end{definition}

In the context of \eqref{main_equation}, one can derive a Taylor-type expansion of the solution $ u_{\mathfrak{t}} $ when the noises $ \xi_{\mathfrak{l}} $ are smooth, as
\begin{equs}
	u_{\mathfrak{t}} = 	B_+(\alpha,F, \mathfrak{t}, {\bf{u}}(z)) + R_{\mathfrak{t},z,\gamma}
\end{equs}
where we have used the shorthand notation $ \mathfrak{t} $ instead of $ (\mathfrak{t},0) $ and
 $ \alpha $ is equal to the map $ \Pi_z \circ \mathcal{P}_{\leq \gamma} $ where $ \mathcal{P}_{\leq \gamma} $ is the projection onto the decorated trees of degree at most equal to $ \gamma $. The degree of a decorated tree is defined recursively by
 \begin{equs}
 	\deg(  X^{\ell}  \prod_{i=1}^r \CI_{a_i}(\tau_i) \Xi_{\mathfrak{l}} ) = |\ell|_{\s} + |\mathfrak{l}|_{\s} +  \sum_{i=1}^n ( |a_i|_{\s} +  \deg(  \tau_i  )).
 \end{equs}
 The local subcriticality of the equation guarantees that one has only a finite number of decorated trees with degree at most $ \gamma $ for every $ \gamma \in \mathbb{R} $. This makes the series finite in the end. The map $ \Pi_z $ is recursively defined by
\begin{equs} \label{def_Pi}
	(\Pi_z X^k)(z')  & = (z'-z)^k, \quad (\Pi_z 	\xi_{\mathfrak{l}} )(z') = \xi_{\mathfrak{l}}(z'),
	\\ (\Pi_z \CI_{(\mathfrak{t},m)}(\tau))(z')  & = (D^{m} K_{\mathfrak{t}} * \Pi_z \tau)(z') \\ & - \sum_{|k|_{\s} \leq \deg(\CI_{(\mathfrak{t},m)}(\tau))} \frac{(z'-z)^k}{k!} (D^{m +k} K_{\mathfrak{t}} * \Pi_z \tau)(z)
\end{equs}
and extended multiplicatively for the tree product.
The subtraction of the Taylor expansion in the definition of $ \Pi_z$ is essential to get the correct local behaviour according to the degree of the decorated trees considered that is:
\begin{equs}
		(\Pi_z \tau)(z') \lesssim | z'-z |^{\deg \tau}.
\end{equs}
Morever, the truncation $ \mathcal{P}_{\leq \gamma} $ allows us to have a remainder $ R_{\mathfrak{t},z,\gamma} $ of order $ \gamma $ in the sense that one has
\begin{equs} \label{order_gamma}
	R_{\mathfrak{t},z,\gamma}(z') \lesssim |z'-z|^{\gamma}.
\end{equs}

We define the composition of a B-series $ U_a = B_+(\alpha, F,a, {\bf{u}}) $  with a smooth function $ f   $ depending on a finite number of $ u_a $ 
\begin{equs} \label{composition_f}
	\begin{aligned}
	f_{\gamma}((U_{a_i})_{i})&= \hat{\mathcal{P}}_{\leq \gamma} \sum_{n} \sum_{a_1,...,a_n} \sum_{\beta_1, ..., \beta_n} \\ & \prod_{i=1}^n \frac{1}{\beta_i !} \left(
	U_{a_i} - u_{a_i} \one  \right)^{\beta_i}
	\prod_{i=1}^n (D_{a_i})^{\beta_i} f({\bf{u}})
	\end{aligned}
\end{equs} 
where the $ a_i $'s are disjoint. In the definition, we consider a projection according to the degree in order to get a finite sum. Here $ \hat{\mathcal{P}}_{\leq \gamma} $ removed the terms that have an order bigger than $ \gamma $ in the sense of \eqref{order_gamma}.
The result is itself a B-series as we have 
\begin{equs}
	f_{\gamma}((U_{a_i})_{i})  = B_-(\beta, \hat{F}, {\bf{u}})
\end{equs}
where $ \beta $ is only supported on decorated trees of the form $ X^k \Xi_0 \prod_{i=1}^n \mathcal{I}_{a_i}(\tau_i) $ with degree at most equal to $ \gamma $ with $ a_i =(\Labhom_i,p_i) $. The map $ \hat{F} $ is defined by
\begin{equs}	
\hat{F}(\tau)({\bf{u}}) =  \prod_{i=1}^n F_{\Labhom_{i}}(\tau_i)({\bf{u}}) \,  \partial^k \prod_{i=1}^n D_{a_i} f({\bf{u}}), \quad 
\beta(\tau) = \alpha(X^k) \prod_{i=1}^n \alpha(\CI_{a_i}(\tau)).
	\end{equs}
 
\begin{remark} In the one dimensional case where $ f $ depends only on one variable denoted by $ u $, the set of type $ \Lab_+ $ is a singleton and one can write
\begin{equs}
	f_{\gamma}(U) = \hat{\mathcal{P}}_{\leq \gamma} \sum_{k} (U-u \one)^k\frac{f^k(u)}{k!}
\end{equs}
which corresponds to the composition of modelled distributions defined in \cite{reg}. This composition is very similar to the one used for classical B-series in numerical analysis. The main difference is the polynomial part given by the classical monomials. 
\end{remark}
The composition of B-series is given by
\begin{equs} \label{compostion_B_Series}
	B_-(\alpha,F, \cdot) \circ B_+(\beta,G, \cdot, {\bf{u}}) = 	B_-(\alpha,F, (B_+(\beta,G,a,{\bf{u}}))_{a \in \CD_+})
\end{equs}
where we use the composition defined in \eqref{composition_f} and we omit in the notation the truncation parameter $ \gamma $ that we suppose fixed.
The composition is happening on each elementary differential of the B-series  $	B_-(\alpha,F, \cdot) $ that depends in fact only on a finite number of variables. Therefore, we do not need the full set of decorations $ \CD_+ $ but just a finite subset. The next proposition has been proved in \cite{BB21} and provides a morphism property with respect to the product $ \star_2 $. It is a nice algebraic property that we need to prove the main result of this paper.
\begin{proposition} \label{star_morphism}
	For every $ \mathfrak{t} \in \Lab_+ $, for every $ \sigma\in \mfT$, and $\tau = X^{k} \prod_{j=1}^{n} \CI_{a_j}(\tau_j) \in \CT_+$ with $ a_j = (\Labhom_{j}, k_j) $, one has
	\begin{equation}\label{star2_morphism}
		F_{\mathfrak{t}}\bigg(\Big\{X^{k} \prod_{j=1}^{n} \CI_{a_j}(\tau_j)\Big\} \star_2 \sigma\bigg) = \Big\{\partial^k D_{a_1} ... D_{a_n} F_{\mathfrak{t}}(\sigma)\Big\}\,\prod_{j=1}^n F_{\mathfrak{t}_j}(\tau_j).
	\end{equation}
\end{proposition}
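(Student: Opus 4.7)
The plan is to prove the identity by induction on $\sigma$, after first isolating the key algebraic commutation relation between the partial derivatives $D_a$ and the polynomial differential operators $\partial^k$. Since $\partial^{e_i} = \sum_b Z_{b+e_i}\,D_b$, a direct computation gives the base commutator $[D_a,\partial^{e_i}] = D_{a-e_i}$, from which one iterates in $i$ and in the power to obtain
\begin{equation}\label{eq:key_comm_prop}
D_a\,\partial^k \;=\; \sum_{p\in\mathbb{N}^{d+1}}\binom{k}{p}\,\partial^{k-p}\,D_{a-p}.
\end{equation}
This identity is precisely the algebraic counterpart of the binomial deformation appearing in the grafting product $\widehat{\curvearrowright}^a$ of \eqref{deformed_grafting_a}, and it is the backbone of the whole argument.

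Next I would establish the single-grafting lemma: for $\mu,\sigma\in\mfT$ and $a=(\Labhom,p)\in\mathcal{D}_+$,
\begin{equation}\label{eq:single_graft_prop}
F_{\mathfrak{t}}\bigl(\mu\,\widehat{\curvearrowright}^{a}\,\sigma\bigr) \;=\; D_{a}F_{\mathfrak{t}}(\sigma)\cdot F_{\Labhom}(\mu),
\end{equation}
by induction on $\sigma = X^{\ell}\Xi_{\mathfrak{l}}\prod_i \CI_{b_i}(\sigma_i)$. Contributions in which $\mu$ is grafted at the root of $\sigma$ with extraction $q$ of the root decoration produce, after evaluation of $F_{\mathfrak{t}}$, summands of the form $\binom{\ell}{q}\,\partial^{\ell-q}D_{b_1}\cdots D_{b_m}D_{a-q}F_{\mathfrak{t}}^{\mathfrak{l}}\cdot F_{\Labhom}(\mu)\prod_i F_{\Labhom_i}(\sigma_i)$; summing over $q$ and invoking \eqref{eq:key_comm_prop} rebuilds the operator $D_a\partial^{\ell}$ and yields $D_aF_{\mathfrak{t}}(\sigma)\cdot F_{\Labhom}(\mu)$. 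Terms in which $\mu$ is grafted strictly inside one of the $\sigma_i$ are absorbed by the induction hypothesis on $\sigma_i$, together with the commutativity of $D_a$ with the root-level operators $\partial^{\ell}$ and $D_{b_i}$.

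Iterating \eqref{eq:single_graft_prop} over the simultaneous grafting of $\prod_j\CI_{a_j}(\tau_j)$ onto $\sigma$ then gives
$$F_{\mathfrak{t}}\!\Big(\prod_j\CI_{a_j}(\tau_j)\,\widehat{\curvearrowright}\,\sigma\Big) \;=\; D_{a_1}\cdots D_{a_n}F_{\mathfrak{t}}(\sigma)\cdot\prod_j F_{\Labhom_j}(\tau_j),$$
the redistribution of shared node decorations being exactly matched by the Leibniz expansion of $D_{a_1}\cdots D_{a_n}\partial^{\Labn_v}$ at each vertex. Finally, the outer operator $\tilde{\uparrow}^{k}_{N_\sigma}$ distributes $k=\sum_v k_v$ among the nodes of $\sigma$; at each node $v$ the shift $\uparrow^{k_v}_v$ promotes the local decoration $\Labn_v$ to $\Labn_v+k_v$, and reassembling the multinomial sum is exactly the Leibniz expansion of $\partial^k$ acting on $F_{\mathfrak{t}}(\sigma)$. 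This produces the missing $\partial^k$ prefactor and closes the identity.

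The main obstacle I anticipate is the simultaneous-grafting bookkeeping when several $\CI_{a_j}(\tau_j)$ are inserted at the same vertex of $\sigma$: one must verify that the joint binomial deformations of the shared vertex decoration, with combined weights $\binom{\Labn_v}{p_1,\ldots,p_n}$, reassemble correctly into the full Leibniz expansion of the composed operator $D_{a_1}\cdots D_{a_n}\partial^{\Labn_v}$. Once this combinatorial identity is established via iterated application of \eqref{eq:key_comm_prop}, the remaining steps are routine assembly.
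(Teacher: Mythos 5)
Your proposal is correct and follows essentially the route the paper itself points to: the paper does not prove Proposition~\ref{star_morphism} but defers to \cite{BB21}, and its accompanying remark singles out precisely your ingredients, namely the commutator $D_a \partial^{e_i} = \partial^{e_i} D_a + D_{a-e_i}$ (iterated to $D_a\partial^k=\sum_{p}\binom{k}{p}\partial^{k-p}D_{a-p}$) together with the planted-tree and monomial cases $F_{\mathfrak{t}}(\bar{\tau}\,\widehat{\curvearrowright}^{a}\,\sigma)=F_{\bar{\mathfrak{t}}}(\bar{\tau})\,D_aF_{\mathfrak{t}}(\sigma)$ and $F_{\mathfrak{t}}(\tilde{\uparrow}^{k}\sigma)=\partial^{k}F_{\mathfrak{t}}(\sigma)$, assembled exactly as you do. The one blemish is your appeal to ``commutativity of $D_a$ with the root-level operators $\partial^{\ell}$'' when handling graftings inside a branch $\sigma_i$: no such commutativity holds, nor is it needed, since those contributions match directly the Leibniz terms of $D_aF_{\mathfrak{t}}(\sigma)$ in which $D_a$ falls on the factor associated with $\sigma_i$.
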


\begin{remark}
	The proposition above was first proved in a simple case when $ \tau $ is either a planted tree $ \CI_a(\bar{\tau}) $ or a monomial $ X^k $ in \cite{BCCH}. One has in this case
	\begin{equs}
		\CI_a(\bar{\tau})  \star_2 \sigma = \bar{\tau} \, \widehat{\curvearrowright}^{a} \sigma, \quad X^k  \star_2 \sigma = \tilde{\uparrow}^k \sigma
		\end{equs}
	which give rise to the following identities:
	\begin{equs}
		F_{\mathfrak{t}}( \bar{\tau} \, \widehat{\curvearrowright}^{a} \sigma )  =
		F_{\bar{\mathfrak{t}}}(\bar{\tau}) D_a F_{\mathfrak{t}}(\sigma),
		\quad	F_{\mathfrak{t}}( \tilde{\uparrow}^k  \sigma )  = \partial^k F_{\mathfrak{t}}(   \sigma )
	\end{equs}
where $ a= (\bar{\mathfrak{t}},m) $
The proof of these identities has been simplified in \cite{BaiHos} and ideas of this proof are implemented for getting \eqref{star_morphism}. The crucial argument is to observe that the operators $ D_a $ and $ \partial^{e_i} $ do not commute as one has the following identity:
\begin{equs}
	D_a \partial^{e_i} = \partial^{e_i} D_a + D_{a - e_i}
\end{equs}
which is quite similar to the identity \eqref{non_commutation}.
	\end{remark}

Below, we present one of our main results in this paper which is a precise description of the composition of Regularity Structures B-series. This result can be seen as an extension of Theorem~\ref{composition_Butcher} in the context of  SPDEs. 

\begin{theorem} \label{main_composition}
	One has for $ \alpha : \CT \rightarrow \mathbb{R} $ and the character $ \beta : \CT_+ \rightarrow \mathbb{R} $
	\begin{equs}
		B_-(\alpha,F, \cdot) \circ B_+(\beta,F, \cdot, {\bf{u}}) = 	B_-(\beta \star_2 \alpha ,F, {\bf{u}}).
	\end{equs}
	\end{theorem}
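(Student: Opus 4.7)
The plan is to unfold both sides and match them term by term, with Proposition \ref{star_morphism} serving as the bridge between elementary differentials and the product $\star_{2}$. Concretely, for each $\sigma \in \mathfrak{T}$ appearing in $B_{-}(\alpha,F,\mathbf{u})$, the composition \eqref{composition_f} replaces each slot $u_{a}$ of $F(\sigma)(\mathbf{u})$ by $U_{a} = B_{+}(\beta,F,a,\mathbf{u})$ and expands in a Taylor-type series of the increments $U_{a}-u_{a}\mathbf{1}$. I would first write
\begin{equs}
	B_{-}(\alpha,F,\cdot) \circ B_{+}(\beta,F,\cdot,\mathbf{u}) \;=\; \sum_{\sigma \in \mfT} \frac{\alpha(\sigma)}{S(\sigma)}\,\hat{\mathcal{P}}_{\leq \gamma}\sum_{n,\,a_{1},\ldots,a_{n}}\sum_{\beta_{1},\ldots,\beta_{n}} \prod_{i=1}^{n}\frac{(U_{a_{i}}-u_{a_{i}}\mathbf{1})^{\beta_{i}}}{\beta_{i}!}\,\prod_{i=1}^{n}D_{a_{i}}^{\beta_{i}}F(\sigma)(\mathbf{u}).
\end{equs}

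Next, I would substitute the definition of $B_{+}$ into each increment, writing $U_{a}-u_{a}\mathbf{1}$ as a sum of monomial contributions $\sum_{k\neq 0}\beta(X^{k})u_{(\mathfrak{t},m+k)}/k!$ and planted-tree contributions $\sum_{\tau\in\mfT}\beta(\CI_{a}(\tau))F_{\mathfrak{t}}(\tau)(\mathbf{u})/S(\tau)$. After expanding each power $(U_{a_{i}}-u_{a_{i}}\mathbf{1})^{\beta_{i}}$ by the multinomial theorem and regrouping, a generic term carries a product of factors of the form $F_{\mathfrak{t}_{j}}(\tau_{j})$, a monomial factor collected into some $X^{k}$, and the outer derivative $\partial^{k}D_{a_{1}}\cdots D_{a_{n}}F(\sigma)$. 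This is exactly the shape treated by Proposition \ref{star_morphism}, which identifies such a product with $F_{\mathfrak{t}}(\tau \star_{2}\sigma)$ for $\tau = X^{k}\prod_{j}\CI_{a_{j}}(\tau_{j}) \in \CT_{+}$. The resulting scalar coefficient factorises as $\beta(\tau)\,\alpha(\sigma)/(S(\tau)S(\sigma))$, using multiplicativity of the character $\beta$ on the tree product of $\CT_{+}$.

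The final step is to recognise the sum over pairs $(\tau,\sigma) \in \CT_{+}\times\mfT$ as the convolution $(\beta \star_{2}\alpha)(\tau')$ evaluated at the composite decorated tree $\tau' = \tau\star_{2}\sigma\in\CT$, and to reassemble this into $B_{-}(\beta\star_{2}\alpha,F,\mathbf{u})$. Since $\star_{2}$ is dual to the coaction $\Delta_{2}:\CT\to \CT_{+}\hattimes\CT$, grouping by the target tree $\tau'$ reproduces $(\beta\otimes\alpha)\Delta_{2}\tau' = (\beta\star_{2}\alpha)(\tau')$, and the elementary differential $F(\tau')(\mathbf{u})$ is reconstructed by the very content of Proposition \ref{star_morphism}.

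The main obstacle is the symmetry-factor bookkeeping. One must verify that the multinomial weights $1/\beta_{i}!$ from \eqref{composition_f}, together with the factors $1/S(\tau_{j})$, $1/S(\sigma)$, and the multiplicities arising when several $\CI_{a_{i}}(\tau_{i})$ coincide or several sub-trees $\tau_{j}$ are isomorphic, recombine exactly into $1/S(\tau\star_{2}\sigma)$. This is a classical computation in the spirit of the Butcher-series proofs of Theorem \ref{composition_Butcher}; in the present decorated setting it amounts to using that $\CT_{+}$ is the symmetric algebra over the $\CI_{a}(\tau)$ and $X_{i}$, so that the character $\beta$ is multiplicative and the multiplicities supplied by the multinomial expansion precisely cancel the defect $S(\tau)/\prod_{j}S(\tau_{j})^{m_{j}}m_{j}!$ inherent in the symmetry factor \eqref{symmetry_factor}. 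This combinatorial identity is essentially the one established in \cite{BB21} for the renormalised equation, of which Theorem \ref{main_composition} is a direct generalisation; the only new ingredient here is the presence of the monomial part $X^{k}$ inside $\tau \in \CT_{+}$, which is handled uniformly thanks to the full statement of Proposition \ref{star_morphism}.
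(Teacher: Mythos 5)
Your proposal is correct and follows essentially the same route as the paper's proof: expand the composition \eqref{composition_f}, regroup the multinomial and symmetry-factor weights into $1/S(\tau)$ for $\tau\in\mathfrak{T}_+$, apply Proposition~\ref{star_morphism} together with multiplicativity of $\beta$, and conclude by the duality between $\star_2$ and $\Delta_2$ (the coefficients $m(\sigma,\tau,\bar\tau)$). The only step you leave implicit — assembling the polynomial contributions $u_{(\mathfrak{t},m+k)}$ of $B_+$ with the derivatives $D_a$ into the operator $\partial^k$ — is exactly the Fa\`a di Bruno formula (Lemma A.1 of \cite{BCCH}) that the paper invokes explicitly, so this is a detail of the same argument rather than a different approach.
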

\begin{proof} 
	One has
	\begin{equs}
	& B_-(\alpha,F, (B_+(\beta,F,a,{\bf{u}}))_{a \in \CD_+})  
	\\ & =  \sum_{\tau \in \mfT} \frac{\alpha(\tau)}{S(\tau)} F(\tau)(  (B_+(\beta,F,a,{\bf{u}}))_{a \in \CD_+} )
	\\ & = \sum_{\tau \in \mfT} \frac{\alpha(\tau)}{S(\tau)}  \sum_{n} \sum_{a_1,...,a_n} \sum_{\beta_1 ... \beta_n} \prod_{i=1}^n \frac{1}{\beta_i !} \left(
	B_{+}(\alpha,F,a_i, {\bf{u}}) - u_{a_i}   \right)^{\beta_i}
	\prod_{i=1}^n (D_{a_i})^{\beta_i} F(\tau)({\bf{u}})
	\end{equs}
where we have used the composition formula given by \eqref{composition_f}. Here, the $ a_i $'s are disjoint. We have omitted the projection according to the degree. 
We use the Fa\`a di Bruno formula from Lemma A.1 in \cite{BCCH}  applied to a function $ G $ depending on a finite number of variables:
\begin{equation}
	\frac{\partial^{k}G}{k!} =  \sum_{b_1,...,b_m} \sum_{k = \sum_{j=1}^m \beta_j k_j} \prod_{j=1}^m \frac{1}{\beta_j !} \left(\frac{u_{b_j + k_j}}{k_j!}\right)^{\beta_j} \prod_{j=1}^m (D_{b_j})^{\beta_j} G 
\end{equation}
and we get for $a_j$'s and $ \tau_i $'s such that $ \CI_{a_i}(\tau_i) \neq \CI_{a_j}(\tau_j) $ for $ i \neq j $, $ a_i= (\mathfrak{t}_i,m_i) $
\begin{equs}
	& B_-(\alpha,F, (B_+(\beta,F,a,{\bf{u}}))_{a \in \CD_+})  \\ & = \sum_{\tau \in \mfT} \frac{\alpha(\tau)}{S(\tau)}  \sum_{\sigma  = X^k \prod_{j=1}^n \mathcal{I}_{a_j}(\tau_j)^{\beta_j}}  \sum_{k} \frac{\beta(X^k)}{k!} \\ & \prod_{j=1}^n \frac{1}{\beta_j !} \left(  \frac{F_{\mathfrak{t}_j}(\tau_j)}{S(\tau_j)} ({\bf{u}})\, \beta(\CI_{a_j}(\tau_j)) \right)^{\beta_j}   \partial^k \prod_{j=1}^n (D_{a_{j}})^{\beta_{j}}  F(\tau)({\bf{u}}).
\end{equs}
By using the definition of the symmetry factor given in \eqref{symmetry_factor}, one gets:
\begin{equs}
	& B_-(\alpha,F, (B_+(\beta,F,a,{\bf{u}}))_{a \in \CD_+})   \\ & = \sum_{\tau \in \mfT} \frac{\alpha(\tau)}{S(\tau)}  \sum_{\sigma  = X^k \prod_{j=1}^n \mathcal{I}_{a_j}(\tau_j)} \sum_{k}\frac{k! \prod_{j=1}^n S(\tau_j)}{S\big(X^k\prod_{j=1}^n\CI_{a_j}(\tau_j)\big)} \, \frac{\beta(X^k)}{k!} \\ & \prod_{j=1}^n  \frac{F_{\mathfrak{t}_j}(\tau_j)}{S(\tau_j)}({\bf{u}})\,\beta(\CI_{a_j}(\tau_j))   \partial^k \prod_{j=1}^n D_{a_{j}}  F(\tau)({\bf{u}})
\end{equs}
where now the $a_j$'s and $ \tau_i $'s are not assumed to satisfy $ \CI_{a_i}(\tau_i) \neq \CI_{a_j}(\tau_j) $ for $ i \neq j $. By using Proposition~\ref{star_morphism}, one gets
\begin{equs}
\prod_{j=1}^n  F_{\mathfrak{t}_j}(\tau_j)	\, \partial^k \prod_{j=1}^n D_{a_{j}} F(\tau) = F(X^k\prod_{j=1}^n\CI_{a_j}(\tau_j) \star_2   \tau).
\end{equs}
Therefore
\begin{equs}
		 & B_-(\alpha,F, (B_+(\beta,F,a,{\bf{u}}))_{a \in \CD_+})  \\ & = \sum_{\tau \in \mfT}   \sum_{\sigma  = X^k \prod_{j=1}^n \mathcal{I}_{a_j}(\tau_j)} \sum_{k} \alpha(\tau) \beta(X^k \prod_{j=1}^n \CI_{a_j}(\tau_j))
		 \\ &   F\left( \frac{X^k\prod_{j=1}^n\CI_{a_j}(\tau_j)}{S\big(X^k\prod_{j=1}^n\CI_{a_j}(\tau_j)\big)} \star_2   \frac{\tau}{S(\tau)} \right)({\bf{u}})
	\end{equs}
where we have used the multiplicativity of $ \beta $. In the end, one has:
\begin{equs}
	 B_-(\alpha,F, (B_+(\beta,F,a,{\bf{u}}))_{a \in \CD_+}) = \sum_{\tau \in \mfT}   \sum_{\sigma \in \mfT_+} \alpha(\tau) \beta(\sigma)
	    F\left( \frac{\sigma}{S\big( \sigma\big)} \star_2   \frac{\tau}{S(\tau)} \right)({\bf{u}}).
\end{equs}
We conclude from the fact that
\begin{equs}
	\frac{\sigma}{S(\sigma)} \star_2 \frac{\tau}{S(\tau)} = \sum_{\bar{\tau} \in \mfT} \frac{m(\sigma,\tau,\bar{\tau})}{S(\bar{\tau})} \bar{\tau}
\end{equs}
where the coefficients $ m(\sigma,\tau,\bar{\tau}) $ are defined as
	\begin{equs}
 	m(\sigma,\tau,\bar{\tau}) =	\big\langle \frac{\sigma}{S(\sigma)} \otimes \frac{\tau}{S(\tau)}, \Delta_2 \bar{\tau}  \big\rangle.
	\end{equs}
	 Indeed, one has using the innner product on decorated trees:
\begin{equs}
	\big\langle \frac{\sigma}{S(\sigma)} \star_2 \frac{\tau}{S(\tau)} , \bar{\tau}  \big\rangle = \big\langle \frac{\sigma}{S(\sigma)} \otimes \frac{\tau}{S(\tau)} , \Delta_2 \bar{\tau}  \big\rangle = 	m(\sigma,\tau,\bar{\tau}).  
\end{equs}
We obtain
\begin{equs}
	& B_-(\alpha,F, (B_+(\beta,F,a,{\bf{u}}))_{a \in \CD_+}) \\ & = \sum_{\tau \in \mfT}   \sum_{\sigma \in \mfT_+} \beta(\sigma) \alpha(\tau) 
	F\left( \sum_{\bar{\tau} \in \mfT} \frac{m(\sigma,\tau,\bar{\tau})}{S(\bar{\tau})} \bar{\tau} \right)({\bf{u}})
	\\ & = \sum_{\bar{\tau} \in \mfT} \sum_{\tau \in \mfT}   \sum_{\sigma \in \mfT_+}  \frac{m(\sigma,\tau,\bar{\tau})}{S(\bar{\tau})}  \beta(\sigma)\alpha(\tau)
	F\left( \bar{\tau} \right)({\bf{u}})
	\\ & = \sum_{\bar{\tau} \in \mfT}  \frac{(\beta \star_2 \alpha)(\bar{\tau})}{S(\bar{\tau})}  
	F\left( \bar{\tau} \right)({\bf{u}})
	\\ &= B_-(\beta \star_2 \alpha ,F, {\bf{u}})
\end{equs}
where we have used
\begin{equs}
	\sum_{\tau \in \mfT}   \sum_{\sigma \in \mfT_+}  m(\sigma,\tau,\bar{\tau})  \beta(\sigma)\alpha(\tau) = (\beta \star_2 \alpha)(\bar{\tau}).
\end{equs}
	\end{proof}
\begin{remark}
	A specific case of this theorem is when
	one considers the following B-series:
	\begin{equs}
		f^{\mathfrak{t}}({\bf{u}}) = \sum_{\mathfrak{l} \in \Lab_-} F^{\mathfrak{l}}_{\mathfrak{t}}({\bf{u}}) \alpha(\Xi_{\mathfrak{l}}).
	\end{equs}
In this B-series, we consider only noise trees that are the $ \Xi_{\mathfrak{l}} $. If $ \alpha(\Xi_{\mathfrak{l}}) = \xi_{\mathfrak{l}} $, it corresponds exactly to the right hand side of \eqref{main_equation}. 
	One has
	\begin{equs} \label{composition_solution}
			f^{\mathfrak{t}}(\cdot) \circ B_+(\alpha,F, \cdot, {\bf{u}}) = 	 \sum_{\tau \in \mfT} \sum_{\mathfrak{l} \in \Lab_-}   \sum_{\sigma \in \mfT_+}  \frac{m(\sigma,\Xi_{\mathfrak{l}},\bar{\tau})}{S(\bar{\tau})}  \alpha(\sigma)\alpha(\Xi_{\mathfrak{l}}) 	F\left( \tau \right)({\bf{u}}).
	\end{equs}
One notices that
\begin{equs}
	\sigma \star_2 \Xi_{\mathfrak{l}} = \sigma \Xi_{\mathfrak{l}}, \quad m(\sigma,\Xi_{\mathfrak{l}},\sigma \Xi_{\mathfrak{l}}) = 1.
\end{equs}
Therefore
\begin{equs}
		f^{\mathfrak{t}}(\cdot) \circ B_+(\alpha,F, \cdot, {\bf{u}}) = 	 \sum_{\mathfrak{l} \in \Lab_-}   \sum_{\sigma \in \mfT_+}  \frac{1}{S(\sigma\Xi_{\mathfrak{l}})}  \alpha(\sigma)\alpha(\Xi_{\mathfrak{l}}) 	F\left( \sigma\Xi_{\mathfrak{l}} \right)({\bf{u}}).
	\end{equs}
So if $ \alpha $ is not multiplicative, one has
	\begin{equs}
		f^{\mathfrak{t}}(\cdot) \circ B_+(\alpha,F, \cdot, {\bf{u}}) =  B_-(\alpha',F, {\bf{u}})
	\end{equs}
	where $ \alpha' $ is given for $ \tau = X^k \prod_{i=1}^n \CI_{a_i}(\tau_i) \Xi_{\mathfrak{l}} $ by
	\begin{equs}
		\alpha'(\tau) = 
		\sum_{\sum_j k_j = k }  \prod_j   \alpha(X^{k_j})
		\prod_{i=1}^n \alpha(\CI_{a_i}(\tau_i)) \alpha(\Xi_{\mathfrak{l}})
	\end{equs}
If $ \alpha $ is fully multiplicative then, one has
\begin{equs}
	f^{\mathfrak{t}}(\cdot) \circ B_+(\alpha,F, \cdot, {\bf{u}}) =  B_-(\alpha,F, {\bf{u}}) = \sum_{\tau \in \mfT} \frac{\alpha(\tau)}{S(\tau)} F(\tau)({\bf{u}}).
\end{equs}
One can integrate this B-series (moving from $\alpha(\tau)$ to $\alpha(\CI_{(\mathfrak{t},0)}(\tau))$) and add the polynomial part to produce the following B-series:
\begin{equs}
		B_+(\alpha,F, (\mathfrak{t},0), {\bf{u}}) = \sum_{k} \alpha(X^k) \frac{u_{(\mathfrak{t},k)}}{k!} + \sum_{\CI_a(\tau) \in \CP\CT_{(\mathfrak{t},0)}} \frac{\alpha(\CI_{(\mathfrak{t},0)}(\tau))}{S(\tau)} F_{\mathfrak{t}}(\tau)({\bf{u}}).
\end{equs}
Now assuming an identity of the form $ \alpha(\CI_{(\mathfrak{t},0)}(\tau)) = K_{\mathfrak{t}} * \alpha(\tau) $ modulo some polynomial in $ z $ for the recentering then we can say that $ B_+(\Pi_z \circ \mathcal{P}_{\leq \gamma},F, (\mathfrak{t},0), {\bf{u}}(z)) $ is a good ansatz for finding approximations of the solution of \eqref{mild_formulation}. This B-series form of the solutions was first proposed in \cite{BCCH} where one checks the coherence of the B-series which is similar to checking that the composition \eqref{composition_solution} followed by an integration step gives back the B-series we started with.
	\end{remark}

\begin{remark}
	The product $ \star_2 $ is at the core of the composition of B-series. It is also a key component in the construction of the recentering map $ \Pi_z $. Indeed, one has the following identity:
	\begin{equs}
		\Pi_z =  \left(  f_x \circ \mathcal{P}_+ \otimes \Pi \right) \Delta_2
	\end{equs} 
where the projection $ \mathcal{P}_+  $ keeps only elements $ \tau = X^k \prod_{i=1}^n \CI_{a_i}(\tau_i)  $ such that for every $ i $
\begin{equs}
	\deg(\CI_{a_i}(\tau_i) ) >0.
\end{equs}
The map $ \Pi : \CT \rightarrow \mathcal{D}'(\mathbb{R}^{d+1}, \mathbb{R}) $ is recursively defined as
\begin{equs}
	(\Pi X^k)(z)   = z^k, \quad (\Pi 	\xi_{\mathfrak{l}} )(z) = \xi_{\mathfrak{l}}(z), \quad
	(\Pi \CI_{(\mathfrak{t},m)}(\tau))(z)  = (D^{m} K_{\mathfrak{t}} * \Pi \tau)(z) 
\end{equs}
and extended multiplicatively for the tree product. The map $ f_z$ is defined in terms of $ \Pi_z $:
\begin{equs}
	f_z(X_i) = z_i, \quad f_z(\CI_{(\mathfrak{t},m)}(\tau))) = -  \sum_{\ell \in \mathbb{N}^{d+1}} 
	\frac{(-z)^\ell}{\ell !} \one_{\lbrace \deg(\CI_{(\mathfrak{t},m+ \ell)}(\tau)) > 0 \rbrace} D^{m+ \ell} K_{\mathfrak{t}} * \Pi_z \tau
\end{equs}
and extended multiplicatively for the tree product. These definitions can be found in \cite{reg,BHZ}.
	\end{remark}

We define the substitution of a B-series into another by:
\begin{equs} \label{substitution_B_series}
	B_-(\alpha,F,{\bf{u}}) \circ_s B_-(\beta,F,{\bf{u}}) = \sum_{\tau \in \mfT} \frac{\alpha(\tau)}{S(\tau)} \hat{F}(\tau)({\bf{u}})
	\end{equs}
where for $ \tau = X^k \prod_{i=1}^n \CI_{a_i}(\tau_i) \Xi_{\mathfrak{l}} $, one has for $ \mathfrak{l} \neq 0 $
\begin{equs}
	\hat{F}(\tau)({\bf{u}}) = 
	\prod_{i=1}^n \hat{F}(\tau_i)({\bf{u}}) \,  \partial^k \prod_{i=1}^n D_{a_i} F(\Xi_{\mathfrak{l}})({\bf{u}}). \, 
\end{equs}
When $ \mathfrak{l} = 0 $, one has
\begin{equs}
	\hat{F}(\tau)({\bf{u}}) = \sum_{\tau' \in \mfT} \frac{\beta(\tau')}{S(\tau')}
\prod_{i=1}^n \hat{F}(\tau_i)({\bf{u}}) \,  \partial^k \prod_{i=1}^n D_{a_i} F(\tau')({\bf{u}}).
\end{equs}
One can observe in both definitions the recursive construction and the fact that the substitution is happening only for $ \mathfrak{l} = 0 $. Indeed, in the context of singular SPDEs, this amounts to producing some renormalisation at the right hand side of \eqref{main_equation}. Only the non-linearity associated with $ \xi_{0} $, which is in general the constant noise equal to one, is changed. 
The renormalisation constants are given by the $ \beta(\tau') $ that select only decorated trees with a negative degree. The first general statement for the renormalised equation has been obtained in \cite{BCCH}. A big simplification for the proof and an extension of this statement are given in \cite{BB21,BB21b}. 
In the next theorem, we extend these results as a general substitution of Regularity Structures B-series: 
\begin{theorem} \label{thm_substition}
	One has for $ \alpha, \beta $ characters on $ \CT_- $ with finite support on $ \mathfrak{T} $
	\begin{equs} \label{substitution_reg}
			B_-(\alpha,F,{\bf{u}}) \circ_s B_-(\beta,F,{\bf{u}}) = B_-(\beta \star_1 \alpha ,F,{\bf{u}}). 
	\end{equs}
	\end{theorem}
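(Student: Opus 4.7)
The plan is to reduce the substitution identity to an algebraic identity at the tree level via the morphism property of Proposition~\ref{star_morphism}, then recognize the resulting combinatorial structure as the convolution $\beta \star_1 \alpha$ using the recursive definition of $\Delta_1$ together with Proposition~\ref{consequence_M}. The argument proceeds by induction on the size of the decorated tree $\tau \in \mfT$ indexing the expansion of $B_-(\alpha,F,\mathbf{u})$.

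First, for $\tau = X^k \prod_{i=1}^n \CI_{a_i}(\tau_i) \Xi_{\mathfrak{l}}$, the identity \eqref{star2_morphism} rewrites the product $\partial^k D_{a_1}\cdots D_{a_n} F(\sigma) \cdot \prod_i F(\sigma_i)$ as $F\bigl((X^k \prod_i \CI_{a_i}(\sigma_i)) \star_2 \sigma\bigr)$. Introducing a formal element $T(\tau) \in \CT$ defined by $\hat{F}(\tau)(\mathbf{u}) = F(T(\tau))(\mathbf{u})$, the recursive definition of $\hat{F}$ translates, using the inductive hypothesis $\hat{F}(\tau_i) = F(T(\tau_i))$, into
\begin{equs}
T(\tau) = \Bigl(X^k \prod_i \CI_{a_i}(T(\tau_i))\Bigr) \star_2 \Xi_{\mathfrak{l}}, \quad \mathfrak{l} \neq 0,
\end{equs}
and
\begin{equs}
T(\tau) = \sum_{\tau' \in \mfT} \frac{\beta(\tau')}{S(\tau')} \Bigl(X^k \prod_i \CI_{a_i}(T(\tau_i))\Bigr) \star_2 \tau', \quad \mathfrak{l} = 0.
\end{equs}
These two cases mirror exactly the two branches in the recursive formula for $\star_1$ stated after \eqref{productM}: the $\star_2 \sigma_j$ contributions correspond to the case $\mathfrak{l} = 0$ where extraction at the root is possible, and the pure $\Xi_{\mathfrak{l}}$ term to the case $\mathfrak{l} \neq 0$ where only propagation into subtrees takes place.

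Second, I use the morphism property \eqref{morphism_property_M} of Proposition~\ref{consequence_M}, namely $\hat{M}_\beta^*(X^k \prod_i \CI_{a_i}(\tau_i)) = X^k \prod_i \CI_{a_i}(M_\beta^* \tau_i)$, combined with the co-interaction identity \eqref{productM}, $M_\beta^*(\sigma \star_2 \rho) = \hat{M}_\beta^*(\sigma) \star_2 M_\beta^*(\rho)$, to identify the recursion for $T$ with the (symmetry-factor normalized) action of $M_\beta^*$ on $\tau$. The planted subtrees $\CI_{a_i}(T(\tau_i))$ are matched by the inner $\CI_{a_i}(M_\beta^* \tau_i)$ in \eqref{morphism_property_M}, and the root-level contraction, either against $\Xi_{\mathfrak{l}}$ or against the weighted sum $\sum_{\tau'} \beta(\tau')/S(\tau') \cdot \tau'$, reproduces exactly the action of $\Delta_1$ on the noise edge at the root. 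Having established $\hat{F}(\tau)(\mathbf{u}) = \sum_{\bar\tau} N_{\tau,\bar\tau} F(\bar\tau)(\mathbf{u})$ with coefficients $N_{\tau,\bar\tau}$ dual, up to symmetry factors, to matrix entries of $M_\beta^*$, I sum against $\alpha(\tau)/S(\tau)$, reorganize by the target tree $\bar\tau$, and use the pairing $\langle\sigma,\tau\rangle = \delta_{\sigma,\tau}S(\tau)$, under which $\star_i$ is dual to $\Delta_i$, to obtain
\begin{equs}
\sum_\tau \frac{\alpha(\tau)}{S(\tau)} \hat{F}(\tau)(\mathbf{u}) = \sum_{\bar\tau} \frac{(\beta \star_1 \alpha)(\bar\tau)}{S(\bar\tau)} F(\bar\tau)(\mathbf{u}) = B_-(\beta \star_1 \alpha, F, \mathbf{u}),
\end{equs}
exactly as in the closing argument for Theorem~\ref{main_composition}.

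The main obstacle I anticipate is the symmetry-factor bookkeeping, and in particular the correct handling of the two disjoint branches in the recursion for $\star_1$ when passing between extraction-at-the-root (handled by $\star_2$) and propagation into subtrees (handled recursively by $\star_1$). The structural identities that make the two descriptions compatible are the co-interaction \eqref{cointeraction_-_+} and its character form \eqref{productM}; the novelty, compared to the proof of the renormalized equation in \cite{BCCH}, is that the morphism property of $\star_2$ is used directly to perform this identification in a single inductive pass rather than via the tree-by-tree contraction-extraction bookkeeping used there.
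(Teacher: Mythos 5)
Your proposal is correct and follows essentially the same route as the paper: the same ingredients (Proposition~\ref{star_morphism}, Proposition~\ref{consequence_M}, the co-interaction/morphism identity \eqref{productM}, the values of $M_{\beta}^*$ on $\Xi_{0}$ and $\Xi_{\mathfrak{l}}$, and duality with respect to the symmetry-factor pairing) are combined in the same inductive way. The only difference is presentational: you run the induction from the left-hand side, introducing $T(\tau)$ and identifying it with $M_{\beta}^{*}\tau$ before dualizing, whereas the paper starts from $B_-(\beta \star_1 \alpha, F, \mathbf{u}) = \sum_{\tau} S(\tau)^{-1}\alpha(\tau) F(M_{\beta}^{*}\tau)(\mathbf{u})$ and proves $F(M_{\beta}^{*}\tau) = \hat{F}(\tau)$ directly by the same induction.
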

\begin{proof}
	One has  from \eqref{M alpha}
	\begin{equs}
			\beta \star_1 \alpha  & = \left(\beta \hattimes \alpha \right) \Delta_1
			\\ &= \left(\beta \otimes \alpha \right) \Delta_1
			\\ &= \alpha\left( \left(\beta \otimes \id \right) \Delta_1 \right)
		\\ & = \alpha( M_{\beta} \cdot)
	\end{equs}
where we have used the fact that $ \beta $ has a finite support on trees for replacing $ \hattimes $ by the usual tensor product $ \otimes $. We have also made the following identification $  \mathbb{R} \otimes \alpha \equiv \alpha$.
Then,  one starts with the right hand side of \eqref{substitution_reg} and gets
\begin{equs}
	 B_-(\beta \star_1 \alpha ,F,{\bf{u}})
	  & =  B_-(\alpha( M_{\beta} \cdot) ,F,{\bf{u}})
	 \\ &  = \sum_{\tau \in \mfT} \frac{\alpha(M_{\beta} \tau)}{S(\tau)} F(\tau)({\bf{u}})
	 \\ & = \sum_{\tau \in \mfT} \frac{\alpha( \tau)}{S(\tau)} F(M^{*}_{\beta}\tau)({\bf{u}}).
	\end{equs}
One uses the fact that $ M_{\beta}^* $ is a morphism for the product $ \star_2 $ (see \eqref{productM}) to get for $  \tau = X^k \prod_{i=1}^n \CI_{a_i}(\tau_i) \Xi_{\mathfrak{l}} $
\begin{equs}
	 M_{\beta}^* \tau & =  M_{\beta}^* \left( X^k \prod_{i=1}^n \CI_{a_i}(\tau_i) \star_2 \Xi_{\mathfrak{l}} \right)
	 \\ &=  \left( \hat{M}_{\beta}^* \left(  X^k \prod_{i=1}^n \CI_{a_i}( \tau_i) \right) \right) \star_2  M_{\beta}^* \Xi_{\mathfrak{l}} 
	 \\ & =  \left(  X^k \prod_{i=1}^n \CI_{a_i}( M_{\beta}^* \tau_i) \right) \star_2  M_{\beta}^* \Xi_{\mathfrak{l}} 
\end{equs}
where in the last line we have used identity \eqref{morphism_property_M}.
Then, using Proposition~\ref{star_morphism}, one gets
 for $ \tau = X^k \prod_{i=1}^n \CI_{a_i}(\tau_i) \Xi_{\mathfrak{l}} $ where $a_i = (\mathfrak{t}_i,m_i)  $
\begin{equs}
	F(M^{*}_{\beta}\tau)({\bf{u}}) & = 	F( X^k \prod_{i=1}^n \CI_{a_i}( M_{\beta}^* \tau_i) \star_2  M_{\beta}^* \Xi_{\mathfrak{l}} )({\bf{u}})
	\\ & = \Big\{\partial^k D_{a_1} ... D_{a_n} F(M_{\beta}^* \Xi_{\mathfrak{l}} )({\bf u})\Big\}\,\prod_{j=1}^n F_{\mathfrak{t}_j}(M_{\beta}^* \tau_j)({\bf u}).
\end{equs}
We apply an induction hypothesis to get
\begin{equs}
	F_{\mathfrak{t}_j}(M_{\beta}^* \tau_j)({\bf u}) = \hat{F}_{\mathfrak{t}_j}( \tau_j)({\bf u})
\end{equs}
and we know that 
\begin{equs}
	M_{\beta}^* \Xi_{0} = \sum_{\tau' \in \mfT} \frac{\beta(\tau')}{S(\tau')} \tau', \quad 	M_{\beta}^* \Xi_{\mathfrak{l}} = \Xi_{\mathfrak{l}}, \, \mathfrak{l} \neq 0.  
\end{equs}
This allows us to conclude on the fact that
\begin{equs}
		F(M^{*}_{\beta}\tau)({\bf{u}}) = \hat{F}(\tau)({\bf{u}}).
\end{equs}
	\end{proof}

One can define the root substitution of a B-series as
\begin{equs} \label{substitution_root}
	B_-(\alpha,F,{\bf{u}}) \circ_{s,r} B_-(\beta,F,{\bf{u}}) = \sum_{\tau \in \mfT} \frac{\alpha(\tau)}{S(\tau)} \tilde{F}(\tau)({\bf{u}})
\end{equs}
where for $ \tau = X^k \prod_{i=1}^n \CI_{a_i}(\tau_i) \Xi_{\mathfrak{l}} $, one has for $ \mathfrak{l} \neq 0 $
\begin{equs}
	\tilde{F}(\tau)({\bf{u}}) = 
	\prod_{i=1}^n F(\tau_i)({\bf{u}}) \,  \partial^k \prod_{i=1}^n D_{a_i} F(\Xi_{\mathfrak{l}})({\bf{u}}). \, 
\end{equs}
When $ \mathfrak{l} = 0 $, one has
\begin{equs}
	\tilde{F}(\tau)({\bf{u}}) = \sum_{\tau' \in \mfT} \frac{\beta(\tau')}{S(\tau')}
	\prod_{i=1}^n F(\tau_i)({\bf{u}}) \,  \partial^k \prod_{i=1}^n D_{a_i} F(\tau')({\bf{u}}).
\end{equs}
We also define $F_{\circ}$ for $ \tau = X^k \prod_{i=1}^n \CI_{a_i}(\tau_i) \Xi_{\mathfrak{l}} $ by
\begin{equs}
		F_{\circ}(\tau)({\bf{u}}) = 
	\prod_{i=1}^n \hat{F}(\tau_i)({\bf{u}}) \,  \partial^k \prod_{i=1}^n D_{a_i} F(\Xi_{\mathfrak{l}})({\bf{u}}).
\end{equs}
The main difference between the root substitution and the normal substitution is that in the full substitution, we change the non-linearities associated to the noises $ \Xi_0 $ everywhere whereas with the root substitution, we change only the one  associated to the root. In the next theorem, we develop key identities between root substitution, substitution and composition.

\begin{theorem} \label{thm_root_substitution}
	One has the following identities 
	\begin{equs} \label{root_composition}
			B_-(\alpha,F,{\bf{u}}) \circ_{s,r} B_-(\beta,F,{\bf{u}}) = 	B_-(\beta,F,\cdot) \circ B_+(\alpha,F,\cdot,{\bf{u}})
	\end{equs}
where $ \alpha, \beta: \CT \rightarrow \mathbb{R}  $ are such that $ \alpha $ is multiplicative for the tree product and it is supported on decorated trees of the form $ X^k \prod_{i=1}^n \CI_{a_i}(\tau_i) \Xi_{\mathfrak{0}} $ that could be identified with an element of $  \CT_+  $. 
One has also 
\begin{equs} \label{Mcirc_R}
		B_-(\beta,F_{\circ},{\bf{u}}) \circ_{s,r} B_-(\beta,F,{\bf{u}}) = B_-(\beta,F,{\bf{u}}) \circ_{s} B_-(\beta,F,{\bf{u}}).
\end{equs}
	\end{theorem}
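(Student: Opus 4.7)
My plan is to treat the two identities by essentially different means: the first follows from reducing to Theorem~\ref{main_composition}, while the second is a direct unpacking of definitions that reveals the two sides already agree term by term.

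For the identity \eqref{root_composition}, the strategy is to show that both sides can be written as the same double sum indexed by pairs $(\hat\sigma,\tau')\in \mfT_+\times\mfT$ of the form
\begin{equs}
\sum_{\hat\sigma\in\mfT_+}\sum_{\tau'\in\mfT}\frac{\alpha(\hat\sigma)\,\beta(\tau')}{S(\hat\sigma)\,S(\tau')}\,F(\hat\sigma\star_2\tau')({\bf u}).
\end{equs}
For the right-hand side this is immediate from (the proof of) Theorem~\ref{main_composition}, applied with the roles of the two characters swapped, once we use that $\alpha$, being multiplicative and supported on trees of the form $\hat\sigma\,\Xi_0$, is unambiguously identified with a character on $\CT_+$. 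For the left-hand side I would use that the support hypothesis on $\alpha$ reduces the sum to trees $\tau=\hat\tau\,\Xi_0$ with $\hat\tau=X^k\prod_i\CI_{a_i}(\tau_i)\in\mfT_+$, and that the symmetry factor $S(\hat\tau\,\Xi_0)=S(\hat\tau)$ (directly from \eqref{symmetry_factor}, since $\Xi_0$ contributes trivially). Writing out $\tilde F$ for $\mathfrak{l}=0$ gives $\tilde F(\tau)=\sum_{\tau'}\frac{\beta(\tau')}{S(\tau')}\prod_j F(\tau_j)\,\partial^k\prod_i D_{a_i}F(\tau')$, and an application of Proposition~\ref{star_morphism} collapses this into $\sum_{\tau'}\frac{\beta(\tau')}{S(\tau')}F(\hat\tau\star_2\tau')$. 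Summing against $\alpha(\hat\tau)/S(\hat\tau)$ recovers the target double sum.

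For the identity \eqref{Mcirc_R} my plan is simply to unfold both root substitutions and compare term by term in the tree expansion. On the left, for $\tau=X^k\prod_i\CI_{a_i}(\tau_i)\,\Xi_\mathfrak{l}$, the root substitution $\circ_{s,r}$ replaces the occurrence of $F(\Xi_0)$ inside the definition of $F_\circ(\tau)$ (and leaves things untouched when $\mathfrak{l}\neq 0$). Since $F_\circ(\tau)=\prod_i\hat F(\tau_i)\,\partial^k\prod_i D_{a_i}F(\Xi_\mathfrak{l})$, this yields
\begin{equs}
\widetilde{F_\circ}(\tau)=\begin{cases}\prod_i\hat F(\tau_i)\,\partial^k\prod_i D_{a_i}F(\Xi_\mathfrak{l}),&\mathfrak{l}\neq 0,\\[0.3em]
\sum_{\tau'\in\mfT}\frac{\beta(\tau')}{S(\tau')}\prod_i\hat F(\tau_i)\,\partial^k\prod_i D_{a_i}F(\tau'),&\mathfrak{l}=0.\end{cases}
\end{equs}
On the right, the full substitution $B_-(\beta,F,{\bf u})\circ_s B_-(\beta,F,{\bf u})=\sum_\tau\frac{\beta(\tau)}{S(\tau)}\hat F(\tau)$ gives $\hat F(\tau)$ by exactly the same two formulas. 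Thus $\widetilde{F_\circ}(\tau)=\hat F(\tau)$ for every $\tau$, and the identity follows after multiplying by $\beta(\tau)/S(\tau)$ and summing.

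The only genuine obstacle is in the first identity, where care is needed to match symmetry factors and to justify the identification of $\alpha$ with a character on $\CT_+$; once this is set up, Theorem~\ref{main_composition} and Proposition~\ref{star_morphism} do the work. The second identity is, in contrast, a purely bookkeeping check: the asymmetry between subtree non-linearities (handled by $\hat F$) and the root non-linearity (left as $F(\Xi_\mathfrak{l})$) built into $F_\circ$ is exactly what the outer root substitution is designed to complete into the full substitution $\hat F$.
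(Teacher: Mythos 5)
Your proposal is correct and follows essentially the same route as the paper: for \eqref{root_composition} it restricts the sum via the support of $\alpha$, applies Proposition~\ref{star_morphism} to produce $F(\hat\tau\star_2\tau')$, and then reuses the manipulations from (the proof of) Theorem~\ref{main_composition} to identify the result with the composition $B_-(\beta,F,\cdot)\circ B_+(\alpha,F,\cdot,{\bf u})$. For \eqref{Mcirc_R} you simply spell out the term-by-term identification $\widetilde{F_\circ}=\hat F$ that the paper states as following directly from the definitions of $\circ_s$ and $\circ_{s,r}$.
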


\begin{proof}
	For \eqref{root_composition}, we start by writing the definition of the root substitution \eqref{substitution_root}:
	\begin{equs}
		 B_-(\alpha,F,{\bf{u}}) & \circ_{s,r} B_-(\beta,F,{\bf{u}}) \\ & = \sum_{\tau \in \mfT} \frac{\alpha(\tau)}{S(\tau)} \tilde{F}(\tau)({\bf{u}})
		\\ & = \sum_{\tau = X^k \prod_{i=1}^n \CI_{a_i}(\tau_i) } \sum_{\tau' \in \mfT} \frac{\alpha(\tau)}{S(\tau)} \frac{\beta(\tau')}{S(\tau')}
		\prod_{i=1}^n F(\tau_i)({\bf{u}})   \partial^k \prod_{i=1}^n D_{a_i} F(\tau')({\bf{u}})
		\\ & = \sum_{\tau \in \mfT_+ } \sum_{\tau' \in \mfT} \frac{\alpha(\tau)}{S(\tau)} \frac{\beta(\tau')}{S(\tau')}
		 F(\tau \star_2 \tau')({\bf{u}})
		 \\ & =  \sum_{\tau \in \mfT} \frac{(\alpha \star_2 \beta)(\tau)}{S(\tau)}  F(\tau )({\bf{u}}) 
		 \\ &= B_-(\beta,F,\cdot) \circ B_+(\alpha,F,{\bf{u}}).
		\end{equs}
	where we have used the morphism property of $ \star_2 $ given in Proposition~\ref{star_morphism}. The last identities correspond to the same steps performed in the proof of Theorem \ref{main_composition}.
	The second identity follows directly from the definition of the products $ \circ_s $ and $ \circ_{r,s} $.
	\end{proof}

Let us explain how Theorem \ref{thm_substition} and Theorem \ref{thm_root_substitution} were used in the context of Regularity Structures. The solution of order $ \gamma $ of \eqref{main_equation} can be expressed using a B-series:
\begin{equs}
	u_{\mathfrak{t}} = B_+(\Pi_z \circ \mathcal{P}_{\leq \gamma},F, (\mathfrak{t},0), {\bf{u}}(z)).
\end{equs} 
Also the right hand side of \eqref{main_equation} can be expanded with a B-series:
\begin{equs}
	u_{\mathfrak{t}} = K_{\mathfrak{t}} * v_{\mathfrak{t}}, \quad
	v_{\mathfrak{t}} = B_-(\Pi_z \circ \mathcal{P}_{\leq 0},F_{\mathfrak{t}}, {\bf{u}}(z)).
\end{equs}
where for the well-posedness it is necessary to construct this expansion up to order $ \gamma = 0 $ for $ 	v_{\mathfrak{t}}  $. The main issue is that the iterated integrals given by $ \alpha = \Pi_z \circ \mathcal{P}_{\leq 0} $ may fail to converge when one starts with a smooth approximation of a singular noise and wants to pass to the limit. The main remedy is to renormalise them by introducing the correct counter-terms that will make these integrals converge. This is performed by using maps of the form $ M_{\beta} $. Then, one considers the renormalised B-series $ \hat{v}_{\mathfrak{t}} $ given by
\begin{equs}
	 \hat{v}_{\mathfrak{t}} = B_-(\alpha(M_\beta \cdot),F_{\mathfrak{t}}, {\bf{u}}(z)) = B_-(\alpha,F,{\bf{u}}(z)) \circ_s B_-(\beta,F,{\bf{u}}(z))
\end{equs}
where the last identity comes from Theorem \ref{thm_substition}. In general, one fails to get the analytical bounds for the term $ \alpha(M_\beta \tau) $ for $ \tau \in \CT $ which means that we do not get the correct local behaviour around the point $ z $.
One has to work with an extended space of decorated trees where extra decorations are added on the vertices to turn $ M_{\beta} $ into a degree-preserving map satisfying 
\begin{equs}
	M_{\beta} \tau = \tau + \sum_i c_i \tau_i, \quad \deg(\tau_i) = \deg(\tau).
\end{equs}
 This has been the strategy developed in \cite{BHZ} and it is used extensively for deriving the renormalised equation in \cite{BCCH}. In fact, one can get around this by using preparation maps that are right morphisms for the $ \star_2 $ product:
 \begin{equs}
 	R (\sigma \star_2 \tau) = 	\sigma \star_2 R \tau.
 	\end{equs}
 One main example are the maps of the form $ R = R_{\beta} $.
  The renormalised iterated integrals $\Pi^R_z, \Pi^{R,\times}_z $ are defined  recursively by
  \begin{equs} \label{def_Pi} \begin{aligned}
  	(\Pi_z^{R}  \tau)(z')  & = (\Pi_z^{R,\times} R \tau)(z'), \quad (\Pi_z^{R,\times}  \tau	\bar{\tau} )(z') = (\Pi_z^{R,\times}  \tau	 )(z') (\Pi_z^{R,\times}	\bar{\tau} )(z'),
  	\\
  	(\Pi_z^{R,\times} X^k)(z')  & = (z'-z)^k, \quad (\Pi_z^{R,\times} 	\xi_{\mathfrak{l}} )(z') = \xi_{\mathfrak{l}}(z'),
  	\\ (\Pi^{R,\times}_z \CI_{(\mathfrak{t},m)}(\tau))(z')  & = (D^{m} K_{\mathfrak{t}} * \Pi^{R}_z \tau)(z') \\ & - \sum_{|k|_{\s} \leq \deg(\CI_{(\mathfrak{t},m)}(\tau))} \frac{(z'-z)^k}{k!} (D^{m +k} K_{\mathfrak{t}} * \Pi^{R}_z \tau)(z).
  	\end{aligned}
  \end{equs}
The main point of this definition is to be valid without using extended decorations and it makes appear a multiplicative map $ \Pi_z^{R,\times} $ for the tree product. Then, one considers the following  character and its renormalisation via $ R_{\beta} $
\begin{equs}
	\tilde{\alpha} = \Pi_z^{R,\times}, \quad 	\tilde{\alpha}(R_{\beta} \cdot) = \Pi_z^{R}
\end{equs}
then by using \eqref{root_composition}, one gets
\begin{equs}
	B_-(\tilde{\alpha}( R_{\beta} \cdot) ,F,{\bf{u}}(z))  
	& =	B_-(\beta \star_2 \tilde{\alpha} ,F,{\bf{u}}(z))
	 \\ &  = B_-(\beta,F,\cdot) \circ B_+(\tilde{\alpha},F,\cdot,{\bf{u}}(z)) 
\end{equs}
The character $ \tilde{\alpha} $ contains some renormalisation but now it is multiplicative for the tree product. This simple remark is the reason why one can perform a simple proof in \cite{BB21}.


\begin{thebibliography}{Cha10}
\expandafter\ifx\csname url\endcsname\relax
  \def\url#1{\texttt{#1}}\fi
\expandafter\ifx\csname urlprefix\endcsname\relax\def\urlprefix{URL }\fi
\expandafter\ifx\csname href\endcsname\relax
  \def\href#1#2{#2}\fi
\expandafter\ifx\csname burlalt\endcsname\relax
  \def\burlalt#1#2{\href{#2}{\texttt{#1}}}\fi

  
 
 
 
 \bibitem{ABMS23}
Y. Alama Bronsard, Y. Bruned, G. Maierhofer, K. Schratz
 \newblock { \em Symmetric resonance based integrators and forest formulae}. 
 \burlalt{arXiv:2305.16737}{http://arxiv.org/abs/2305.16737}.
 
  \bibitem{BB19}
 I. Bailleul, I. Bernicot. \newblock { \em High order paracontrolled calculus}. Forum of Mathematics, Sigma, 7, E44. 
 \newblock \burlalt{doi:10.1017/fms.2019.44}{https://doi.org/10.1017/fms.2019.44}.
  
 
    \bibitem{BB21}
I.~Bailleul, Y.~{Bruned}.
\newblock { \em Renormalised singular stochastic PDEs}. 
 \burlalt{arXiv:2101.11949}{http://arxiv.org/abs/2101.11949}. 
 
 \bibitem{BB21b}
I.~Bailleul, Y.~{Bruned}.
\newblock { \em  Locality for singular stochastic PDEs}. 
 \burlalt{arXiv:2109.00399}{https://arxiv.org/abs/2109.00399}.

  
  \bibitem{BCCH}
 { \rm Y. Bruned, A. Chandra, I. Chevyrev,
  M. Hairer}.
\newblock {\em Renormalising SPDEs in regularity structures}.
\newblock J. Eur. Math. Soc. (JEMS), \textbf{23}, no.~3, (2021), 869--947.
\newblock
  \burlalt{doi:10.4171/JEMS/1025}{http://dx.doi.org/10.4171/JEMS/1025}.

  
 



\bibitem{BEFH24}
Y.~Bruned, K.~Ebrahimi-Fard, Y.~Hou
\newblock {\em Multi-indice B-series
}
\newblock \burlalt{arXiv:2402.13971 }{http://arxiv.org/abs/2402.13971}.


\bibitem{BaiHos}
I.~{Bailleul}, M.~{Hoshino}.
\newblock {\em A tourist's guide to regularity structures.}
\newblock \burlalt{arXiv:2006.03524}{https://arxiv.org/abs/2006.03524}.

\bibitem{BH21}
I.~Bailleul, M.~Hoshino.
\newblock {\em Paracontrolled calculus and regularity structures (1)}. J. Math. Soc. Japan. \textbf{73}, no.~2, (2021), 553--595.
\newblock 
\burlalt{doi:10.2969/jmsj/81878187}{http://dx.doi.org/10.2969/jmsj/81878187}. 

\bibitem{BH21b}
I.~Bailleul, M.~Hoshino.
\newblock {\em Paracontrolled calculus and regularity structures (2)}. Journal de l'Ecole Polytechnique \textbf{8}, (2021), 1275--1328.
\newblock 
\burlalt{doi:10.2969/10.5802/jep.172}{http://dx.doi.org/10.5802/jep.172}. 

\bibitem{BGHZ}
{ \rm Y. Bruned, F. Gabriel, M. Hairer, 
	L. Zambotti},
\newblock   {\em Geometric stochastic heat equations}.
\\
\newblock J. Amer. Math. Soc. (JAMS) \textbf{35}, no.~1, (2022), 1-80. 
\burlalt{doi:10.1090/jams/977}{http://dx.doi.org/10.1090/jams/977}.

\bibitem{BHZ}
{\rm Y. Bruned, M. Hairer, L. Zambotti}.
\newblock {\em Algebraic renormalisation of regularity structures.}
\newblock Invent. Math. \textbf{215}, no.~3, (2019), 1039--1156.
\newblock
  \burlalt{doi:10.1007/s00222-018-0841-x}{https://dx.doi.org/10.1007/s00222-018-0841-x}.
  
  \bibitem{BK}
  Y.~Bruned, F.~Katsetsiadis.
  \newblock {\em Post-Lie algebras in Regularity Structures}.  Forum of Mathematics, Sigma \textbf{11}, e98, (2023), 1--20.
  \newblock 
  \burlalt{doi:10.1017/fms.2023.93}{http://dx.doi.org/10.1017/fms.2023.93}. 
  
 
 \bibitem{BL23}
 Y.~{Bruned}, P.~{Linares}.
 \newblock {\em  A top-down approach to algebraic renormalization in regularity structures based on multi-indices.}
 \newblock To appear in Arch. Ration. Mech. Anal. \burlalt{arXiv:2307.03036}{https://arxiv.org/abs/2307.03036}.
 
  
   \bibitem{BM22}
Y.~Bruned, D.~Manchon.
\newblock {\em Algebraic deformation for (S)PDEs}. J. Math. Soc. Japan. \textbf{75}, no.~2, (2023), 485--526.
\newblock 
  \burlalt{doi:10.2969/jmsj/88028802}{http://dx.doi.org/10.2969/jmsj/88028802}. 
  
  \bibitem{BP}
  N.~N. Bogoliubow, O.~S. Parasiuk.
  \newblock \emph{\"{U}ber die {M}ultiplikation der {K}ausalfunktionen in der
  	{Q}uantentheorie der {F}elder}.
  \newblock Acta Math. \textbf{97}, (1957), 227--266.
  \newblock
  \burlalt{doi:10.1007/BF02392399}{http://dx.doi.org/10.1007/BF02392399}.

\bibitem{BR18} 
{ \rm Y. Bruned}.
\newblock {\em Recursive formulae in regularity structures.}
\newblock Stoch. Partial Differ. Equ. Anal. and Comput. \textbf{6},
  no.~4, (2018), 525--564.
\newblock 
  \burlalt{doi:10.1007/s40072-018-0115-z}{http://dx.doi.org/10.1007/s40072-018-0115-z}. 
  
\bibitem{BS}
Y.~{Bruned}, K.~{Schratz}. \newblock { \em Resonance based schemes for dispersive equations via decorated 
        trees}. Forum of Mathematics, Pi, 10, E2. 
\newblock \burlalt{doi:10.1017/fmp.2021.13}{https://doi.org/10.1017/fmp.2021.13}.





  
  \bibitem{Butcher72}
{ \rm J. C. Butcher}.
\newblock {\em An algebraic theory of integration methods.}
\newblock Math. Comp. \textbf{26}, (1972), 79--106.
\newblock \burlalt{doi:10.2307/2004720}{http://dx.doi.org/10.2307/2004720}.
  
  \bibitem{CCHS22}
  A. ~Chandra, I. ~Chevyrev, M. ~Hairer, H. ~Shen.
  \newblock {\em Langevin dynamic for the 2D Yang-Mills measure}.
  Publ. math. IHES \textbf{136}, (2022), 1--147.
  \newblock \burlalt{doi:10.1007/s10240-022-00132-0}{https://doi.org/10.1007/s10240-022-00132-0}.
  
   \bibitem{CCHS23}
  A. ~Chandra, I. ~Chevyrev, M. ~Hairer, H. ~Shen.
  \newblock {\em  Stochastic quantisation of Yang-Mills-Higgs in 3D}.
\burlalt{arXiv:2201.03487}{https://arxiv.org/abs/2201.03487}.
  
  
  
 
  
  
\bibitem{CEM}
D.~Calaque, K.~Ebrahimi-Fard, D.~Manchon.
\newblock { \em Two interacting {H}opf algebras of trees: a {H}opf-algebraic approach
  to composition and substitution of {B}-series.}
\newblock Adv. in Appl. Math. \textbf{47}, no.~2, (2011), 282--308.
\newblock 
  \burlalt{doi:10.1016/j.aam.2009.08.003}{http://dx.doi.org/10.1016/j.aam.2009.08.003}.  
  
  
 
  \bibitem{CH04}
  F.~Chapoton.
  \newblock {\textsl{ Rooted trees and an exponential-like series.}}
  \newblock \burlalt{arXiv:0209104 }{http://arxiv.org/abs/0209104 }.

 \bibitem{CH16}
A.~Chandra, M.~Hairer.
\newblock {\textsl{An analytic {BPHZ} theorem for regularity structures.}}
\newblock \burlalt{arXiv:1612.08138}{http://arxiv.org/abs/1612.08138}.

 \bibitem{CHV05}
 P.~{Chartier}, E.~{Hairer},  G.~{Vilmart}.
 \newblock {\em A substitution law for B-series vector
 	fields.}
 \newblock INRIA Report, no. 5498, (2005).
 \newblock
 \burlalt{inria-00070509}{https://inria.hal.science/inria-00070509}.

\bibitem{CHV07}
P.~{Chartier}, E.~{Hairer},  G.~{Vilmart}.
\newblock {\em Numerical integrators based on modified
	differential equations.}
\newblock Math. Comp. \textbf{76}, no.~260, (2007), 1941--1953.
\newblock
\burlalt{doi:10.1090/S0025-5718-07-01967-9}{http://dx.doi.org/10.1090/S0025-5718-07-01967-9}.
 

\bibitem{CHV10}
P.~{Chartier}, E.~{Hairer},  G.~{Vilmart}.
\newblock {\em Algebraic structures of {B}-series.}
\newblock  Found. Comput. Math. \textbf{10}, no.~4, (2010), 407--427.
\newblock
  \burlalt{doi:10.1007/s10208-010-9065-1}{http://dx.doi.org/10.1007/s10208-010-9065-1}.


\bibitem{CK1}
A.~Connes, D.~Kreimer.
\newblock { \em Hopf algebras, renormalization and noncommutative geometry.}
\newblock Comm. Math. Phys. \textbf{199}, no.~1, (1998), 203--242.
\newblock
  \burlalt{doi:10.1007/s002200050499}{http://dx.doi.org/10.1007/s002200050499}.

\bibitem{CK2}
A.~Connes, D.~Kreimer.
\newblock { \em Renormalization in quantum field theory and the {R}iemann-{H}ilbert
  problem {I}: the {H}opf algebra structure of graphs and the main theorem.}
\newblock  Comm. Math. Phys. \textbf{210}, (2000), 249--73.
\newblock
  \burlalt{doi:10.1007/s002200050779}{http://dx.doi.org/10.1007/s002200050779}.
  
  \bibitem{ChaLiv}
    	F.~Chapoton, M.~Livernet,
    	{\textsl{Pre-Lie algebras and the rooted trees operad}},
    Internat.~Math.~Res.~Notices  \textbf{2001}, no.~8, (2001), 395--408.
     \burlalt{doi:10.1155/S1073792801000198}{https://doi.org/10.1155/S1073792801000198}. 
    
 \bibitem{ELM}
  {\rm K.~Ebrahimi-Fard, A.~Lundervold, H.~Munthe-Kaas}.
\newblock {\em On the {L}ie enveloping algebra of a post-{L}ie algebra.}
\newblock J. Lie Theory \textbf{25}, no.~4, (2015), 1139--1165.

  
  
 
\bibitem{FrizHai}
P.~K. {Friz}, M.~{Hairer}.
\newblock \emph{{A Course on Rough Paths}}.
\newblock Springer International Publishing, 2020.
\newblock
  \burlalt{doi:10.1007/978-3-030-41556-3}{https://dx.doi.org/10.1007/978-3-030-41556-3}.


 
    
    \bibitem{GIP}
    M. Gubinelli, P. Imkeller,
    N. Perkowski. \newblock { \em Paracontrolled distributions and singular PDEs}. Forum of Mathematics, Pi, 3, E6. 
    \newblock \burlalt{doi:10.1017/fmp.2015.2}{https://doi.org/10.1017/fmp.2015.2}.
    
    
\bibitem{Guin1}
D. Guin,  J. M. Oudom, \emph{Sur l'alg\`ebre enveloppante d'une
  alg\`ebre pr\'{e}-{L}ie}, C. R. Math. Acad. Sci. Paris \textbf{340} (2005),
  no.~5, 331--336.
  \burlalt{doi:10.1016/j.crma.2005.01.010}{https://doi.org/10.1016/j.crma.2005.01.010}. 
  

\bibitem{Guin2}
D. Guin,  J. M. Oudom, \emph{On the {L}ie enveloping algebra of a pre-{L}ie algebra}, J.
  K-Theory \textbf{2} (2008), no.~1, 147--167.
  \burlalt{doi:10.1017/is008001011jkt037}{https://doi.org/10.1017/is008001011jkt037}. 
  
  \bibitem{Gubinelli2004}
  M.~Gubinelli.
  \newblock \emph{Controlling rough paths}.
  \newblock \emph{J. Funct. Anal.}, \textbf{216}, no.~1, (2004), 86
  -- 140.
  \newblock
  \burlalt{doi:10.1016/j.jfa.2004.01.002}{https://dx.doi.org/10.1016/j.jfa.2004.01.002}.
  
  \bibitem{Gub06}
  M.~Gubinelli.
  \newblock \emph{Ramification of rough paths}.
  \newblock \emph{J. Differ. Equ.} \textbf{248}, no.~4, (2010), 693 -- 721.
  \newblock
  \burlalt{doi:10.1016/j.jde.2009.11.015}{https://dx.doi.org/10.1016/j.jde.2009.11.015}.
  
\bibitem{reg}
{\rm M. Hairer}.
\newblock {\em A theory of regularity structures.}
\newblock Invent. Math. \textbf{198}, no.~2, (2014), 269--504.
\newblock
  \burlalt{doi:10.1007/s00222-014-0505-4}{https://dx.doi.org/10.1007/s00222-014-0505-4}.
  
  \bibitem{Hepp}
  K.~Hepp.
  \newblock \emph{On the equivalence of additive and analytic renormalization}.
  \newblock Comm. Math. Phys. \textbf{14}, (1969), 67--69.
  \newblock
  \burlalt{doi:10.1007/BF01645456}{http://dx.doi.org/10.1007/BF01645456}.
  
\bibitem{HLW}
{\rm E. Hairer, C. Lubich, G. Wanner},
{\em Geometric Numerical Integration. Structure-Preserving Algorithms for Ordinary Differential Equations.} Second edition, Springer, Berlin 2006.  \burlalt{doi:10.1007/3-540-30666-8}{https://dx.doi.org/10.1007/3-540-30666-8}.





\bibitem{HS23}
M.~{Hairer}, H.~{Singh}.
\newblock {\em  Regularity Structures on Manifolds and Vector Bundles.}
\newblock \burlalt{arXiv:2308.05049}{https://arxiv.org/abs/2308.05049}.

\bibitem{HW74}
{\rm E. Hairer, G. Wanner},
{\em On the Butcher group and general multi-value methods.} Computing, \textbf{13}, (1974), 1--15.  \burlalt{doi:10.1007/BF02268387}{https://dx.doi.org/10.1007/BF02268387}.



 



   \bibitem{L23}
  P.~Linares.
  \newblock {\em Insertion pre-Lie products and translation of rough paths based on multi-indices}. 
  \newblock \burlalt{arXiv:2307.06769
  }{https://arxiv.org/abs/2307.06769}.
  
 
 \bibitem{LOT}
P.~Linares, F.~Otto, M.~Tempelmayr.
\newblock {\em The structure group for quasi-linear equations via universal enveloping algebras}. Comm. Amer. Math.  \textbf{3}, (2023), 1--64.  
 \burlalt{doi:10.1090/cams/16}{https://dx.doi.org/10.1090/cams/16}.
 

  	\bibitem{LOTT}
  {\rm P.~Linares, F.~Otto, M.~Tempelmayr, P.~Tsatsoulis}
  \newblock {\em A diagram-free approach to the stochastic estimates in regularity structures.} Invent. Math. \textbf{237}, (2024), 1469--1565.
  \burlalt{doi:10.1007/s00222-024-01275-z}{https://dx.doi.org/10.1007/s00222-024-01275-z}.

\bibitem{Lyons98}
T.~J. Lyons.
\newblock \emph{Differential equations driven by rough signals}.
\newblock Rev. Mat. Iberoamericana, \textbf{14}, no.~2, (1998), 215--310.
\newblock \burlalt{doi:10.4171/RMI/240}{https://dx.doi.org/10.4171/RMI/240}.







\bibitem{ML08}
{\rm H.~Munthe-Kaas,  W.~Wright}.
\newblock {\em On the Hopf algebraic structure of Lie group integrators.}
\newblock Found. Comput. Math. \textbf{8}, (2008), 227--257.
\newblock
  \burlalt{doi:10.1007/s10208-006-0222-5}{https://dx.doi.org/10.1007/s10208-006-0222-5}
  
\bibitem{ML13}
{\rm H.~Munthe-Kaas, A.~Lundervold}.
\newblock {\em On post-Lie algebras, Lie Butcher series and Moving Frames.}
\newblock Found. Comput. Math. \textbf{13}, no.~4, (2013), 583--613.
\newblock
  \burlalt{doi:10.1007/s10208-013-9167-7}{https://dx.doi.org/10.1007/s10208-013-9167-7}.

\bibitem{OSSW}
 F.~Otto, J.~Sauer, S.~Smith, H.~Weber.
\newblock {\em A priori bounds for quasi-linear SPDEs in the full sub-critical regime}. 
\newblock \burlalt{arXiv:2103.11039
}{https://arxiv.org/abs/2103.11039}.


\bibitem{Rahm1}
L.~Rahm. \newblock { \em An operadic approach to substitution in Lie–Butcher series}. Forum of Mathematics, Sigma (2022), 10, E20. 
\newblock \burlalt{doi:10.1017/fms.2022.12}{https://doi.org/doi:10.1017/fms.2022.12}.

\bibitem{Rahm2}
L.~Rahm.
\newblock {\em Planar Regularity Structures}. 
\newblock \burlalt{arXiv:2212.04856
}{https://arxiv.org/abs/2212.04856}.



\bibitem{Val}
{\rm B. Vallette}.
\newblock {\em Homology of generalized partition posets.}
\newblock J. Pure Appl. Algebra, \textbf{208},  no.~2, (2007), 699--725.


 \bibitem{Zim}
W.~Zimmermann.
\newblock {\em Convergence of {B}ogoliubov's method of renormalization in momentum
	space.}
\newblock Comm. Math. Phys. \textbf{15}, (1969), 208--234.
\newblock
\burlalt{doi:10.1007/BF01645676}{http://dx.doi.org/10.1007/BF01645676}.



\end{thebibliography}
\end{document}